\tikzset{mycolor/.style = {line width=1bp,color=#1}}%
\tikzset{myfillcolor/.style = {draw,fill=#1}}%
\newtheorem{thrm}{Theorem}
\newtheorem{conj}{Conjecture}
\newtheorem{prop}{Proposition}
\newtheorem{obs}{Observation}
\newtheorem{cor}{Corollary}
\newtheorem{quest}{Question}{\bfseries}{\rmfamily}
\newtheorem{definition}{Definition}
\newcommand{\gen}[1]{\ensuremath{\langle #1\rangle}}
\title{Cutting Planes for Families Implying Frankl's Conjecture}
\author[1,2]{Jonad Pulaj \thanks{The work for this article has been (partly) conducted within the Research Campus MODAL funded by the German Federal Ministry of Education and Research (BMBF grant number 05M14ZAM).}}
\affil[1]{Department of Computer Science, Mathematics and Physics, Faculty of Science and Technology, The University of the West Indies, Cave Hill, St. Michael, Barbados}
\affil[2]{Dept. of Mathematical Optimization, Zuse Institute Berlin (ZIB)
Takustr. 7, 14195 Berlin, Germany}
\date{}                     
\affil[ ]{\tt jonad.pulaj@cavehill.uwi.edu}
\begin{document}
\maketitle
\begin{abstract}
We find previously unknown families of sets which ensure Frankl's conjecture  holds for all families that contain them using an algorithmic framework. The conjecture states that for any nonempty union-closed (UC) family there exists an element of the ground set in at least half the sets of the considered UC family. Poonen's Theorem characterizes the existence of weights which determine whether a given UC family implies the conjecture for all UC families which contain it. We design a cutting-plane method that computes the explicit weights which imply the existence conditions of Poonen's Theorem. This method enables us to answer several open questions regarding structural properties of UC families, including the construction of a counterexample to a conjecture of Morris from 2006.
\end{abstract}
{\bf Keywords:} Frankl’s conjecture, union-closed families, integer programming,
cutting-plane method, extremal combinatorics.
\section{Introduction}
Frankl's (union-closed sets) conjecture is a celebrated unsolved problem in combinatorics that was recently brought to the attention of a wider audience as a polymath project led by Timothy Gowers \cite{Gowers}. A nonempty finite family of distinct finite sets $\mathcal{F}$ is union-closed (UC) if and only if for every $A,B \in \mathcal{F}$ it follows that $A \cup B \in \mathcal{F}$. Frankl's conjecture states that for any UC family $\mathcal{F} \neq \left\{\emptyset\right\}$ there exists an element in the union of sets of $\mathcal{F}$ that is present in at least half the sets of $\mathcal{F}$. The problem appears to have little structure\textemdash perhaps the very reason why a proof or disproof remains elusive. 

In this paper we focus on a well-established method employed to attack the problem referred to as \emph{local configurations} in Bruhn and Schaudt \cite{Survey}, namely UC families that imply the conjecture for all UC families which contain them. In other words, these particular UC families always have an element in their sets that is frequent enough to imply the conjecture for all UC families that contain them. In this regard, given a UC family $\mathcal{A}$, Poonen's Theorem  \cite{Poonen} characterizes the necessity of the implication by the existence of weights on the elements of $\mathcal{A}$ that obey certain inequalities. Following Vaughan \cite{Vaughan1}, we say that a UC family of sets $\mathcal{A}$ with a largest (cardinality-wise) set $A$ is \emph{Frankl-Complete} (FC), if and only if for every UC family $\mathcal{F} \supseteq \mathcal{A}$ there exists $i \in A$ that is contained in at least half the sets of $\mathcal{F}$. A UC family $\mathcal{A}$ with a largest (cardinality-wise) set $A$ is \emph{Non\textendash Frankl-Complete} (Non\textendash FC), if and only if there exists a UC family $\mathcal{F} \supseteq \mathcal{A}$ such that each $i \in A$ is in less than half the sets of $\mathcal{F}$. Non\textendash FC-families are particularly useful in characterizing \emph{minimal} FC-families, i.e., FC-families that do not contain smaller FC-families, and also other objects of interests defined in Morris \cite{Morris}, which help shed light into structural properties of the conjecture. In addition, Non\textendash FC-families yield natural candidates for possible counterexamples. 

However, on a more positive note, the pressing relevance of FC and Non\textendash FC-families is evident in existing literature: These objects are at the heart of arguments that yield improved bounds for the problem, as seen in Poonen \cite{Poonen}, Gao and Yu \cite{Gao}, Morris \cite{Morris}, Markovi\'c \cite{Mark}, Bo\v snjak and Markovi\'c \cite{11case}, and finally Vu\v{c}kovi\'c and \v{Z}ivkovi\'c \cite{12case} which features the current best bound of $n \leq 12$, where $n$ is the cardinality of the largest set in a UC family. In other words, Frankl's conjecture holds for all UC families whose largest set has at most twelve elements. Furthermore, FC-families are used in Bruhn et al. \cite{Bruhn} to prove that Frankl's conjecture holds for subcubic bipartite graphs. Therefore characterizing a considerable number of previously unknown FC and Non\textendash FC-families\textemdash the fundamental contribution of this work which consequently helps settle several open questions of interest\textemdash is a clear step toward a better understanding of Frankl's conjecture. 

Characterizing \emph{exactly} which UC families are FC and Non\textendash FC is surprisingly difficult, as evinced by the relative dearth of known FC-families despite the past twenty-five years of research on the matter. For a positive integer $r$ an $r$-set (or $r$-subset) is a set (or subset) of cardinality $r$. Previous researchers use special structures and stronger than necessary conditions to determine a number of FC-families. In particular, Poonen \cite{Poonen} proves that any UC family which contains three 3-subsets of a 4-set satisfies the conjecture. Vaughan \cite{Vaughan1}, \cite{Vaughan2}, \cite{Vaughan3} proves that the conjecture holds for any UC family which contains a 5-set and all of its 4-subsets, or ten of the 4-subsets of a 6-set, or three 3-subsets of a 7-set with a common element. Furthermore, using a heuristic procedure implemented in a computer algebra system, Vaughan identifies potential weight systems for candidate FC-families and then proves through tedious and technical case analysis that a few more UC families are FC. Still, several FC-families Vaughan discovers are not minimal, in the sense that they contain smaller FC-families as shown by subsequent research or results in this paper. Morris \cite{Morris} is able to characterize new FC-families on six elements and with the help of a computer program exactly characterizes all minimal FC-families on 5 elements. 

Given a family of sets $\mathcal{S}$, we say that $\mathcal{S}$ \emph{generates} (or is a \emph{generator} of) $\mathcal{F}$, denoted by $\gen{\mathcal{S}}:=\mathcal{F}$, if and only if $\mathcal{F}$ is a UC family that contains $\mathcal{S}$, and there exists no UC family $\widetilde{\mathcal{F}} \subset \mathcal{F}$ such that $\mathcal{S} \subseteq \widetilde{\mathcal{F}}$. A generator of a UC family $\mathcal{F}$ is \emph{minimal} if it does not contain a smaller generator of $\mathcal{F}$. Johnson and Vaughan \cite{Unique} show that each UC family has a unique \emph{minimal} generator. In this paper, we are mainly interested in minimal generators of minimal FC-families. Hence from now on, to improve readability, we simply refer to \emph{minimal} generators of FC-families. In order to facilitate the combinatorial analysis of FC-families, Morris \cite{Morris} introduces the following notion. Let $FC(k,n)$ denote the smallest $m$ such that any $m$ of the $k$-sets in $\left\{1,2, \ldots,n\right\}$ generate an FC-family. As proven in Gao and Yu \cite{Gao}, $FC(k,n)$ is always defined for sufficiently large $n$ in relation to $k$. Consequently, Morris \cite{Morris} shows that $FC(3,5)=3$, $FC(4,5)=5$, $FC(3,6)=4$, $7 \leq FC(4,6) \leq 8$, $FC(3,7)\leq 6$ and $FC(4,7) \leq 18$. Such characterizations further facilitate the search for better bounds (or possible counterexamples). Finally, Mari\'c, \v Zivkovi\'c, and Vu\v ckovi\'c \cite{Serbs} formalize a combinatorial search in the interactive theorem prover Isabelle/HOL and show that all families containing four 3-subsets of a 7-set are
FC-families. Although not explicitly mentioned in their paper, their result implies that $FC(3,7)=4$ by the lower bound on the number of 3-sets of Morris \cite{Morris}. In summary, previous research has yielded less than \emph{two dozen} exact characterizations of \emph{minimal} generators of FC-families, with roughly a dozen more characterizations of general FC-families. In light of the above, our main contributions in this paper are the following:
\begin{itemize}
\item We design a general computational framework that is able to precisely characterize FC or Non\textendash FC-families by using exact integer programming and other redundant verification routines, thus providing an algorithmic road-map for settling open questions in Morris \cite{Morris} and Vaughan \cite{Vaughan2}, \cite{Vaughan3}. 
\vspace{0.1cm}
\item In particular we construct an explicit counterexample to a conjecture of Morris~\cite{Morris} about the structure of generators for Non\textendash FC-families. Furthermore we answer in the negative two related questions of
Vaughan~\cite{Vaughan2} and Morris~\cite{Morris} regarding a simplified method for proving the
existence of weights that yield FC-families.
\vspace{0.1cm}
\item In the Appendix we feature over one hundred previously unknown minimal nonisomorphic (under permutations of the ground set) generators of FC-families. We find the first known exact characterizations of minimal generators of FC-families on $8 \leq n \leq 10$.

\end{itemize}
The connection between Frankl’s conjecture and mathematical programming
is well-established in Pulaj, Raymond and Theis~\cite{Pulaj}, where the authors derive
the equivalence of the problem with an integer program and investigate related
conjectures. Furthermore, given an UC family A, Poonen’s Theorem yields a
constructive proof to determine if A is FC or Non–FC in the form of a fractional
polytope with a potentially exponential number of constraints. In general, this
makes it difficult to explicitly state the conditions which determine whether a
given UC family is FC. To overcome this, we design a cutting-plane method
that computes the explicit weights which imply Poonen’s existence conditions.
In particular, this paves the way toward automated discovery of FC-families
by computational integer programming, especially when coupled (as we do in this work) with an exact
rational solver~\cite{Exact} and other verification routines such as the recent work of
Cheung, Gleixner and Steffy~\cite{gleixner2015exact}. Our current implementation\footnote{Final computations are rechecked with CPLEX 12.6.3 \cite{CPLEX}, Gurobi 6.5.2 \cite{Gurobi}, and exact SCIP \cite{Exact}. For $n \geq 8$, we use CPLEX 12.6.3 \cite{CPLEX}, then recheck the results with the rest of the solvers. In addition, the branch and bound tree of exact SCIP \cite{Exact} is verified with VIPR \cite{VIPR}. Our implementation is freely available at \url{https://github.com/JoniPulaj/cutting-planes-UC-families}} in SCIP 3.2.1~\cite{SCIP}
allows us to characterize any FC-family up to 10 elements tested so far.

\section{Poonen's Theorem}
In this paper we are only interested in finite families of finite sets, which we will simply refer to as families of sets. First we will need the following definitions. For two families of sets $ \mathcal{A}$ and $ \mathcal{B}$, let $ \mathcal{A} \uplus \mathcal{B} := \left\{A \cup B \ | \ A \in \mathcal{A}, B \in \mathcal{B} \right\}$. Let $[n]:=\left\{1,2,\ldots,n\right\}$ and let $\mathcal{P}([n])$ denote the power set of $[n]$. Let $\mathcal{F}$ be a family of sets and denote by $U(\mathcal{F})$ the union of all sets in $\mathcal{F}$. For $i \in U(\mathcal{F})$ define $\mathcal{F}_i := \left\{ F \in \mathcal{F} \ | \ i \in F \right\}$.
Poonen's theorem~\cite{Poonen} is central to all approaches for classifying FC-families. In the following to simplify notation  we assume w.l.o.g. that $U(\mathcal{A})=[n]$. 
\begin{thrm}[Poonen 1992] \label{Poonen}
Let $\mathcal{A}$ be a UC family such that $\emptyset \in \mathcal{A}$. The following statements are equivalent:
\vspace{3mm}
 \begin{enumerate}
   \item For every UC family $ \mathcal{F} \supseteq \mathcal{A}$, there exists $i \in [n]$ such that $  |\mathcal{F}_i| \geq |\mathcal{F}|/2$.
\\
 \item There exist nonnegative real numbers $c_1, \ldots , c_n$ with $\sum_{i \in [n]}c_i =1$ such that  for  every UC family $ \mathcal{B} \subseteq \mathcal{P}([n])$ with $\mathcal{B} \uplus  \mathcal{A} = \mathcal{B}$, the following inequality holds

\begin{equation}
\sum_{i\in [n]}c_i|\mathcal{B}_i| \geq |\mathcal{B}|/2 .  \label{poon}
\end{equation}
  \end{enumerate}
\end{thrm}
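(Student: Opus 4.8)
The plan is to prove the two implications separately. The direction $(2)\Rightarrow(1)$ is the softer one and rests on \emph{fibering} an arbitrary extension over its extra ground-set elements; the direction $(1)\Rightarrow(2)$ is where the real work lies and combines an LP-duality (minimax) argument with an explicit union-closed gluing construction, which I expect to be the main obstacle.

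For $(2)\Rightarrow(1)$, fix weights $c_1,\dots,c_n$ as in (2) and let $\mathcal{F}\supseteq\mathcal{A}$ be any UC family. The point to keep in mind is that $U(\mathcal{F})$ may strictly contain $[n]=U(\mathcal{A})$, so $\mathcal{F}$ itself need not lie in $\mathcal{P}([n])$ and the inequality \eqref{poon} does not apply to it directly. For each $T\subseteq U(\mathcal{F})\setminus[n]$ I would set $\mathcal{B}^{T}:=\{\,F\cap[n] : F\in\mathcal{F},\ F\setminus[n]=T\,\}$. A direct check shows each $\mathcal{B}^{T}$ is a UC subfamily of $\mathcal{P}([n])$ with $\mathcal{B}^{T}\uplus\mathcal{A}=\mathcal{B}^{T}$, using $\mathcal{A}\subseteq\mathcal{F}$, the fact that every $A\in\mathcal{A}$ satisfies $A\subseteq[n]$, and that $\mathcal{F}$ is union-closed. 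Hence \eqref{poon} applies to every $\mathcal{B}^{T}$. Since the sets of $\mathcal{F}$ are partitioned by their trace $T$ on the extra elements, $|\mathcal{F}|=\sum_{T}|\mathcal{B}^{T}|$ and $|\mathcal{F}_i|=\sum_{T}|\mathcal{B}^{T}_i|$ for each $i\in[n]$. Summing \eqref{poon} over all $T$ gives $\sum_{i\in[n]}c_i|\mathcal{F}_i|\ge|\mathcal{F}|/2$, and because $c$ is a probability vector, $\max_{i\in[n]}|\mathcal{F}_i|\ge\sum_{i}c_i|\mathcal{F}_i|\ge|\mathcal{F}|/2$, which is (1).

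For $(1)\Rightarrow(2)$ I would argue by contraposition. To every UC family $\mathcal{B}\subseteq\mathcal{P}([n])$ with $\mathcal{B}\uplus\mathcal{A}=\mathcal{B}$ associate the vector $v(\mathcal{B})\in\mathbb{R}^n$ with $v(\mathcal{B})_i:=|\mathcal{B}_i|-|\mathcal{B}|/2$; there are only finitely many such $\mathcal{B}$. Statement (2) asks exactly for a point $c$ of the probability simplex $\Delta_n$ with $\langle c,v(\mathcal{B})\rangle\ge0$ for all $\mathcal{B}$. By the minimax theorem applied to the bilinear payoff $\langle c,v\rangle$ over the compact convex set $\Delta_n$ and the finite point set $\{v(\mathcal{B})\}$, one has $\max_{c\in\Delta_n}\min_{\mathcal{B}}\langle c,v(\mathcal{B})\rangle=\min_{x\in\mathrm{conv}\{v(\mathcal{B})\}}\max_i x_i$, so such $c$ fails to exist precisely when $\mathrm{conv}\{v(\mathcal{B})\}$ contains a point all of whose coordinates are strictly negative. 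Since the $v(\mathcal{B})$ are rational and the strictly negative orthant is open, this point may be taken rational; clearing denominators yields UC families $\mathcal{B}^{(1)},\dots,\mathcal{B}^{(k)}$ (each fixed by $\uplus\mathcal{A}$) and positive integers $p_1,\dots,p_k$ with $\sum_{j}p_j\bigl(2|\mathcal{B}^{(j)}_i|-|\mathcal{B}^{(j)}|\bigr)<0$ for every $i\in[n]$.

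The crux is then to realize this negative combination as a single genuine UC family $\mathcal{F}\supseteq\mathcal{A}$ violating (1). I would glue copies of the $\mathcal{B}^{(j)}$ by attaching fresh ``tag'' coordinates chosen so that all pairwise cross-unions collapse into one controllable fiber. Concretely, take $N:=\sum_j p_j$ copies indexed by $\ell=1,\dots,N$ (copy $\ell$ carrying $\mathcal{B}^{(j(\ell))}$), adjoin fresh elements $z_1,\dots,z_N$, and tag copy $\ell$ with $S_\ell:=\{z_1,\dots,z_N\}\setminus\{z_\ell\}$, so that $S_\ell\cup S_{\ell'}=\{z_1,\dots,z_N\}=:\top$ whenever $\ell\neq\ell'$. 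Let $\mathcal{F}$ carry fiber $\mathcal{A}$ over the empty tag, a copy of $\mathcal{B}^{(j(\ell))}$ over each $S_\ell$, and the fixed family $\mathcal{D}:=\gen{\bigcup_j\mathcal{B}^{(j)}}$ over $\top$. Verifying that $\mathcal{F}$ is union-closed reduces to a handful of fiber-absorption identities, namely $\mathcal{A}\uplus\mathcal{B}^{(j)}\subseteq\mathcal{B}^{(j)}$, $\mathcal{B}^{(j)}\uplus\mathcal{B}^{(j')}\subseteq\mathcal{D}$, and $\mathcal{A}\uplus\mathcal{D}\subseteq\mathcal{D}$, all immediate from $\mathcal{B}^{(j)}\uplus\mathcal{A}=\mathcal{B}^{(j)}$ and the definition of $\mathcal{D}$; and $\mathcal{A}\subseteq\mathcal{F}$ via the empty tag. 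Counting by fibers, $2|\mathcal{F}_i|-|\mathcal{F}|$ equals a fixed error term coming from $\mathcal{A}$ and $\mathcal{D}$ plus $\sum_j p_j\bigl(2|\mathcal{B}^{(j)}_i|-|\mathcal{B}^{(j)}|\bigr)$. Replacing each $p_j$ by $t\,p_j$ leaves $\mathcal{A}$ and $\mathcal{D}$ (hence the error) unchanged while scaling the strictly negative main sum linearly in $t$; for $t$ large this forces $|\mathcal{F}_i|<|\mathcal{F}|/2$ for all $i\in[n]$ simultaneously, contradicting (1). The delicate point throughout is the tag design guaranteeing that cross-unions between distinct copies are absorbed into the single bounded fiber $\mathcal{D}$, so that the error stays uniformly bounded and the dominant negative contribution survives the limit.
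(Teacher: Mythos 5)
The paper does not actually prove this statement: Theorem~\ref{Poonen} is imported from Poonen's 1992 paper and used as a black box, so there is no in-paper proof to compare against. Judged on its own, your argument is correct and complete, and it is essentially a reconstruction of Poonen's original proof rather than a new route. For $(2)\Rightarrow(1)$ the fibering of $\mathcal{F}$ over the traces $T\subseteq U(\mathcal{F})\setminus[n]$ works: each fiber $\mathcal{B}^{T}$ is union-closed, and $\mathcal{B}^{T}\uplus\mathcal{A}=\mathcal{B}^{T}$ follows from $\mathcal{A}\subseteq\mathcal{F}$ together with $\emptyset\in\mathcal{A}$ (which gives the containment $\mathcal{B}^{T}\subseteq\mathcal{B}^{T}\uplus\mathcal{A}$), so summing \eqref{poon} over fibers and averaging against the probability vector $c$ yields (1). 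For $(1)\Rightarrow(2)$, the minimax/LP-duality step correctly converts the nonexistence of $c$ into a rational convex combination with $\sum_j p_j\bigl(2|\mathcal{B}^{(j)}_i|-|\mathcal{B}^{(j)}|\bigr)<0$ for all $i\in[n]$, and the co-singleton tags $S_\ell$ do exactly what is needed: all cross-unions between distinct copies land in the single fiber $\gen{\bigcup_j\mathcal{B}^{(j)}}$ over the full tag set, all checks of union-closure reduce to $\mathcal{B}^{(j)}\uplus\mathcal{A}=\mathcal{B}^{(j)}$ and union-closure of the generated family, and the $\mathcal{A}$- and $\mathcal{D}$-fibers contribute an error independent of the scaling parameter $t$, so the strictly negative main term dominates for large $t$. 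Two minor points you should make explicit in a final write-up: take $t$ large enough that $N\geq 2$ so the tag $S_\ell$ is never empty and the fibers are genuinely disjoint from $\mathcal{A}$, and note that the left-hand side of the minimax identity is attained (compactness), so its negativity really does produce a point of the convex hull in the open negative orthant, where rationality of the $v(\mathcal{B})$ lets you choose rational coefficients.
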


It is important to note that Poonen's Theorem still holds if $\emptyset \not \in \mathcal{A}$. In this case the condition $\mathcal{B} \uplus \mathcal{A} = \mathcal{B}$ becomes $\mathcal{B} \uplus \mathcal{A} \subseteq \mathcal{B}$. This is an equivalent condition we find in Vaughan \cite{Vaughan1}, \cite{Vaughan2}, \cite{Vaughan3}. 
For a fixed UC family $\mathcal{A}$ such that $\emptyset \in \mathcal{A}$, the second statement in Theorem~\ref{Poonen} can be seen as a polyhedron defined as the following:
\[  P^{\mathcal{A}}:=\left\{ y \in \mathbb{R}^n \ \vline \ \begin{array}{ll}
         \sum_{i\in [n]}y_i = 1;\\
         \\
         \sum_{i\in [n]}y_i|\mathcal{B}_i| \geq |\mathcal{B}|/2 & \mbox{ $\forall \text{ UC } \mathcal{B} \subseteq \mathcal{P}([n]) : \mathcal{B} \uplus \mathcal{A} = \mathcal{B}$};\\
         \\
          y_i \geq 0 & \ \mbox{$\forall i \in [n]$};\end{array} \right\} \] 
Furthermore since the coefficients (and the right-hand side vector) are all rational, if $P^{\mathcal{A}}$ is nonempty, we can safely assume (via Fourier-Motzkin elimination) that it contains a rational vector. This is a very well-known result (for more details see the excellent exposition of Aigner and Ziegler \cite[pp.66]{Aigner}) which we formally state as follows for completeness and reference.
\begin{prop} \label{FM}
Let $P$ be a nonempty rational polyhedron. Then $P$ contains a rational vector.
\end{prop}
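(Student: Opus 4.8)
The plan is to follow the route indicated by the parenthetical remark and prove the statement by Fourier--Motzkin elimination, which manifestly preserves rationality of all coefficients. Since $P$ is a rational polyhedron I would first fix a representation $P = \{x \in \mathbb{R}^n : Ax \le b\}$ with $A \in \mathbb{Q}^{m\times n}$ and $b \in \mathbb{Q}^m$ (any equality such as $\sum_i y_i = 1$ is split into two opposite inequalities, all still rational). The goal is to produce an \emph{explicit} rational point, and the key structural fact I will use is that eliminating a single variable from a rational system yields another rational system describing exactly the projection onto the remaining coordinates.

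Concretely, I would eliminate the variables $x_n, x_{n-1}, \ldots, x_2$ one at a time. To eliminate $x_k$ from the current rational system, I partition its inequalities into those in which the coefficient of $x_k$ is positive, negative, or zero; the zero-coefficient inequalities are kept unchanged, and each positive/negative pair is combined (by the usual nonnegative rational multipliers that clear $x_k$) into a single new inequality. Because sums and products of rationals are rational, every inequality in the resulting system again has rational coefficients, and the correctness of Fourier--Motzkin elimination (see Aigner--Ziegler) shows its solution set is precisely $\operatorname{proj}_{x_1,\ldots,x_{k-1}}$ of the previous feasible region. Since $P \neq \emptyset$, each successive projection is nonempty, so the final one-variable system in $x_1$ is a consistent, nonempty set of rational bounds $l \le x_1 \le u$ (possibly one-sided) with $l,u \in \mathbb{Q}$; I pick any rational $x_1^\ast$ in it.

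Then I would reverse the elimination. Having already chosen rational values $x_1^\ast,\ldots,x_{k-1}^\ast$, substituting them into the system obtained just before $x_k$ was eliminated leaves a system whose only free variable is $x_k$ and whose coefficients and right-hand sides are rational (rational data evaluated at rational points). By correctness of the elimination this system is nonempty precisely because $(x_1^\ast,\ldots,x_{k-1}^\ast)$ lies in the corresponding projection, so it defines a nonempty interval with rational endpoints, from which I pick a rational $x_k^\ast$. Iterating for $k = 2, \ldots, n$ produces a fully rational vector $(x_1^\ast,\ldots,x_n^\ast) \in P$.

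The only point that needs care, and the main obstacle, is the back-substitution invariant: one must verify that after fixing rational values for the earlier coordinates the feasible interval for the next coordinate is genuinely nonempty and still has rational endpoints. Both properties follow from the correctness of Fourier--Motzkin elimination (the eliminated system is exactly the projection, so feasibility propagates downward at each step) together with the observation that evaluating a rational affine expression at a rational point again yields a rational bound. The rationality of the endpoints themselves (and the density of $\mathbb{Q}$ in $\mathbb{R}$) then guarantees that a rational choice is available at every stage, completing the construction.
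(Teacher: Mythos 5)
Your proof is correct and follows exactly the route the paper indicates: the paper gives no proof of Proposition~\ref{FM}, merely citing Fourier--Motzkin elimination and the exposition of Aigner and Ziegler, and your argument is the standard elaboration of that citation. The elimination-then-back-substitution scheme, with rationality preserved at every stage and nonemptiness propagated via the exact-projection property, is sound as written.
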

 We can use the simplex or interior point methods to find a feasible point of $P^{\mathcal{A}}$, or show that one does not exist via Farkas' Lemma. Suppose $P^{\mathcal{A}}$ is nonempty. Then we can scale any rational vector contained in $P^{\mathcal{A}}$ and arrive at an integer vector. In particular, for reasons that we outline in Section~\ref{sec:Relaxation}, we want to choose a rational vector such that the $\ell_1$ norm of the resulting integer vector is as small as possible. This explains the objective function of the following integer program.  Let $I^{\mathcal{A}}$ denote the following integer program:
\begin{align*}\
\min &\sum_{i \in [n]} z_i &\\
\textrm{s.t. }&\sum_{i\in [n]}z_i|\mathcal{B}_i| \geq \left(|\mathcal{B}|/2\right) \sum_{i \in [n]} z_i  & \forall \mathcal{B} \subseteq \mathcal{P}([n]) : \mathcal{B} \uplus \mathcal{A} = \mathcal{B} \\
&\sum_{i \in [n]} z_i \geq 1 \\
& z_i  \in \mathbb{Z}_{\geq 0} & \forall i \in [n]
\end{align*}\\
A \emph{feasible} solution of $I^{\mathcal{A}}$ is a vector $\bar{z} \in \mathbb{Z}^n_{\geq 0}$ such that $\bar{z}$ satisfies all the given inequalities of $I^{\mathcal{A}}$.
\begin{prop}\label{equivalence}
Let $\mathcal{A}$ be a UC family such that $\emptyset \in \mathcal{A}$. Then $P^{\mathcal{A}}$ is nonempty if and only if there exists a feasible solution of $I^{\mathcal{A}}$.
\end{prop}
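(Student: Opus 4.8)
The plan is to prove both implications by a single homogenization-and-scaling observation: the defining inequalities of $I^{\mathcal{A}}$ are homogeneous of degree one in $z$ (each side scales linearly under $z \mapsto \lambda z$), so feasibility of those inequalities is invariant under positive scaling, while $P^{\mathcal{A}}$ simply pins down the normalization $\sum_{i \in [n]} y_i = 1$. Thus passing between the two objects should amount to rescaling so that the coordinate sum equals $1$ on the one side, and clearing denominators to an integer vector on the other. The constraint $\sum_{i \in [n]} z_i \geq 1$ in $I^{\mathcal{A}}$ is exactly what rules out the degenerate all-zero vector and hence plays the role of the normalization in $P^{\mathcal{A}}$.

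For the direction ``$P^{\mathcal{A}}$ nonempty $\Rightarrow I^{\mathcal{A}}$ feasible'', I would first invoke Proposition~\ref{FM}: since all coefficients and right-hand sides defining $P^{\mathcal{A}}$ are rational and $P^{\mathcal{A}}$ is assumed nonempty, it contains a rational point $y^\ast$. I would then write $y^\ast_i = p_i/q$ with a common positive integer denominator $q$ and nonnegative integers $p_i$, and set $z := q\,y^\ast = (p_1,\ldots,p_n) \in \mathbb{Z}^n_{\geq 0}$. Then $\sum_{i\in[n]} z_i = q\sum_{i\in[n]} y^\ast_i = q \geq 1$, and multiplying the inequality $\sum_{i\in[n]} y^\ast_i|\mathcal{B}_i| \geq |\mathcal{B}|/2$ by $q$ while substituting $q = \sum_{i\in[n]} z_i$ yields $\sum_{i\in[n]} z_i|\mathcal{B}_i| \geq (|\mathcal{B}|/2)\sum_{i\in[n]} z_i$ for every admissible $\mathcal{B}$. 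Hence $z$ satisfies all constraints of $I^{\mathcal{A}}$.

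Conversely, for ``$I^{\mathcal{A}}$ feasible $\Rightarrow P^{\mathcal{A}}$ nonempty'', I would take a feasible $\bar z$ and set $s := \sum_{i\in[n]} \bar z_i$, which is a positive integer since $s \geq 1$. Defining $y := \bar z / s$ gives $y_i \geq 0$ and $\sum_{i\in[n]} y_i = 1$, and dividing the homogeneous inequality $\sum_{i\in[n]} \bar z_i|\mathcal{B}_i| \geq (|\mathcal{B}|/2)\,s$ through by $s$ produces $\sum_{i\in[n]} y_i|\mathcal{B}_i| \geq |\mathcal{B}|/2$ for every admissible $\mathcal{B}$. Thus $y \in P^{\mathcal{A}}$, so $P^{\mathcal{A}} \neq \emptyset$.

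I expect no genuine obstacle here; the two maps $y^\ast \mapsto q\,y^\ast$ and $\bar z \mapsto \bar z / s$ are mutually inverse up to a positive scalar, and the entire content of the statement is the homogeneity bookkeeping together with the normalization swap. The only point requiring a cited result rather than bare algebra is the forward direction, where I must move from an arbitrary feasible point to an integral one; this is supplied precisely by Proposition~\ref{FM}, which guarantees a \emph{rational} point whose denominators can then be cleared. I would flag that the admissible families $\mathcal{B}$ (those with $\mathcal{B} \uplus \mathcal{A} = \mathcal{B}$) are identical in both formulations, so the index set of inequalities matches term by term and the equivalence is exact rather than up to approximation.
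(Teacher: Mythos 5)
Your proposal is correct and follows essentially the same route as the paper: both directions invoke Proposition~\ref{FM} to obtain a rational point, clear denominators (you use a single common denominator $q$ where the paper takes the least common multiple of the individual denominators, which is the same step) to pass to $I^{\mathcal{A}}$, and normalize by the $\ell_1$ norm to return to $P^{\mathcal{A}}$. The homogeneity bookkeeping you describe is exactly the content of the paper's argument.
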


\begin{proof}
Suppose $P^{\mathcal{A}}$ is nonempty and let $\bar{y} \in P^{\mathcal{A}}$. From Proposition~\ref{FM} we can safely assume that $\bar{y} \in \mathbb{Q}^n_{\geq 0}$, i.e., $\bar{y} = (\bar{y}_1=\frac{a_1}{b_1}, \bar{y}_2 = \frac{a_2}{b_2} , \ldots, \bar{y}_n  = \frac{a_n}{b_n})$ such that $b_i \geq 1$ for all $i \in [n]$. Let $g \in \mathbb{Z}_{\geq 0}$ such that $g = lcm(b_1,b_2,\ldots, b_n)$, and let $\bar{z}_i \in \mathbb{Z}_{\geq 0}$ such that $\bar{z}_i = g\bar{y}_i$ for all $i \in [n]$. Define $\bar{z}:=(\bar{z}_1,\bar{z}_2, \ldots, \bar{z}_n)$. It follows that $\bar{z} \in \mathbb{Z}^n_{\geq 0}$ is a feasible solution of $I^{\mathcal{A}}$. \\ For the other direction, suppose the vector $\bar{z} \in \mathbb{Z}^n_{\geq 0}$ is a feasible solution of $I^{\mathcal{A}}$. Let $\bar{z} = (\bar{z}_1, \bar{z}_2, \ldots, \bar{z}_n)$. Define $\bar{y}_i := \bar{z}_i /(\sum_{i \in [n]} \bar{z}_i)$ for all $i \in [n]$ and $\bar{y}:=(\bar{y}_1,\bar{y}_2, \ldots, \bar{y}_n)$. It follows that $\bar{y}\in P^{\mathcal{A}}$. 
\end{proof}
We need the following corollary of Poonen's Theorem, a version of which is already noted in Morris \cite{Morris}. We formalize it again here for clarity and reference.
\begin{cor}\label{weight}
Let $\mathcal{A}$ be a UC family such that $\emptyset \in \mathcal{A}$. The following statements are equivalent:
\begin{enumerate}

\item For every UC family $ \mathcal{F} \supseteq \mathcal{A}$, there exists $i \in [n]$ such that $  |\mathcal{F}_i| \geq |\mathcal{F}|/2$.

\item There exist $c_i \in \mathbb{Q}_{\geq 0}$ for all $i\in [n]$ with $\sum_{i\in [n]}c_i = 1$, such that for every UC family $ \mathcal{B} \subseteq \mathcal{P}([n])$ with $\mathcal{B} \uplus \mathcal{A} = \mathcal{B}$, $\sum_{S\in \mathcal{B}}\left(\sum_{i \in S}c_i - \sum_{i \notin S}c_i \right) \geq 0$ holds.

\end{enumerate}
\end{cor}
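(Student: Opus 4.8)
The plan is to reduce everything to Theorem~\ref{Poonen}, since the first condition here is \emph{verbatim} the first condition of Poonen's Theorem. Thus it suffices to show that the second condition here is equivalent to the second condition there, with the single refinement that the weights may be taken rational rather than merely real. I would therefore split the work into an algebraic reformulation of the inequality and a real-to-rational upgrade.

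The key step is a purely algebraic manipulation. Fix weights $c_i$ with $\sum_{i \in [n]} c_i = 1$ and a UC family $\mathcal{B}$ with $\mathcal{B} \uplus \mathcal{A} = \mathcal{B}$. For any $S \in \mathcal{B}$, the normalization gives $\sum_{i \in S} c_i - \sum_{i \notin S} c_i = 2\sum_{i \in S} c_i - 1$. Summing over all $S \in \mathcal{B}$ and interchanging the order of summation (equivalently, counting each $c_i$ once for every set of $\mathcal{B}$ containing $i$, which occurs exactly $|\mathcal{B}_i|$ times) yields $\sum_{S \in \mathcal{B}}\bigl(\sum_{i \in S} c_i - \sum_{i \notin S} c_i\bigr) = 2 \sum_{i \in [n]} c_i |\mathcal{B}_i| - |\mathcal{B}|$. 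Hence this quantity is nonnegative if and only if $\sum_{i \in [n]} c_i |\mathcal{B}_i| \geq |\mathcal{B}|/2$; that is, the inequality in the second condition above and Poonen's inequality~\eqref{poon} are equivalent under the constraint $\sum_{i\in[n]} c_i = 1$.

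With this identity established, the upgrade from real to rational weights is immediate. By the equivalence just shown, the set of weight vectors satisfying $\sum_{i\in[n]} c_i = 1$, nonnegativity, and the full family of inequalities in the second condition is exactly the polyhedron $P^{\mathcal{A}}$. If the first condition holds, Theorem~\ref{Poonen} furnishes real weights satisfying~\eqref{poon}, so $P^{\mathcal{A}}$ is nonempty; since all its coefficients and right-hand sides are rational, Proposition~\ref{FM} guarantees that $P^{\mathcal{A}}$ contains a rational vector, which supplies the required $c_i \in \mathbb{Q}_{\geq 0}$. Conversely, rational weights satisfying the second condition are in particular real weights satisfying~\eqref{poon}, so Theorem~\ref{Poonen} returns the first condition.

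I do not expect any serious obstacle: the entire content is the bookkeeping of the summation swap, the careful use of the normalization $\sum_{i\in[n]} c_i = 1$ to turn the symmetric difference into Poonen's form, and the observation that the reformulated constraint set coincides with the rational polyhedron $P^{\mathcal{A}}$ so that Proposition~\ref{FM} applies. The one point demanding attention is verifying the double count $\sum_{S \in \mathcal{B}} \sum_{i \in S} c_i = \sum_{i \in [n]} c_i |\mathcal{B}_i|$, which is exactly where the quantity $|\mathcal{B}_i|$ re-enters and connects the two formulations.
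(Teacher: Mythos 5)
Your proof is correct and follows essentially the same route as the paper: the identical summation-swap identity $\sum_{S\in \mathcal{B}}\bigl(\sum_{i \in S}c_i - \sum_{i \notin S}c_i \bigr) = 2\sum_{i\in [n]}c_i|\mathcal{B}_i| - |\mathcal{B}|\sum_{i \in [n]}c_i$ reducing the second condition to Poonen's inequality~\eqref{poon}, followed by an appeal to Theorem~\ref{Poonen}. If anything you are slightly more explicit than the paper, which simply says the result ``follows from Poonen's Theorem'' and leaves the real-to-rational upgrade via Proposition~\ref{FM} and the rationality of $P^{\mathcal{A}}$ implicit, whereas you spell that step out.
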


\begin{proof}
Fix a UC family $ \mathcal{B} \subseteq \mathcal{P}([n])$ with $\mathcal{B} \uplus \mathcal{A} = \mathcal{B}$. Then the following holds,
\begin{flalign*} 
\sum_{S\in \mathcal{B}}\left(\sum_{i \in S}c_i - \sum_{i \notin S}c_i \right) &= 2\sum_{S\in \mathcal{B}}\sum_{i \in S}c_i  -\sum_{S\in \mathcal{B}}\left(\sum_{i \notin S}c_i + \sum_{i \in S}c_i \right)\\
                                                                                                                       &= 2\sum_{S\in \mathcal{B}}\sum_{i \in S}c_i  -\sum_{S\in \mathcal{B}}\sum_{i \in [n]}c_i\\
                                                                                                                       &= 2\sum_{i\in [n]}c_i|\mathcal{B}_i| - |\mathcal{B}|\sum_{i \in [n]}c_i \geq 0\\ 
                                                                                                                     &\Longleftrightarrow  \sum_{i\in [n]}c_i|\mathcal{B}_i|\geq  |\mathcal{B}|/2. \\
\end{flalign*}\\
Since the above holds for every  UC family $ \mathcal{B} \subseteq \mathcal{P}([n])$ with $\mathcal{B} \uplus \mathcal{A} = \mathcal{B}$, the desired result follows from Poonen's Theorem. 
\end{proof}
Proposition~\ref{equivalence} shows that if $P^{\mathcal{A}}$ is nonempty we can simply scale a rational vector contained in it and arrive at an integer vector. Then the proof of the previous corollary implies the following.
\begin{cor}\label{integer}
Let $\mathcal{A}$ be a UC family such that $\emptyset \in \mathcal{A}$. The following statements are equivalent:
\begin{enumerate}

\item For every UC family $ \mathcal{F} \supseteq \mathcal{A}$, there exists $i \in [n]$ such that $  |\mathcal{F}_i| \geq |\mathcal{F}|/2$.

\item There exist $c_i \in \mathbb{Z}_{\geq 0}$ for all $i\in [n]$ with $\sum_{i\in [n]}c_i \geq 1$, such that for every UC family $ \mathcal{B} \subseteq \mathcal{P}([n])$ with $\mathcal{B} \uplus \mathcal{A} = \mathcal{B}$, $\sum_{S\in \mathcal{B}}\left(\sum_{i \in S}c_i - \sum_{i \notin S}c_i \right) \geq 0$ holds.

\end{enumerate}
\end{cor}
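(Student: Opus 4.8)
The plan is to exploit the fact that the first statement here is \emph{verbatim} the first statement of both Poonen's Theorem and Corollary~\ref{weight}, so the entire task reduces to showing that the new second statement---integer weights $c_i \in \mathbb{Z}_{\geq 0}$ with $\sum_{i \in [n]} c_i \geq 1$---is equivalent to one of the conditions already established. The natural intermediary is the feasibility of the integer program $I^{\mathcal{A}}$, which Proposition~\ref{equivalence} has already tied to the nonemptiness of $P^{\mathcal{A}}$.

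First I would reuse, unchanged, the algebraic identity derived in the proof of Corollary~\ref{weight}, namely
\[
\sum_{S \in \mathcal{B}}\left(\sum_{i \in S} c_i - \sum_{i \notin S} c_i \right) = 2\sum_{i \in [n]} c_i |\mathcal{B}_i| - |\mathcal{B}| \sum_{i \in [n]} c_i,
\]
which holds for \emph{any} nonnegative reals $c_i$ and any UC family $\mathcal{B}$, with no normalization assumed. This shows that the inequality in the second statement is equivalent to $\sum_{i \in [n]} c_i |\mathcal{B}_i| \geq (|\mathcal{B}|/2)\sum_{i \in [n]} c_i$, which is precisely the family of constraints defining $I^{\mathcal{A}}$. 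Combining this with the side conditions $\sum_{i \in [n]} c_i \geq 1$ and $c_i \in \mathbb{Z}_{\geq 0}$, the second statement holds if and only if the vector $(c_1,\ldots,c_n)$ is a feasible solution of $I^{\mathcal{A}}$.

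Next I would simply chain the remaining equivalences. By Proposition~\ref{equivalence}, $I^{\mathcal{A}}$ admits a feasible solution if and only if $P^{\mathcal{A}}$ is nonempty; and $P^{\mathcal{A}}$ is by construction the polyhedron encoding the second statement of Poonen's Theorem (equivalently Corollary~\ref{weight}), so its nonemptiness is equivalent to the first statement. Concatenating these yields the claimed equivalence.

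The only point requiring care---and the reason there is no genuine obstacle here---is that the weight inequality is homogeneous of degree one in the $c_i$, so multiplying a feasible weight vector by any positive scalar preserves it. This is exactly what licenses relaxing the normalization from the rational constraint $\sum_{i \in [n]} c_i = 1$ of Corollary~\ref{weight} to the integer constraint $\sum_{i \in [n]} c_i \geq 1$: scaling the rational weights by the least common multiple of their denominators (as in the proof of Proposition~\ref{equivalence}) produces integer weights satisfying the same homogeneous inequalities, with the sum now a positive integer and hence at least $1$. I therefore expect the proof to be a short repackaging of Proposition~\ref{equivalence} and Corollary~\ref{weight} through this scaling observation, rather than a step needing new ideas.
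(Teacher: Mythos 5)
Your proposal is correct and follows essentially the same route as the paper: rewrite the condition in statement (2) via the algebraic identity from the proof of Corollary~\ref{weight} to see it is exactly feasibility of $I^{\mathcal{A}}$, then chain Proposition~\ref{equivalence} with Poonen's Theorem. The closing remark on homogeneity and scaling is the same observation that underlies Proposition~\ref{equivalence}, so nothing new is needed.
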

\begin{proof}
It is sufficient to follow the proof of Corollary~\ref{weight} with $c_i \in \mathbb{Z}_{\geq 0}$ for all $i\in [n]$ such that $\sum_{i\in [n]}c_i \geq 1$. Then we arrive at the following
\begin{equation}
2\sum_{i\in [n]}c_i|\mathcal{B}_i| - |\mathcal{B}|\sum_{i \in [n]}c_i \geq 0 \nonumber.
\end{equation}
The desired result is implied from Proposition~\ref{equivalence} and Poonen's Theorem. 
\end{proof}
From now on, we can base relevant arguments (when convenient) on real, rational or integer vectors.
\begin{cor}\label{rqz}
Let $\mathcal{A}$ be a UC family such that $\emptyset \in \mathcal{A}$. The following statements are equivalent:
\begin{enumerate}
\item  $\mathcal{A}$ is an FC-family.
\item There exist $c_i \in \mathbb{R}_{\geq 0}$ for all $i\in [n]$ with $\sum_{i\in [n]}c_i = 1$, such that for every UC family $ \mathcal{B} \subseteq \mathcal{P}([n])$ with $\mathcal{B} \uplus \mathcal{A} = \mathcal{B}$, $\sum_{i\in [n]}c_i|\mathcal{B}_i| \geq |\mathcal{B}|/2$ holds.
\item There exist $c_i \in \mathbb{Q}_{\geq 0}$ for all $i\in [n]$ with $\sum_{i\in [n]}c_i = 1$, such that for every UC family $ \mathcal{B} \subseteq \mathcal{P}([n])$ with $\mathcal{B} \uplus \mathcal{A} = \mathcal{B}$, $\sum_{S\in \mathcal{B}}\left(\sum_{i \in S}c_i - \sum_{i \notin S}c_i \right) \geq 0$ holds.
\item There exist $c_i \in \mathbb{Z}_{\geq 0}$ for all $i\in [n]$ with $\sum_{i\in [n]}c_i \geq 1$, such that for every UC family $ \mathcal{B} \subseteq \mathcal{P}([n])$ with $\mathcal{B} \uplus \mathcal{A} = \mathcal{B}$, $\sum_{S\in \mathcal{B}}\left(\sum_{i \in S}c_i - \sum_{i \notin S}c_i \right) \geq 0$ holds.
\item There exist $c_i \in \mathbb{Z}_{\geq 0}$ for all $i\in [n]$ with $\sum_{i\in [n]}c_i \geq 1$, such that for every UC family $ \mathcal{B} \subseteq \mathcal{P}([n])$ with $\mathcal{B} \uplus \mathcal{A} = \mathcal{B}$, $\sum_{i\in [n]}c_i|\mathcal{B}_i| \geq \left(|\mathcal{B}|/2\right) \sum_{i \in [n]} c_i$ holds.
\end{enumerate}
\end{cor}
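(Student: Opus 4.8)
The plan is to prove the five statements equivalent by assembling the results already established, rather than proving any single implication from scratch. The crucial preliminary observation is that, because $\mathcal{A}$ is union-closed and finite, the set $U(\mathcal{A})=\bigcup_{S\in\mathcal{A}}S$ is itself a member of $\mathcal{A}$ and contains every other set in $\mathcal{A}$. Hence the largest (cardinality-wise) set $A$ of $\mathcal{A}$ is exactly $U(\mathcal{A})=[n]$. This means that the defining condition of an FC-family---``there exists $i \in A$ contained in at least half the sets of $\mathcal{F}$''---is literally the condition ``there exists $i \in [n]$ with $|\mathcal{F}_i|\geq|\mathcal{F}|/2$'' appearing as the first statement of Theorem~\ref{Poonen}, Corollary~\ref{weight}, and Corollary~\ref{integer}. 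So statement~1 of the present corollary coincides with that common first statement, and I would use it as the hub through which all the other equivalences pass.

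With this identification in hand, three of the equivalences are immediate. The equivalence $1 \Leftrightarrow 2$ is exactly Poonen's Theorem (Theorem~\ref{Poonen}), since statement~2 here is verbatim its second item. The equivalence $1 \Leftrightarrow 3$ is exactly Corollary~\ref{weight}, whose second item is the rational-weight, summation form with $\sum_{i\in[n]}c_i=1$. The equivalence $1 \Leftrightarrow 4$ is exactly Corollary~\ref{integer}, whose second item is the integer-weight form with $\sum_{i\in[n]}c_i\geq 1$. Thus statements~2,~3, and~4 are each equivalent to statement~1, and therefore to one another.

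It remains only to connect statement~5 to the rest; I would show $4 \Leftrightarrow 5$. Both statements carry the identical constraints $c_i\in\mathbb{Z}_{\geq 0}$ and $\sum_{i\in[n]}c_i\geq 1$, so the only thing to check is that their inequalities coincide for each fixed UC family $\mathcal{B}$ with $\mathcal{B}\uplus\mathcal{A}=\mathcal{B}$. This is precisely the algebraic identity used inside the proof of Corollary~\ref{weight},
\[
\sum_{S\in\mathcal{B}}\Big(\sum_{i\in S}c_i-\sum_{i\notin S}c_i\Big)
= 2\sum_{i\in[n]}c_i|\mathcal{B}_i| - |\mathcal{B}|\sum_{i\in[n]}c_i ,
\]
which holds verbatim with integer coefficients. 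The left-hand side is nonnegative if and only if $\sum_{i\in[n]}c_i|\mathcal{B}_i|\geq(|\mathcal{B}|/2)\sum_{i\in[n]}c_i$, i.e.\ the inequality of statement~4 holds for $\mathcal{B}$ exactly when that of statement~5 does. Quantifying over all admissible $\mathcal{B}$ yields $4 \Leftrightarrow 5$, closing the chain.

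There is no deep obstacle here: the content is bookkeeping, and the point requiring the most care is the verbatim matching of the constraint forms ($\sum_{i\in[n]}c_i = 1$ in the rational statements~2--3 versus $\sum_{i\in[n]}c_i \geq 1$ in the integer statements~4--5) against the corollaries invoked. The scaling argument of Proposition~\ref{equivalence} is what reconciles these normalizations, and it is already folded into Corollary~\ref{integer}; I would simply note that it lets one pass between a normalized rational vector and a scaled integer vector without affecting any of the inequalities, so the relaxation from equality to inequality in the integer statements is harmless. The only genuinely computational step---the displayed identity---is the same one established earlier, so nothing new needs to be verified.
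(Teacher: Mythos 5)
Your proposal is correct and follows essentially the same route as the paper: statement~1 is identified with the common first statement of Theorem~\ref{Poonen}, Corollary~\ref{weight}, and Corollary~\ref{integer} (your preliminary remark that the largest set of a UC family is $U(\mathcal{A})=[n]$ is a valid justification of this identification, which the paper leaves implicit), and those three results give $1\Leftrightarrow 2$, $1\Leftrightarrow 3$, $1\Leftrightarrow 4$ exactly as in the paper. The only divergence is the final link: the paper obtains $2\Leftrightarrow 5$ from Proposition~\ref{equivalence} (the scaling between $P^{\mathcal{A}}$ and $I^{\mathcal{A}}$), whereas you obtain $4\Leftrightarrow 5$ directly from the identity $\sum_{S\in\mathcal{B}}\bigl(\sum_{i\in S}c_i-\sum_{i\notin S}c_i\bigr)=2\sum_{i\in[n]}c_i|\mathcal{B}_i|-|\mathcal{B}|\sum_{i\in[n]}c_i$; both closures are valid, and yours is if anything the more elementary, since it needs only a term-by-term rewriting for each fixed $\mathcal{B}$ rather than an appeal to the rationality-and-scaling argument.
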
 
\begin{proof}
$(1) \iff (2)$ from Poonen's Theorem. $(1) \iff (3)$ from Corollary~\ref{weight}. $(1) \iff (4)$ from Corollary~\ref{integer}. $(2) \iff (5)$ from Proposition~\ref{equivalence}. 
\end{proof}
In the next proposition, we show that for FC or Non\textendash FC-families we can always assume (when convenient) that the empty set is present. 
\begin{prop}\label{emptynot}
Let $\mathcal{A}$ be a UC family such that $\emptyset \in \mathcal{A}$. Then $\mathcal{A}$ is an FC-family if and only if $\mathcal{A} \setminus \left\{\emptyset\right\}$ is an FC-family.
\end{prop}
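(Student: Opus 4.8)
The plan is to reduce the statement to the weight characterization of Corollary~\ref{rqz} (equivalently Poonen's Theorem, together with the remark that for a family not containing $\emptyset$ the side condition $\mathcal{B}\uplus\mathcal{A}=\mathcal{B}$ is replaced by $\mathcal{B}\uplus\mathcal{A}\subseteq\mathcal{B}$), and then to observe that $\mathcal{A}$ and $\mathcal{A}\setminus\{\emptyset\}$ give rise to literally the same system of inequalities. First I would dispose of the degenerate case $\mathcal{A}=\{\emptyset\}$ and assume $\mathcal{A}$ contains a nonempty set; then $\mathcal{A}\setminus\{\emptyset\}$ is again UC, since a union of nonempty sets is nonempty, and $U(\mathcal{A}\setminus\{\emptyset\})=U(\mathcal{A})=[n]$, so both families are weighted over the same ground set $[n]$.

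The heart of the argument is that the collection of admissible test families $\mathcal{B}$ is identical for $\mathcal{A}$ and for $\mathcal{A}\setminus\{\emptyset\}$. Since $\uplus$ distributes over $\cup$ and $\mathcal{B}\uplus\{\emptyset\}=\mathcal{B}$, one has
\[
\mathcal{B}\uplus\mathcal{A}=\bigl(\mathcal{B}\uplus(\mathcal{A}\setminus\{\emptyset\})\bigr)\cup\mathcal{B}.
\]
Because $\emptyset\in\mathcal{A}$ forces $\mathcal{B}\uplus\mathcal{A}\supseteq\mathcal{B}$, the condition $\mathcal{B}\uplus\mathcal{A}=\mathcal{B}$ is equivalent to $\mathcal{B}\uplus\mathcal{A}\subseteq\mathcal{B}$, and by the displayed identity this holds precisely when $\mathcal{B}\uplus(\mathcal{A}\setminus\{\emptyset\})\subseteq\mathcal{B}$, which by the remark following Poonen's Theorem is exactly the admissibility condition for $\mathcal{A}\setminus\{\emptyset\}$. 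Thus the two feasibility problems for the weights $c_i$ share the same constraints $\sum_{i\in[n]}c_i|\mathcal{B}_i|\ge|\mathcal{B}|/2$ (which do not mention $\mathcal{A}$ except through the quantifier over $\mathcal{B}$) and the same normalization $\sum_{i\in[n]}c_i=1$. Hence a feasible weight system exists for one family if and only if it exists for the other, and applying Corollary~\ref{rqz} to each side yields the claimed equivalence.

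The step requiring the most care is the bookkeeping of the side condition when $\emptyset$ is present versus absent: one must use the $=$ form for $\mathcal{A}$ and the $\subseteq$ form for $\mathcal{A}\setminus\{\emptyset\}$, and the whole argument hinges on these being the same constraint. As a fully self-contained alternative that avoids invoking the remark, I could argue directly from the definition of FC: given any UC $\mathcal{F}\supseteq\mathcal{A}\setminus\{\emptyset\}$, adjoining $\emptyset$ (if absent) keeps $\mathcal{F}$ union-closed, leaves every $|\mathcal{F}_i|$ unchanged since $i\notin\emptyset$, and increases $|\mathcal{F}|$ by at most one, so any $i$ with $|\mathcal{F}_i|\ge|\mathcal{F}\cup\{\emptyset\}|/2$ also satisfies $|\mathcal{F}_i|\ge|\mathcal{F}|/2$; the converse direction is immediate because every $\mathcal{F}\supseteq\mathcal{A}$ already contains $\mathcal{A}\setminus\{\emptyset\}$. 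I expect the polyhedral route to be the cleaner write-up, with the case-free identity of constraint sets doing all the work.
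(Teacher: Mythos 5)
Your proposal is correct, but your preferred polyhedral route is not the paper's; the paper's proof is exactly your ``fully self-contained alternative.'' It argues directly from the definition of FC: for the forward direction, given a UC $\mathcal{F}\supseteq\mathcal{A}$ one passes to $\mathcal{F}\setminus\{\emptyset\}$ (each $|\mathcal{F}_i|$ is unchanged while $|\mathcal{F}|$ can only drop, so the frequent element survives), and for the converse every UC $\mathcal{F}\supseteq\mathcal{A}$ already contains $\mathcal{A}\setminus\{\emptyset\}$ and $U(\mathcal{A}\setminus\{\emptyset\})=U(\mathcal{A})$. Your polyhedral argument --- that $\mathcal{B}\uplus\mathcal{A}=\mathcal{B}$ and $\mathcal{B}\uplus(\mathcal{A}\setminus\{\emptyset\})\subseteq\mathcal{B}$ cut out the same collection of test families, hence the same weight system --- is also valid, and it makes transparent why the empty set is invisible at the level of Poonen's conditions. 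Its drawback is that it leans on the unproved remark following Theorem~\ref{Poonen} that Poonen's Theorem holds with the $\subseteq$ side condition when $\emptyset\notin\mathcal{A}$; that remark is essentially interderivable with the proposition you are proving (its usual justification is precisely the adjoin-$\emptyset$ argument), so the elementary route is the genuinely self-contained one, and it is the one the paper takes. Two small points in your favour: you explicitly flag the degenerate case $\mathcal{A}=\{\emptyset\}$ and you note that $\mathcal{A}\setminus\{\emptyset\}$ remains union-closed, neither of which the paper addresses.
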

\begin{proof}
Let $\mathcal{A}$ be a UC family such that $\emptyset \in \mathcal{A}$. Define $\widetilde{\mathcal{A}}:= \mathcal{A} \setminus \left\{\emptyset\right\}$. 
\\Suppose $\mathcal{A}$ is an FC-family. Then for each UC family $\mathcal{F} \supseteq \mathcal{A}$ there exists $i \in U(\mathcal{A})$ such that $|\mathcal{F}_i| \geq |\mathcal{F}|/2$. Hence $\mathcal{F}\setminus \left\{\emptyset\right\}$ also satisfies Frankl's conjecture. It follows that $\widetilde{\mathcal{A}}$ is an FC-family. \\For the other direction, suppose $\widetilde{\mathcal{A}}$ is an FC-family and let $\mathcal{F}$ be a UC family such that $\mathcal{F}\supseteq \mathcal{A}$. Then $\mathcal{F}\supseteq \widetilde{\mathcal{A}}$. Therefore there exists $i \in U(\widetilde{\mathcal{A}})$ such that $|\mathcal{F}_i| \geq |\mathcal{F}|/2$. Since $U(\widetilde{\mathcal{A}})= U(\mathcal{A})$, it follows that $\mathcal{A}$ is an FC-family. 
\end{proof}
\subsection{A Cutting-Plane Method for Poonen's Theorem}
As mentioned in the introduction, the main obstacle in using Poonen's Theorem to characterize FC-families is the potentially exponential number of constraints in $P^{\mathcal{A}}$ or (equivalently) $I^{\mathcal{A}}$. Therefore, our main goal in the rest of this section is to precisely define a method for starting with a small subset of the constraints that define $P^{\mathcal{A}}$ or $I^{\mathcal{A}}$ and then generate more constraints as needed. First we define a set of integer vectors contained in a polyhedron that determines, when the set is empty, that a given rational vector satisfies the second condition of Poonen's Theorem (this is Proposition~\ref{FranklIntProp}). Then we show that the set above is nonempty if and only if a given rational vector does not satisfy the second condition of Poonen's Theorem (this is Theorem~\ref{oracle}). Finally, this gives rise to an algorithm that determines whether a given $\mathcal{A}$ is FC or Non\textendash FC.

Corollary \ref{rqz} combined with the integer programming approach to UC families in in Pulaj, Raymond and Theis~\cite{Pulaj}, provides the background of our method.
Fix a UC family $\mathcal{A}$ such that $\emptyset \in \mathcal{A}$. As previously, we may assume $U(\mathcal{A})=[n]$. Let $c \in \mathbb{Z}^n_{\geq 0}$ such that $\sum_{i\in [n]}c_i \geq 1$. With every set $S \in \mathcal{P}([n])$, we associate a variable $x_S$, i.e, a component of a vector $x \in \mathbb{R}^{2^n}$ indexed by $S$. Given a family of sets $\mathcal{F} \subseteq \mathcal{P}([n])$, let $\mathcal{X}^{\mathcal{F}} \in \mathbb{R}^{2^n}$ denote the incidence vector of $\mathcal{F}$ defined (component-wise) as 
\[ \mathcal{X}^\mathcal{F}_S := \left\{ \begin{array}{ll}
         1 & \mbox{if $S \in \mathcal{F}$},\\
         0 & \mbox{if $S \not \in \mathcal{F}$}.\end{array} \right. \] Hence every family of sets $\mathcal{F} \subseteq \mathcal{P}([n])$ corresponds to a unique zero-one vector in $\mathbb{R}^{2^n}$ and vice versa. Let
 $X(\mathcal{A},c)$ denote the set of integer vectors contained in the polyhedron defined by the following inequalities:
\begin{align}\
& x_S+x_T \leq 1 + x_{S \cup T} & \tiny{\forall S \in \mathcal{P}([n]), \forall T \in \mathcal{P}([n])} \label{one}\\
&\sum_{S\in \mathcal{P}([n])}\left(\sum_{i \in S}c_i - \sum_{i \notin S}c_i \right)x_S +1 \leq 0 \label{two}\\
& x_S \leq x_{A\cup S} &  \forall S \in  \mathcal{P}([n]), \forall A \in \mathcal{A} \label{three}\\
& 0\leq x_S \leq 1 & \forall S\in \mathcal{P}([n]) \label{four}
\end{align}\\
Suppose $X(\mathcal{A},c)$ is nonempty and let $\bar{x} \in X(\mathcal{A},c)$. Then $\bar{x}= \mathcal{X}^{\mathcal{B}}$ for some family of sets $\mathcal{B}$ such that $\mathcal{B} \subseteq \mathcal{P}([n])$. Inequalities~\eqref{one} ensure that the chosen family $\mathcal{B}$ is UC, and we denote them as  UC inequalities. Inequalities~\eqref{three} ensure that $\mathcal{B} \, \uplus \, \mathcal{A} = \mathcal{B}$, and we denote them as Fixed-Set (FS) inequalities. We denote Inequality~\eqref{two} as the Weight Vector (WV) inequality and we explain it in the next proposition.
\begin{prop} \label{FranklIntProp}
Let $\mathcal{A}$ be a UC family such that $\emptyset \in \mathcal{A}$, and let $c \in \mathbb{Z}^n_{\geq 0}$ such that $\sum_{i\in [n]}c_i \geq 1$. If $X(\mathcal{A},c) = \emptyset$, then $\mathcal{A}$ is an FC-family.

\end{prop}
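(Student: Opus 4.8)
The plan is to decode the emptiness of $X(\mathcal{A},c)$ into a purely combinatorial statement about a weighted sum over admissible families, and then recognize that statement as condition (4) of Corollary~\ref{rqz}. First I would pin down which $0$--$1$ vectors satisfy the defining inequalities~\eqref{one},~\eqref{three},~\eqref{four}. The box constraints~\eqref{four} together with integrality force every component $x_S\in\{0,1\}$, so each integer point is the incidence vector $\mathcal{X}^{\mathcal{B}}$ of a unique family $\mathcal{B}\subseteq\mathcal{P}([n])$. Restricted to such vectors, I would check that the UC inequalities~\eqref{one} hold if and only if $\mathcal{B}$ is union-closed (taking $x_S=x_T=1$ forces $x_{S\cup T}=1$), and that the FS inequalities~\eqref{three} hold if and only if $A\cup S\in\mathcal{B}$ whenever $S\in\mathcal{B}$ and $A\in\mathcal{A}$, i.e.\ $\mathcal{B}\uplus\mathcal{A}\subseteq\mathcal{B}$. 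Here the hypothesis $\emptyset\in\mathcal{A}$ does real work: it yields $\mathcal{B}=\mathcal{B}\uplus\{\emptyset\}\subseteq\mathcal{B}\uplus\mathcal{A}$, so the containment upgrades to the equality $\mathcal{B}\uplus\mathcal{A}=\mathcal{B}$ demanded by Poonen's setup. Thus the integer points satisfying~\eqref{one},~\eqref{three},~\eqref{four} are exactly the incidence vectors of UC families $\mathcal{B}$ with $\mathcal{B}\uplus\mathcal{A}=\mathcal{B}$.

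The decisive step is the WV inequality~\eqref{two}. Evaluating it at $x=\mathcal{X}^{\mathcal{B}}$ collapses the sum over $\mathcal{P}([n])$ to a sum over $\mathcal{B}$, so it reads $\sum_{S\in\mathcal{B}}\left(\sum_{i\in S}c_i-\sum_{i\notin S}c_i\right)+1\le 0$, that is $\sum_{S\in\mathcal{B}}\left(\sum_{i\in S}c_i-\sum_{i\notin S}c_i\right)\le -1$. Because $c\in\mathbb{Z}^n_{\ge 0}$, this weighted sum is an integer, so the bound $\le -1$ is equivalent to being strictly negative. Combined with the previous paragraph, $X(\mathcal{A},c)\neq\emptyset$ holds exactly when some UC family $\mathcal{B}$ with $\mathcal{B}\uplus\mathcal{A}=\mathcal{B}$ makes this weighted sum negative. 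Taking the contrapositive, $X(\mathcal{A},c)=\emptyset$ says precisely that \emph{every} UC family $\mathcal{B}$ with $\mathcal{B}\uplus\mathcal{A}=\mathcal{B}$ satisfies $\sum_{S\in\mathcal{B}}\left(\sum_{i\in S}c_i-\sum_{i\notin S}c_i\right)\ge 0$.

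This last statement, witnessed by the given integer vector $c$ with $\sum_{i\in[n]}c_i\ge 1$, is verbatim condition (4) of Corollary~\ref{rqz}, which that corollary shows to be equivalent to $\mathcal{A}$ being an FC-family; invoking it then finishes the proof. I do not expect a deep obstacle, only two delicate bookkeeping points: the use of $\emptyset\in\mathcal{A}$ to convert the FS containment $\mathcal{B}\uplus\mathcal{A}\subseteq\mathcal{B}$ into the required equality, and the integrality of $c$, which lets the strict Weight Vector constraint be read as strict negativity of the weighted sum. It is precisely this integrality that makes the emptiness of $X(\mathcal{A},c)$ align cleanly with the requirement, in Corollary~\ref{rqz}, that the weighted sum be nonnegative for every admissible $\mathcal{B}$.
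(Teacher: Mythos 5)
Your proposal is correct and follows essentially the same route as the paper: identify the integer points of the system~\eqref{one},~\eqref{three},~\eqref{four} with incidence vectors of UC families $\mathcal{B}$ satisfying $\mathcal{B}\uplus\mathcal{A}=\mathcal{B}$, use the integrality of $c$ to convert the negation of the WV inequality $\le -1$ into the nonnegativity $\ge 0$ for every such $\mathcal{B}$, and invoke Corollary~\ref{rqz}. Your explicit remarks on the role of $\emptyset\in\mathcal{A}$ in upgrading $\mathcal{B}\uplus\mathcal{A}\subseteq\mathcal{B}$ to equality are a welcome bit of bookkeeping the paper leaves implicit, but the argument is the same.
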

\begin{proof} Suppose that $X(\mathcal{A},c) = \emptyset$. Let  $Y(\mathcal{A},c)$ be defined as the set of integer vectors contained in the polyhedron defined by Inequalities~\eqref{one},~\eqref{three} and ~\eqref{four}. For any UC family $\mathcal{B} \subseteq \mathcal{P}([n])$ such that $\mathcal{B} \uplus \mathcal{A} = \mathcal{B}$, we arrive at $\mathcal{X}^{\mathcal{B}} \in Y(\mathcal{A},c)$.  
Therefore if $X(\mathcal{A},c) = \emptyset$ this implies there exists no UC family $\mathcal{B} \subseteq \mathcal{P}([n])$ with $\mathcal{B} \uplus \mathcal{A} = \mathcal{B}$  such that:
\begin{equation}
\sum_{S\in \mathcal{B}}\left(\sum_{i \in S}c_i - \sum_{i \notin S}c_i \right) \leq -1. \nonumber
\end{equation}
Since $c_i \in \mathbb{Z}_{\geq 0}$ for all $i \in [n]$, this implies that for each UC family $\mathcal{B} \subseteq \mathcal{P}([n])$ with $\mathcal{B} \uplus \mathcal{A} = \mathcal{B}$, the following inequality holds:
\begin{equation}
\sum_{S\in \mathcal{B}}\left(\sum_{i \in S}c_i - \sum_{i \notin S}c_i \right) \geq 0. \nonumber
\end{equation}
Corollary~\ref{rqz} implies that each UC family $\mathcal{F}$ such that $\mathcal{F} \supseteq \mathcal{A}$, satisfies Frankl's conjecture.
\end{proof}
A natural candidate for checking whether $X(\mathcal{A},c)$ is empty (or not), for some $\mathcal{A}$ and $c$, is a standard branch and bound algorithm. Hence we define an appropriate integer program related to $X(\mathcal{A},c)$ and solve it in a general purpose integer programming solver as specified in the introduction. However in order to prove that a ``candidate" UC family is an FC-family, we need a vector $c$  which yields an empty $X(\mathcal{A},c)$, if such a vector exists. Thus we turn our attention to the relation between $X(\mathcal{A},c)$ and $P^{\mathcal{A}}$, for a given $\mathcal{A}$ and $c$. First we need the following basic definition.
\begin{definition}
A valid inequality $\pi^{T}x \geq \pi_{0}$ for a set $X \subseteq \mathbb{R}^n$ is violated by a vector $\bar{x} \in \mathbb{R}^n$ if and only if $\pi^{T}\bar{x} < \pi_{0}$.
\end{definition}
Given $c \in \mathbb{Z}^n_{\geq 0}$ such that $\sum_{i\in [n]}c_i \geq 1$, we define $\bar{y}$ as $c$ normalized by its $\ell_1$ norm. Thus $\bar{y}= c/\sum_{i\in [n]}c_i$. By definition we arrive at $\bar{y} \in \mathbb{Q}^n_{\geq 0}$ such that $\sum_{i\in [n]}\bar{y}_i = 1$.
\begin{thrm} \label{oracle}
Let $\mathcal{A}$ be a UC family such that $\emptyset \in \mathcal{A}$ and let $c \in \mathbb{Z}^n_{\geq 0}$ such that $\sum_{i\in [n]}c_i \geq 1$. Then $X(\mathcal{A},c)$ is nonempty if and only if there exists a valid inequality of $P^{\mathcal{A}}$ that is violated by $\bar{y}$.
\end{thrm}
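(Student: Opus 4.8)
The plan is to reduce the statement to the single observation that $\bar{y}\in P^{\mathcal{A}}$ if and only if $X(\mathcal{A},c)=\emptyset$, after which the theorem follows because a vector violates some valid inequality of a polyhedron precisely when it lies outside that polyhedron. The crucial computational input is the algebraic identity already used in the proof of Corollary~\ref{rqz} (and Corollary~\ref{weight}): for any family $\mathcal{B}$,
\[
\sum_{S\in\mathcal{B}}\left(\sum_{i\in S}c_i-\sum_{i\notin S}c_i\right)=2\sum_{i\in[n]}c_i|\mathcal{B}_i|-|\mathcal{B}|\sum_{i\in[n]}c_i.
\]
Dividing by $\sum_{i\in[n]}c_i>0$ and recalling $\bar{y}=c/\sum_{i\in[n]}c_i$, the left-hand side is negative exactly when $\sum_{i\in[n]}\bar{y}_i|\mathcal{B}_i|<|\mathcal{B}|/2$, i.e.\ exactly when $\bar{y}$ violates the Poonen inequality associated with $\mathcal{B}$. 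I will also use that the integer points of $X(\mathcal{A},c)$ are precisely the incidence vectors $\mathcal{X}^{\mathcal{B}}$ of UC families $\mathcal{B}$ with $\mathcal{B}\uplus\mathcal{A}=\mathcal{B}$ that in addition satisfy the WV inequality $\sum_{S\in\mathcal{B}}(\sum_{i\in S}c_i-\sum_{i\notin S}c_i)\leq -1$; this is the reading of Inequalities~\eqref{one}--\eqref{four} under integrality recorded after the definition of $X(\mathcal{A},c)$ and in Proposition~\ref{FranklIntProp}.

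For the forward direction, I would suppose $X(\mathcal{A},c)$ is nonempty and pick $\mathcal{X}^{\mathcal{B}}\in X(\mathcal{A},c)$. Then $\mathcal{B}$ is UC, $\mathcal{B}\uplus\mathcal{A}=\mathcal{B}$, and Inequality~\eqref{two} gives $\sum_{S\in\mathcal{B}}(\sum_{i\in S}c_i-\sum_{i\notin S}c_i)\leq -1<0$. By the identity above this yields $\sum_{i\in[n]}\bar{y}_i|\mathcal{B}_i|<|\mathcal{B}|/2$. The inequality $\sum_{i\in[n]}y_i|\mathcal{B}_i|\geq|\mathcal{B}|/2$ is one of the defining constraints of $P^{\mathcal{A}}$, hence valid for $P^{\mathcal{A}}$, and it is violated by $\bar{y}$, which produces the required valid inequality.

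For the converse, I would suppose some valid inequality of $P^{\mathcal{A}}$ is violated by $\bar{y}$. Every point of $P^{\mathcal{A}}$ satisfies every valid inequality, so $\bar{y}\notin P^{\mathcal{A}}$. On the other hand $\bar{y}\in\mathbb{Q}^n_{\geq 0}$ with $\sum_{i\in[n]}\bar{y}_i=1$, so $\bar{y}$ already satisfies the normalization and nonnegativity constraints of $P^{\mathcal{A}}$; therefore it must violate one of the remaining constraints, i.e.\ there is a UC family $\mathcal{B}$ with $\mathcal{B}\uplus\mathcal{A}=\mathcal{B}$ and $\sum_{i\in[n]}\bar{y}_i|\mathcal{B}_i|<|\mathcal{B}|/2$. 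Multiplying by $\sum_{i\in[n]}c_i$ and invoking the identity gives $\sum_{S\in\mathcal{B}}(\sum_{i\in S}c_i-\sum_{i\notin S}c_i)<0$. Since $c\in\mathbb{Z}^n_{\geq 0}$, this sum is an integer, so strict negativity upgrades to $\sum_{S\in\mathcal{B}}(\sum_{i\in S}c_i-\sum_{i\notin S}c_i)\leq -1$, which is exactly Inequality~\eqref{two}. Hence $\mathcal{X}^{\mathcal{B}}$ satisfies Inequalities~\eqref{one}--\eqref{four} and $X(\mathcal{A},c)$ is nonempty.

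I expect the main subtlety to be this last integer-rounding step, which is precisely where the hypothesis $c\in\mathbb{Z}^n_{\geq 0}$ and the constant $+1$ in Inequality~\eqref{two} are needed; everything else is a dictionary between the WV formulation and the Poonen formulation. It is worth noting that no separating-hyperplane argument is required: the forward direction exhibits an explicit defining inequality of $P^{\mathcal{A}}$, and the converse only uses the trivial fact that membership in $P^{\mathcal{A}}$ implies satisfaction of all valid inequalities, so the case $P^{\mathcal{A}}=\emptyset$ needs no special treatment.
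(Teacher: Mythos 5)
Your proof is correct and follows essentially the same route as the paper: the forward direction is identical (an element $\mathcal{X}^{\mathcal{B}}\in X(\mathcal{A},c)$ yields, via the identity $\sum_{S\in\mathcal{B}}(\sum_{i\in S}c_i-\sum_{i\notin S}c_i)=2\sum_{i}c_i|\mathcal{B}_i|-|\mathcal{B}|\sum_i c_i$, a defining constraint of $P^{\mathcal{A}}$ violated by $\bar{y}$), and your converse is just the contrapositive of the paper's (the paper shows $X(\mathcal{A},c)=\emptyset\Rightarrow\bar{y}\in P^{\mathcal{A}}$ via Proposition~\ref{FranklIntProp} and Corollary~\ref{rqz}, whereas you directly extract a violated Poonen constraint and round the integer quantity down to $-1$ to build a point of $X(\mathcal{A},c)$). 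Your explicit handling of the integer-rounding step and of the case $P^{\mathcal{A}}=\emptyset$ is sound and, if anything, slightly more self-contained than the paper's appeal to earlier proofs.
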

\begin{proof}

Suppose $X(\mathcal{A},c)$ is nonempty. Hence there exists  $\bar{x} \in X(\mathcal{A},c)$ such that $\bar{x}= \mathcal{X}^{\mathcal{B}}$ for some $\mathcal{B} \subseteq \mathcal{P}([n])$. $\mathcal{B}$ is a UC family since the corresponding UC inequalities are satisfied. Furthermore, for each $B \in \mathcal{B}$ and for each $A \in \mathcal{A}$, it follows that $A \cup B \in \mathcal{B}$ since all the corresponding FS inequalities are satisfied. Hence we see that $\mathcal{B} \uplus \mathcal{A}= \mathcal{B}$. Therefore $\mathcal{B}$ yields the coefficients (and the right-hand side scalar) of the following valid inequality for $P^{\mathcal{A}}$, 
\begin{equation}
 \sum_{i\in [n]}y_i|\mathcal{B}_i|\geq  |\mathcal{B}|/2 \nonumber.
\end{equation} 
Since $\mathcal{X}^{\mathcal{B}}\in X(\mathcal{A},c)$ implies the WV inequality is satisfied, we arrive at the following,
\begin{equation}
\sum_{S\in \mathcal{B}}\left(\sum_{i \in S}c_i - \sum_{i \notin S}c_i \right) \leq -1 \nonumber.
\end{equation}
Combining the above with the proof of Corollary~\ref{integer} we arrive at the following inequality,
\begin{equation}
2\sum_{i\in [n]}c_i|\mathcal{B}_i| - |\mathcal{B}|\sum_{i \in [n]}c_i \leq -1 \nonumber.
\end{equation}
Adding $|\mathcal{B}|\sum_{i \in [n]}c_i$ to both sides of the above and dividing by $2\sum_{i\in [n]}c_i$, we arrive at 
\begin{equation}
\sum_{i \in [n]}\frac{c_i}{\sum_{i \in [n]}c_i}|\mathcal{B}_i| \leq \frac{-1}{2\sum_{i \in [n]}c_i} + \frac{|\mathcal{B}|}{2}\nonumber
\end{equation}
and because $\frac{-1}{2\sum_{i \in [n]}c_i} < 0$, and $\frac{c_i}{\sum_{i \in [n]}c_i}=\bar{y}_i$ for each $i \in [n]$, it follows that
\begin{equation}
\sum_{i \in [n]}\bar{y}_i|\mathcal{B}_i| < |\mathcal{B}|/2\nonumber.
\end{equation}
For the other direction, suppose $X(\mathcal{A},c) = \emptyset$. Following the proof of Proposition~\ref{FranklIntProp}  we see that for each $\mathcal{B} \subseteq \mathcal{P}([n])$ such that $\mathcal{B} \uplus \mathcal{A} = \mathcal{B}$, the following inequality holds:
\begin{equation}
\sum_{S\in \mathcal{B}}\left(\sum_{i \in S}c_i - \sum_{i \notin S}c_i \right) \geq 0. \nonumber
\end{equation}
Hence, Corollary~\ref{rqz} implies that $\bar{y} \in P^{\mathcal{A}}$.
\end{proof}
We determined that a nonempty $X(\mathcal{A},c)$ implies a violated inequality for $P^{\mathcal{A}}$. However for a given $\mathcal{A}$ and $c$, there may be many such violated inequalities. This leads to the notion of a maximally violated inequality, which we define below. This notion is based on the intuition that a maximally violated inequality is ``farthest" away from $P^{\mathcal{A}}$, and hence adding it to a subset of the constraints of $P^{\mathcal{A}}$ should get us ``closest" to $P^{\mathcal{A}}$.\footnote{Indeed, from a computational perspective, for all the tested UC families in this paper, using this notion for the objective function of $IP(\mathcal{A},c)$ leads to the fewest number of iterations of Algorithm~\ref{row generation}, where we use $IP(\mathcal{A},c)$ instead of $X(\mathcal{A},c)$.}
\begin{definition}
Let $\mathcal{A},\mathcal{B} \subseteq \mathcal{P}([n])$, be UC families such $\mathcal{B} \uplus \mathcal{A} = \mathcal{B}$ and $\emptyset \in \mathcal{A}$. A valid inequality $\sum_{i\in [n]}{y}_i|\mathcal{B}_i|\geq  |\mathcal{B}|/2$ for $P^{\mathcal{A}}$ is maximally violated by a vector $\bar{y} \in \mathbb{Q}^n_{\geq 0}$ such that $\sum_{i\in [n]}\bar{y}_i = 1$ if and only if  for each violated valid inequality  $\sum_{i\in [n]}y_i|\mathcal{D}_i|\geq  |\mathcal{D}|/2$ such that $\mathcal{D} \subseteq \mathcal{P}([n])$ is a UC family and $\mathcal{D} \uplus \mathcal{A} = \mathcal{D}$, the following inequalities $(|\mathcal{B}|/2 - \sum_{i\in [n]}\bar{y}_i|\mathcal{B}_i| ) \geq (|\mathcal{D}|/2 - \sum_{i\in [n]}\bar{y}_i|\mathcal{D}_i|) > 0$ hold.
\end{definition}
Let $\mathcal{A}$ be a UC family such that $\emptyset \in \mathcal{A}$. Furthermore, let $c \in \mathbb{Z}^n_{\geq 0}$ such that $\sum_{i\in [n]}c_i \geq 1$. Denote by $IP(\mathcal{A},c)$ the following integer program:
\begin{align*}\
\max & \sum_{i\in [n]}c_i \left(\sum_{S \in \mathcal{P}([n])}x_S - 2\sum_{S \in \mathcal{P}([n]): i \in S}x_S\right) &\\
\textrm{s.t. } & x \in X(\mathcal{A},c)
\end{align*}\\
An integer vector $\bar{x} \in \mathbb{R}^{2^n}$ is a \emph{feasible} solution of $IP(\mathcal{A},c)$ if and only if $\bar{x} = \mathcal{X}^{\mathcal{B}}$ for some UC family $\mathcal{B} \subseteq \mathcal{P}([n])$ such that $\mathcal{B} \uplus \mathcal{A} = \mathcal{B}$ and $\mathcal{X}^{\mathcal{B}}$ satisfies the WV inequality. $IP(\mathcal{A},c)$ is \emph{infeasible} if and only if there exists no feasible solution of $IP(\mathcal{A},c)$.
$\mathcal{X}^{\mathcal{B}}$ is an \emph{optimal} solution of $IP(\mathcal{A},c)$ if and only if $\mathcal{X}^{\mathcal{B}}$ is a feasible solution of $IP(\mathcal{A},c)$, and for any other feasible solution $\mathcal{X}^{\mathcal{D}}$ of $IP(\mathcal{A},c)$, we arrive at
\begin{equation}
\sum_{S\in \mathcal{B}}\sum_{i \in [n]}c_i - 2\sum_{S\in \mathcal{B}}\sum_{i \in S}c_i \geq \sum_{S\in \mathcal{D}}\sum_{i \in [n]}c_i - 2\sum_{S\in \mathcal{D}}\sum_{i \in S}c_i \nonumber.
\end{equation}
\begin{thrm}
Let $\mathcal{A}$ be a UC family such that $\emptyset \in \mathcal{A}$, and let $c \in \mathbb{Z}^n_{\geq 0}$ such that $\sum_{i\in [n]}c_i \geq 1$. Suppose $\mathcal{X}^{\mathcal{B}}$ is an optimal solution of $IP(\mathcal{A},c)$. Then the valid inequality $\sum_{i\in [n]}y_i|\mathcal{B}_i|\geq  |\mathcal{B}|/2$ for $P^{\mathcal{A}}$ is maximally violated by $\bar{y}$.
\end{thrm}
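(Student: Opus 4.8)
The plan is to exploit the fact that, up to a fixed positive scalar, the objective of $IP(\mathcal{A},c)$ measures exactly the violation of the associated Poonen inequality at $\bar{y}$, and that the feasible region of $IP(\mathcal{A},c)$ is precisely the collection of families whose inequality is violated by $\bar{y}$. Writing $C := \sum_{i\in[n]}c_i$, I would first evaluate the objective on an incidence vector $\mathcal{X}^{\mathcal{D}}$. Since $\sum_{S\in\mathcal{P}([n])}\mathcal{X}^{\mathcal{D}}_S = |\mathcal{D}|$ and $\sum_{S : i\in S}\mathcal{X}^{\mathcal{D}}_S = |\mathcal{D}_i|$, the objective collapses to $C|\mathcal{D}| - 2\sum_{i\in[n]}c_i|\mathcal{D}_i|$. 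Because $\bar{y}_i = c_i/C$, this equals $2C\big(|\mathcal{D}|/2 - \sum_{i\in[n]}\bar{y}_i|\mathcal{D}_i|\big)$, i.e. exactly $2C$ times the violation amount of $\sum_{i\in[n]}y_i|\mathcal{D}_i| \geq |\mathcal{D}|/2$ at $\bar{y}$. As $2C > 0$, maximizing the objective is the same as maximizing the violation.

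Next I would pin down the feasible region. A feasible $\mathcal{X}^{\mathcal{D}}$ corresponds to a UC family $\mathcal{D}$ with $\mathcal{D}\uplus\mathcal{A}=\mathcal{D}$ that satisfies the WV inequality. Reusing the algebra from the proof of Corollary~\ref{weight}, the WV inequality reads $2\sum_{i\in[n]}c_i|\mathcal{D}_i| - |\mathcal{D}|C \leq -1$. Since all $c_i$ are integers, the left-hand side is an integer, so $\leq -1$ is equivalent to $< 0$, which in turn is equivalent to $\sum_{i\in[n]}\bar{y}_i|\mathcal{D}_i| < |\mathcal{D}|/2$; that is, the inequality is violated by $\bar{y}$. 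Hence the feasible families of $IP(\mathcal{A},c)$ are exactly the families $\mathcal{D}$ (UC, $\mathcal{D}\uplus\mathcal{A}=\mathcal{D}$) whose valid inequality is strictly violated by $\bar{y}$ — precisely the inequalities ranged over in the definition of ``maximally violated.'' This is a quantitative sharpening of Theorem~\ref{oracle}.

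To finish, I would invoke optimality. Since $\mathcal{B}$ is in particular feasible, its violation is strictly positive, giving $|\mathcal{B}|/2 - \sum_{i\in[n]}\bar{y}_i|\mathcal{B}_i| > 0$. For any competing violated valid inequality, its family $\mathcal{D}$ is also feasible for $IP(\mathcal{A},c)$, so optimality of $\mathcal{B}$ yields $C|\mathcal{B}| - 2\sum_{i\in[n]}c_i|\mathcal{B}_i| \geq C|\mathcal{D}| - 2\sum_{i\in[n]}c_i|\mathcal{D}_i|$; dividing by $2C$ converts this into $|\mathcal{B}|/2 - \sum_{i\in[n]}\bar{y}_i|\mathcal{B}_i| \geq |\mathcal{D}|/2 - \sum_{i\in[n]}\bar{y}_i|\mathcal{D}_i|$. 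Combining the two displays produces exactly the chain $(|\mathcal{B}|/2 - \sum_{i\in[n]}\bar{y}_i|\mathcal{B}_i|) \geq (|\mathcal{D}|/2 - \sum_{i\in[n]}\bar{y}_i|\mathcal{D}_i|) > 0$ required by the definition of maximal violation.

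The hard part is not any single calculation but the equivalence in the middle step: the integer program uses the strengthened WV inequality with right-hand side $-1$, whereas the definition of maximal violation ranges over all valid inequalities of $P^{\mathcal{A}}$ that are merely violated (strict negativity). Matching these two sets exactly is what makes the theorem work, and it hinges entirely on the integrality of $c$, which collapses $\leq -1$ and $< 0$ into the same condition. Everything else is linear bookkeeping that rewrites the objective as a fixed positive multiple of the violation.
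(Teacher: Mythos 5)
Your proof is correct and follows essentially the same route as the paper's: rewrite the objective of $IP(\mathcal{A},c)$ as $2\sum_i c_i$ times the violation $|\mathcal{D}|/2 - \sum_i \bar{y}_i|\mathcal{D}_i|$ and then read off the required chain of inequalities from optimality. The one place you go beyond the paper's write-up is in explicitly verifying (via integrality of $c$, collapsing $\leq -1$ and $<0$) that the feasible solutions of $IP(\mathcal{A},c)$ are \emph{exactly} the violated valid inequalities, so that the universal quantifier in the definition of maximal violation is fully covered; the paper leaves this implicit by citing the proofs of Corollary~\ref{integer} and Theorem~\ref{oracle}.
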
 
\begin{proof}
Suppose $\mathcal{X}^{\mathcal{B}}$ is an optimal solution of $IP(\mathcal{A},c)$. Then the following inequality holds:
\begin{equation}
\sum_{S\in \mathcal{B}}\left(\sum_{i \in S}c_i - \sum_{i \notin S}c_i \right) \leq -1 \nonumber.
\end{equation}
Following the proof of Corollary~\ref{integer} we arrive the following:
\begin{equation}
\sum_{S\in \mathcal{B}}\sum_{i \in [n]}c_i - 2\sum_{S\in \mathcal{B}}\sum_{i \in S}c_i \geq 1 \nonumber.
\end{equation}
Suppose $\mathcal{X}^{\mathcal{D}}$ is a feasible solution of $IP(\mathcal{A},c)$. Then the following holds:
\begin{equation}
\sum_{S\in \mathcal{B}}\sum_{i \in [n]}c_i - 2\sum_{S\in \mathcal{B}}\sum_{i \in S}c_i \geq \sum_{S\in \mathcal{D}}\sum_{i \in [n]}c_i - 2\sum_{S\in \mathcal{D}}\sum_{i \in S}c_i \geq 1\nonumber.
\end{equation}
Rewriting the inequalities above as in the proof of Corollary~\ref{integer} combined with the proof of Theorem~\ref{oracle} we arrive at
\begin{equation}
\frac{|\mathcal{B}|}{2}-\sum_{i \in [n]}\frac{c_i}{\sum_{i \in [n]}c_i}|\mathcal{B}_i|\geq \frac{|\mathcal{D}|}{2}-\sum_{i \in [n]} \frac{c_i}{\sum_{i \in [n]}c_i}|\mathcal{D}_i|  \geq \frac{1}{2\sum_{i \in [n]}c_i} \nonumber.
\end{equation}
Finally, this implies that the following holds:
\begin{equation}
 (|\mathcal{B}|/2 - \sum_{i\in [n]}\bar{y}_i|\mathcal{B}_i| ) \geq (|\mathcal{D}|/2 - \sum_{i\in [n]}\bar{y}_i|\mathcal{D}_i|) > 0 \nonumber.
\end{equation}

\end{proof}
Given a UC family $\mathcal{A}$, the following algorithm finds a rational vector that satisfies the second condition of Poonen's Theorem, or an infeasible subset of the constraints that define $P^{\mathcal{A}}$. The former proves that $\mathcal{A}$ is FC, whereas the latter proves that $\mathcal{A}$ is Non\textendash FC. Using Proposition~\ref{equivalence} with appropriate adjustments in the algorithm below we may search for an infeasible subset of the constraints that define $I^{\mathcal{A}}$ instead of $P^{\mathcal{A}}$. Furthermore, we may use $IP(\mathcal{A},c)$ instead of $X(\mathcal{A},c)$. For a given vector $\bar{y} \in \mathbb{Q}^n_{\geq 0}$ such that $\bar{y}=(\frac{a_1}{b_1}, \frac{a_2}{b_2}, \ldots, \frac{a_n}{b_n})$, we safely assume that $b_i \geq 1$ for all $i \in [n]$.

\vspace{5mm}

\begin{algorithm}[H]\label{row generation}
 \SetKwInOut{Input}{Input}\SetKwInOut{Output}{Output}
 \Input{A UC family $\mathcal{A}$ such that $U(\mathcal{A})=[n]$ and $\emptyset \in \mathcal{A}$}
 \Output{$\mathcal{A}$ is an FC-family, or $\mathcal{A}$ is a Non\textendash FC-family}
 $H \leftarrow \left( \sum_{i\in [n]}y_i = 1, \ y_i \geq 0 \ \forall i \in [n] \right)$ \DontPrintSemicolon

 \While{$\exists \ \bar{y} \in H$ such that $\bar{y}=(\frac{a_1}{b_1}, \frac{a_2}{b_2}, \ldots, \frac{a_n}{b_n}) \in \mathbb{Q}^n_{\geq 0}$}{
  
   $g \leftarrow lcm(b_1,b_2, \ldots,b_n)$ \; \DontPrintSemicolon
   $c \leftarrow g\bar{y}$ \; \DontPrintSemicolon
  \If {$\exists \ \mathcal{X}^{\mathcal{B}} \in X(\mathcal{A},c)$}{
  $H \leftarrow H \cap \left(\sum_{i\in [n]}y_i|\mathcal{B}_i| \geq |\mathcal{B}|/2 \right)$\

  }
  \Else{
  \Return{} $\ \mathcal{A}$ is an FC-family
  }
}

\Return{}$ \ \mathcal{A}$ is a Non\textendash FC-family\
 \caption{Cutting planes for FC-families}
\end{algorithm}
\vspace{3mm}
\begin{thrm}
Let $\mathcal{A}$ be a UC family such that $U(\mathcal{A})=[n]$ and $\emptyset \in \mathcal{A}$. Then Algorithm \ref{row generation} correctly determines if $\mathcal{A}$ is an FC-family or Non\textendash FC-family.
\end{thrm}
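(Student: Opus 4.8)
The plan is to split the statement into two partial-correctness claims (each returned answer is the correct one) together with a termination argument, and throughout to exploit the invariant that the working polyhedron $H$ is always a rational \emph{relaxation} of $P^{\mathcal{A}}$. Its defining constraints are the simplex constraints $\sum_{i\in[n]}y_i=1$, $y_i\geq 0$ together with a finite collection of inequalities of the form $\sum_{i\in[n]}y_i|\mathcal{B}_i|\geq|\mathcal{B}|/2$. Such an inequality is added only after a solution $\mathcal{X}^{\mathcal{B}}\in X(\mathcal{A},c)$ is found, and the forward direction of Theorem~\ref{oracle} guarantees that the corresponding $\mathcal{B}$ is a UC family with $\mathcal{B}\uplus\mathcal{A}=\mathcal{B}$; hence every added inequality is valid for $P^{\mathcal{A}}$, so $P^{\mathcal{A}}\subseteq H$ holds at every stage. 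All coefficients being rational, $H$ is a rational polyhedron at every stage.

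For the \emph{FC-family} output, the algorithm stops inside the loop precisely when $X(\mathcal{A},c)=\emptyset$ for the integer vector $c=g\bar{y}$. Since $\bar{y}\in\mathbb{Q}^n_{\geq 0}$ with $\sum_{i\in[n]}\bar{y}_i=1$, the scaling yields $c\in\mathbb{Z}^n_{\geq 0}$ with $\sum_{i\in[n]}c_i=g\geq 1$, so Proposition~\ref{FranklIntProp} applies and certifies that $\mathcal{A}$ is FC. For the \emph{Non--FC-family} output, the loop exits because $H$ contains no rational vector; as $H$ is a rational polyhedron, Proposition~\ref{FM} forces $H=\emptyset$. Combined with the relaxation invariant $P^{\mathcal{A}}\subseteq H$, this gives $P^{\mathcal{A}}=\emptyset$, and Corollary~\ref{rqz} (equivalently Poonen's Theorem) then shows that $\mathcal{A}$ is Non--FC.

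The hard part will be termination, since a priori the loop could generate cuts indefinitely. Here I would argue as follows. In any iteration that does not return \emph{FC}, a solution $\mathcal{X}^{\mathcal{B}}\in X(\mathcal{A},c)$ is found, and the computation in the proof of Theorem~\ref{oracle} shows that the chosen $\bar{y}$ satisfies $\sum_{i\in[n]}\bar{y}_i|\mathcal{B}_i|<|\mathcal{B}|/2$; that is, $\bar{y}$ violates the very inequality just appended to $H$. But $\bar{y}$ was selected from $H$ and hence satisfies all inequalities currently in $H$, so the new inequality cannot already be present: each cut-adding iteration strictly enlarges the constraint set of $H$ with a genuinely new inequality. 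Finally, every admissible cut has the form $\sum_{i\in[n]}y_i|\mathcal{B}_i|\geq|\mathcal{B}|/2$ for some UC family $\mathcal{B}\subseteq\mathcal{P}([n])$ with $\mathcal{B}\uplus\mathcal{A}=\mathcal{B}$, and there are at most $2^{2^n}$ such families, so only finitely many distinct cuts can ever be added. Consequently the loop runs for finitely many iterations before it either returns \emph{FC} or produces an empty $H$ and returns \emph{Non--FC}, which together with the partial-correctness arguments above establishes the theorem.
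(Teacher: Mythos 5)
Your proof is correct and uses the same ingredients as the paper's (the invariant $P^{\mathcal{A}}\subseteq H$, Proposition~\ref{FM}, Proposition~\ref{FranklIntProp}, Theorem~\ref{oracle}, and Poonen's Theorem via Corollary~\ref{rqz}), but it is organized in the dual direction and is more complete in one respect. The paper argues forward from the true status of $\mathcal{A}$: if $\mathcal{A}$ is FC then $P^{\mathcal{A}}\neq\emptyset$, so $H$ can never become empty and the algorithm must eventually hit an empty $X(\mathcal{A},c)$; if $\mathcal{A}$ is Non--FC then by Theorem~\ref{oracle} no iterate $\bar{y}$ ever lies in $P^{\mathcal{A}}$, so the algorithm can only stop with $H=\emptyset$. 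You instead prove partial correctness of each exit (whichever answer is returned is right) and then supply a separate termination argument. The two decompositions are logically equivalent given termination and the mutual exclusivity of the two outputs, so nothing is lost. What your version buys is an actual proof of termination: the paper dismisses this with ``it is clear that Algorithm~\ref{row generation} finitely terminates,'' whereas you observe that each cut-adding iteration appends an inequality violated by the current $\bar{y}\in H$ (hence not already among the constraints defining $H$), and that only finitely many inequalities of the form $\sum_{i\in[n]}y_i|\mathcal{B}_i|\geq|\mathcal{B}|/2$ with $\mathcal{B}\subseteq\mathcal{P}([n])$ exist. This is exactly the argument the paper leaves implicit, and making it explicit is a genuine improvement.
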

\begin{proof}
It is clear Algorithm~\ref{row generation} finitely terminates. Furthermore, if $H$ is nonempty, then by Proposition~\ref{FM} it contains a rational vector.
Let $\mathcal{A}$ be a UC family such that $U(\mathcal{A})=[n]$ and $\emptyset \in \mathcal{A}$. Suppose $\mathcal{A}$ is an FC-family. By the definition of an FC-family and by Poonen's Theorem there exist $c_i \geq 0$ for all $i \in [n]$, such that $\sum_{i \in [n]}c_i = 1$, which satisfy all Inequalities~\eqref{poon}. Therefore $P^{\mathcal{A}}$ is nonempty and consequently $H$ is nonempty. This implies that at some iteration of Algorithm~\ref{row generation}, by Theorem~\ref{oracle} we arrive at $\bar{y} \in P^{\mathcal{A}}$, otherwise Algorithm~\ref{row generation} determines an infeasible system of constraints that defines $H$ and we arrive at a contradiction.
Suppose $\mathcal{A}$ is a Non\textendash FC-family. By the definition of a Non\textendash FC-family and Poonen's Theorem, this implies there exist no $c_i \geq 0$ for all $i \in [n]$ with $\sum_{i \in [n]}c_i = 1$ that satisfy all Inequalities~\eqref{poon}. By Theorem~\ref{oracle} during all the iterations of Algorithm~\ref{row generation} we have that $\bar{y} \not \in P^{\mathcal{A}}$, otherwise we arrive at a contradiction. Therefore Algorithm~\ref{row generation} terminates when it determines a system of constraints that define $H$ such that $H = \emptyset$, which implies that $P^{\mathcal{A}} = \emptyset$.
\end{proof}
Algorithm~\ref{row generation} becomes our main tool for determining whether certain UC families are FC or Non\textendash FC. This in turn allows us to answer other questions of interest. In the next section we narrow our focus on valid inequalities for $IP(\mathcal{A},c)$. Our interest in these is mainly \emph{practical}, since solving $IP(\mathcal{A},c)$ in a general purpose integer programming solver is how we determine if $X(\mathcal{A},c)$ is empty or not.
\section{Valid Inequalities for $X(\mathcal{A},c)$}\label{sec:Inequalities}
From the perspective of computational integer programming, valid inequalities may be considered effective if\textemdash among other things\textemdash they lead to a smaller branch and bound tree. For all the results that we feature in this paper, adding a subset of the following inequalities to the root node of a given instance of $IP(\mathcal{A},c)$ significantly reduces the size of the resulting branch and bound tree. This is particularly important in the implementation of Algorithm~\ref{row generation} which features $IP(\mathcal{A},c)$. Since the algorithm may iterate many times, speeding up the solution process of $IP(\mathcal{A},c)$ becomes crucial. Once Algorithm~\ref{row generation} determines whether a given $\mathcal{A}$ is an FC or Non\textendash FC-family, separate rounds of verifications take place in a number of different solvers as mentioned in the introduction. If the given family $\mathcal{A}$ is FC, then automated verifications are carried out in an exact rational solver \cite{Exact} and VIPR \cite{VIPR} which do not make use of the following inequalities, thus allowing for, if necessary, a straightforward check of the input files.\footnote{This is important because it means that the interested reader does not need to rely on the implementation of Algorithm~\ref{row generation}, and in particular the generation of FC-chain inequalities, in order to computationally reproduce the results featured in this paper. To check that a given $\mathcal{A}$ is FC, a reader simply needs the correct weight vector and a solver of choice. For a Non\textendash FC-family, the reader needs the UC families which yield the infeasible system of inequalities and the Farkas duals.} 

In the next definition, we may assume that $U(\mathcal{S})=U(\mathcal{F})=U(\mathcal{A})=[n]$, for some positive integer $n$.
\begin{definition} A family of sets $\mathcal{S}$ generates $\mathcal{F}$ with a UC family $\mathcal{A}$, denoted by $\gen{\mathcal{S}}_{\mathcal{A}}:=\mathcal{F}$, if and only if $\mathcal{F}$ is a UC family that contains $\mathcal{S}$ such that $\mathcal{F} \uplus \mathcal{A} = \mathcal{F}$, and there exists no UC family $\widetilde{\mathcal{F}}\subset\mathcal{F}$ such that $\widetilde{\mathcal{F}}$ contains $\mathcal{S}$ and $\widetilde{\mathcal{F}} \uplus \mathcal{A} = \widetilde{\mathcal{F}}$.
\end{definition}
 As in the previous Section, for all UC families $\mathcal{A}$ that are ``candidate" FC-families in the following propositions and definition, we assume that $U(\mathcal{A})=[n]$, for some integer $n\geq 1$.
\begin{prop}[FC inequalities] \label{FCineq}
Let $\mathcal{A}$ be a UC family such that $\emptyset \in \mathcal{A}$, and let $c \in \mathbb{Z}^n_{\geq 0}$ such that $\sum_{i\in [n]}c_i \geq 1$. Let $S \in \mathcal{A}$, and let $U,T \in \mathcal{P}([n])$ such that $S \cup U = F$ and $S \cup T = F$. Then the following
\begin{equation}
x_T+x_U - x_{T\cup U} - x_F \leq 0 \nonumber,
\end{equation}
is valid for $X(\mathcal{A},c)$.
\end{prop}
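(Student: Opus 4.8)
The plan is to verify the inequality pointwise at every integer vector of $X(\mathcal{A},c)$. By the box constraints~\eqref{four} together with integrality, every $\bar{x}\in X(\mathcal{A},c)$ is a $0/1$ vector, so $\bar{x}=\mathcal{X}^{\mathcal{B}}$ for a UC family $\mathcal{B}\subseteq\mathcal{P}([n])$ with $\mathcal{B}\uplus\mathcal{A}=\mathcal{B}$. Reading $\bar{x}_R$ as the indicator of the event $R\in\mathcal{B}$, the claimed inequality becomes the purely combinatorial assertion that $[T\in\mathcal{B}]+[U\in\mathcal{B}]\le[T\cup U\in\mathcal{B}]+[F\in\mathcal{B}]$, which I would establish by a short case analysis on the pair $(\bar{x}_T,\bar{x}_U)\in\{0,1\}^2$.

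The two structural facts I would feed into the case analysis are both already known to hold on $X(\mathcal{A},c)$. First, since $F=S\cup T=S\cup U$ with $S\in\mathcal{A}$, the Fixed-Set inequalities~\eqref{three}, applied with the family member $S$ playing the role of $A$ and with the sets $T$ and $U$ respectively, give $\bar{x}_T\le\bar{x}_{S\cup T}=\bar{x}_F$ and $\bar{x}_U\le\bar{x}_{S\cup U}=\bar{x}_F$; that is, membership of either $T$ or $U$ in $\mathcal{B}$ forces $F\in\mathcal{B}$. Second, the UC inequality~\eqref{one} applied to the pair $T,U$ gives $\bar{x}_T+\bar{x}_U\le 1+\bar{x}_{T\cup U}$, so that $T,U\in\mathcal{B}$ jointly force $T\cup U\in\mathcal{B}$.

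With these in hand the case analysis is immediate. If $\bar{x}_T=\bar{x}_U=1$, then~\eqref{one} yields $\bar{x}_{T\cup U}=1$ and~\eqref{three} yields $\bar{x}_F=1$, so the left-hand side equals $1+1-1-1=0$. If exactly one of $\bar{x}_T,\bar{x}_U$ equals $1$, the corresponding Fixed-Set inequality forces $\bar{x}_F=1$, so the left-hand side reduces to $1-\bar{x}_{T\cup U}-1=-\bar{x}_{T\cup U}\le 0$. Finally, if $\bar{x}_T=\bar{x}_U=0$ the left-hand side is $-\bar{x}_{T\cup U}-\bar{x}_F\le 0$. In every case the inequality holds, establishing validity for $X(\mathcal{A},c)$.

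The point worth flagging is that integrality is genuinely essential here, and this is what I expect to be the conceptual obstacle to any slicker argument: the three relevant inequalities---namely~\eqref{one} for the pair $T,U$ and~\eqref{three} for $S$ with each of $T,U$---do \emph{not} imply the FC inequality over the reals. For instance, the assignment $\bar{x}_T=\bar{x}_U=\bar{x}_F=0.6$ and $\bar{x}_{T\cup U}=0.2$ satisfies all of them yet makes the left-hand side equal $0.4>0$. Hence the inequality cannot be recovered as a nonnegative combination of the defining constraints and must instead be argued from the $0/1$ structure, as above; this separation from the relaxation is precisely what makes it useful as a cutting plane for $IP(\mathcal{A},c)$.
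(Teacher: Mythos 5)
Your proof is correct and takes essentially the same approach as the paper's: both arguments rest on the two observations that the FS inequalities force $x_F=1$ whenever $x_T=1$ or $x_U=1$, and the UC inequality forces $x_{T\cup U}=1$ when both hold, the only difference being that the paper phrases this as a contradiction from assuming the left-hand side is at least $1$ while you run a direct case analysis on $(\bar{x}_T,\bar{x}_U)$. Your closing fractional counterexample showing the inequality is not implied by the LP relaxation is a correct and worthwhile observation that the paper does not make explicit.
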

\begin{proof}
Suppose there exists an integer vector in $X(\mathcal{A},c)$ which yields a UC family $\mathcal{F}$ such that the following inequality holds (for some $S \in \mathcal{A}$ and $U,T \in \mathcal{P}([n])$ as above)
\begin{equation}
x_T+x_U - x_{T\cup U} - x_F \geq 1 \nonumber.
\end{equation}
This implies that the number of variables which equal one with positive coefficients is greater than the number of variables with negative coefficients which equal one. But if either $x_T$ or $x_U$ are one then $x_F$ is one (if both are one then $x_{T\cup U}$ is one) and we arrive at a contradiction. 
\end{proof}
In the following definition the role of a considered UC family $\mathcal{A}$ is taken into account in the listed conditions. In the first condition the role of $\mathcal{A}$ is implicit in the existence of a FS inequality, whereas in the second condition the role of $\mathcal{A}$ is implicit in \emph{generating} the desired family, as discussed at the beginning of this section.
\begin{definition}[FC-chain] \label{fcchain}
Let $\mathcal{A}$ be a UC family such that $\emptyset \in \mathcal{A}$, and let $c \in \mathbb{Z}^n_{\geq 0}$ such that $\sum_{i\in [n]}c_i \geq 1$. Let $\mathcal{S},\mathcal{S}'\subset \mathcal{P}([n])$, $\mathcal{S} \cap \mathcal{S}' = \emptyset$. Given $B_i \in \mathcal{S}, B_j \in \mathcal{S}'$, we say $B_i,B_j$ form an FC-chain which we denote by $B_i \longrightarrow B_j$, if and only if there exist tuples $(B_i,B_k),(B_k,B_l),(B_l,B_m), \ldots ,(B_p,B_j)$, where $\left\{B_k,B_l, \ldots ,B_p\right\} \subset \mathcal{P}([n])$, such that for any tuple $(B_q,B_r)$ in the FC-chain, at least one of the following conditions holds:
\begin{enumerate}
\item There exists $A \in \mathcal{A}$ such that $A \cup B_q = B_r$, and therefore $x_{B_q} \leq x_{B_{r}}$ is a valid FS inequality for $X(\mathcal{A},c)$.
\item There exists $S \in \gen{\mathcal{S}}_{\mathcal{A}}$ such that $x_{B_q} + x_{S} \leq 1 + x_{B_{r}}$ is a valid UC inequality for $X(\mathcal{A},c)$.
\end{enumerate}
\end{definition}
The following proposition follows directly from the definition above.
\begin{prop}[FC-chain inequalities] \label{FCineq}
Let $\mathcal{A}$ be a UC family such that $\emptyset \in \mathcal{A}$, and let $c \in \mathbb{Z}^n_{\geq 0}$ such that $\sum_{i\in [n]}c_i \geq 1$. Let $\mathcal{S},\mathcal{S}'\subset \mathcal{P}([n])$, $\mathcal{S} \cap \mathcal{S}' = \emptyset $. For any $\mathcal{T} \subseteq \mathcal{S}$ define $\mathcal{U}(\mathcal{T}):=\left\{S' \in \mathcal{S}' \ | \ \exists \ S \in \mathcal{T}: S \longrightarrow S'\right\}$. Suppose that $|\mathcal{T}| \leq |\mathcal{U}(\mathcal{T})|$ for all $\mathcal{T} \subseteq \mathcal{S}$. Then the
inequality
\begin{equation}
\sum_{S \in \mathcal{S}}x_S - \sum_{S \in \mathcal{S}'}x_S \leq 0 \nonumber,
\end{equation}
is valid for $X(\mathcal{A},c)$.
\end{prop}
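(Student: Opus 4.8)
The plan is to verify validity directly at an arbitrary integer point of $X(\mathcal{A},c)$. Such a point is the incidence vector $\mathcal{X}^{\mathcal{B}}$ of a family $\mathcal{B}\subseteq\mathcal{P}([n])$ that is UC and satisfies $\mathcal{B}\uplus\mathcal{A}=\mathcal{B}$ (by Inequalities~\eqref{one} and~\eqref{three}), so the claimed inequality reads simply $|\mathcal{S}\cap\mathcal{B}|\le|\mathcal{S}'\cap\mathcal{B}|$. The hypothesis $|\mathcal{T}|\le|\mathcal{U}(\mathcal{T})|$ for all $\mathcal{T}\subseteq\mathcal{S}$ is precisely Hall's deficiency condition for the bipartite ``reachability'' graph on $\mathcal{S}\cup\mathcal{S}'$ whose edges are the pairs with $S\longrightarrow S'$. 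The key observation is that the full strength of Hall's theorem is not needed: it suffices to apply the hypothesis to the single subset $\mathcal{T}:=\mathcal{S}\cap\mathcal{B}$. This is exactly why the hypothesis must be assumed for \emph{every} $\mathcal{T}\subseteq\mathcal{S}$, since as $\mathcal{B}$ varies $\mathcal{S}\cap\mathcal{B}$ can in principle be any subset of $\mathcal{S}$.

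First I would establish a propagation lemma: if $\mathcal{B}$ is UC with $\mathcal{B}\uplus\mathcal{A}=\mathcal{B}$ and $S\in\mathcal{B}$, then $S\longrightarrow S'$ implies $S'\in\mathcal{B}$. The proof is an induction along the defining tuples $(B_q,B_r)$ of the chain, carrying the inductive hypothesis $B_q\in\mathcal{B}$. If the tuple satisfies Condition~1 of Definition~\ref{fcchain}, then $B_r=A\cup B_q$ for some $A\in\mathcal{A}$, and $\mathcal{B}\uplus\mathcal{A}=\mathcal{B}$ gives $B_r\in\mathcal{B}$. If it satisfies Condition~2, then $B_r=S''\cup B_q$ for some $S''\in\gen{\mathcal{S}}_{\mathcal{A}}$, and union-closedness of $\mathcal{B}$ yields $B_r\in\mathcal{B}$ once we know $S''\in\mathcal{B}$.

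Applying the lemma with $\mathcal{T}=\mathcal{S}\cap\mathcal{B}$ gives $\mathcal{U}(\mathcal{T})\subseteq\mathcal{B}$; since $\mathcal{U}(\mathcal{T})\subseteq\mathcal{S}'$ by definition, in fact $\mathcal{U}(\mathcal{T})\subseteq\mathcal{S}'\cap\mathcal{B}$. Combining this with the hypothesis then closes the argument, because
\begin{equation}
|\mathcal{S}\cap\mathcal{B}|=|\mathcal{T}|\le|\mathcal{U}(\mathcal{T})|\le|\mathcal{S}'\cap\mathcal{B}|,\nonumber
\end{equation}
which is the desired inequality evaluated at $\mathcal{X}^{\mathcal{B}}$. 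Since $\mathcal{B}$ was an arbitrary integer point of $X(\mathcal{A},c)$, validity follows.

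The main obstacle is the Condition~2 case of the propagation lemma, namely guaranteeing that the auxiliary set $S''\in\gen{\mathcal{S}}_{\mathcal{A}}$ actually belongs to $\mathcal{B}$: the union inequality $x_{B_q}+x_{S''}\le 1+x_{B_r}$ only forces $B_r\in\mathcal{B}$ when $S''\in\mathcal{B}$, and this could a priori fail if $S''$ is generated using sets of $\mathcal{S}\setminus\mathcal{B}$. I expect this to be resolved by the minimality of the generated family: since $\mathcal{T}=\mathcal{S}\cap\mathcal{B}\subseteq\mathcal{B}$ and $\mathcal{B}$ is itself UC with $\mathcal{B}\uplus\mathcal{A}=\mathcal{B}$, we have $\gen{\mathcal{T}}_{\mathcal{A}}\subseteq\mathcal{B}$, so the required auxiliaries are available in $\mathcal{B}$ provided the chains realizing $\mathcal{U}(\mathcal{T})$ can be taken within $\gen{\mathcal{T}}_{\mathcal{A}}$ rather than all of $\gen{\mathcal{S}}_{\mathcal{A}}$. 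Making this reduction precise, that is, checking that every target counted in $\mathcal{U}(\mathcal{T})$ is reachable from $\mathcal{T}$ using only auxiliary sets already forced into $\mathcal{B}$, is the one step I would treat most carefully.
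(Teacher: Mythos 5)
Your route is the paper's route: evaluate the inequality at an arbitrary integer point $\mathcal{X}^{\mathcal{B}}$ of $X(\mathcal{A},c)$, apply the deficiency hypothesis only to the single subset $\mathcal{T}=\mathcal{S}\cap\mathcal{B}$, and conclude from $|\mathcal{S}\cap\mathcal{B}|=|\mathcal{T}|\le|\mathcal{U}(\mathcal{T})|\le|\mathcal{S}'\cap\mathcal{B}|$. The paper writes this as a contradiction argument, but the content\textemdash including the observation that no matching version of Hall's theorem is needed\textemdash is identical; the only substantive step in either version is your ``propagation lemma,'' which the paper disposes of in one sentence (``by the definition of an FC-chain \ldots for all $S'\in\mathcal{U}(\mathcal{T})$ we conclude that $x_{S'}=1$'') without further justification.

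The difficulty you isolate in the Condition~2 case is therefore not a defect of your write-up relative to the paper: it is a genuine gap that the paper's proof also leaves open, and under the hypotheses as literally stated it cannot be closed. Your observation that $\gen{\mathcal{T}}_{\mathcal{A}}\subseteq\mathcal{B}$ is correct (by minimality, since $\mathcal{B}$ is UC, contains $\mathcal{T}$, and satisfies $\mathcal{B}\uplus\mathcal{A}=\mathcal{B}$), but the relation $\longrightarrow$, and hence $\mathcal{U}(\mathcal{T})$, is defined with auxiliaries drawn from all of $\gen{\mathcal{S}}_{\mathcal{A}}$, and nothing in the hypothesis lets you reroute a chain through $\gen{\mathcal{T}}_{\mathcal{A}}$. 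Concretely, take $n=4$, $\mathcal{A}=\{\emptyset,\{4\},[4]\}$, $\mathcal{S}=\{\{1\},\{2,3\}\}$, $\mathcal{S}'=\{\{1,2,3\},\{1,4\}\}$: one checks $\{1\}\longrightarrow\{1,4\}$ (Condition~1 via $\{4\}$) and $\{1\}\longrightarrow\{1,2,3\}$, $\{2,3\}\longrightarrow\{1,2,3\}$ (Condition~2 via the auxiliaries $\{2,3\}$ and $\{1\}$ in $\gen{\mathcal{S}}_{\mathcal{A}}$), so $|\mathcal{T}|\le|\mathcal{U}(\mathcal{T})|$ for every $\mathcal{T}\subseteq\mathcal{S}$; yet $\mathcal{B}=\{\{2,3\},\{2,3,4\},[4]\}$ is UC, satisfies $\mathcal{B}\uplus\mathcal{A}=\mathcal{B}$ and, for $c=(4,0,0,1)$, the WV inequality, while $\sum_{S\in\mathcal{S}}x_S-\sum_{S\in\mathcal{S}'}x_S=1$ at $\mathcal{X}^{\mathcal{B}}$. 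The statement becomes true, and your induction goes through verbatim, if $\mathcal{U}(\mathcal{T})$ is redefined to count only targets reachable from $\mathcal{T}$ by chains whose Condition~2 auxiliaries lie in $\gen{\mathcal{T}}_{\mathcal{A}}$ (equivalently, are forced into every admissible $\mathcal{B}$ containing $\mathcal{T}$); this stronger property is what the explicit chains in the Appendix actually verify. So you have correctly located the load-bearing step, but what it needs is not more care\textemdash it needs a stronger hypothesis, since the stated one does not suffice.
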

\begin{proof}
Suppose there exists an integer vector in $X(\mathcal{A},c)$ which yields a UC family $\mathcal{F}$ such that the following inequality holds (for some $\mathcal{S},\mathcal{S}'\subset \mathcal{P}([n])$, as above)
\begin{equation}
\sum_{S \in \mathcal{S}\cap \mathcal{F}}x_S - \sum_{S \in \mathcal{S}'\cap \mathcal{F}}x_S \geq 1 \nonumber.
\end{equation}
It is clear that $\mathcal{S}\cap \mathcal{F} \neq \emptyset$, otherwise we arrive at a contradiction. Therefore the inequality implies that the number of variables $x_S$ which equal one, for all $S \in \mathcal{S}\cap \mathcal{F}$ is greater than the number of variables $x_S$ which equal one, for all $S \in \mathcal{S}'\cap \mathcal{F}$. Let $\mathcal{T} \subseteq \mathcal{S}\cap \mathcal{F}$, and for all $S \in \mathcal{T}$, let $x_S=1$. $|\mathcal{T}| \leq |\mathcal{U}(\mathcal{T})|$ holds by hypothesis. Furthermore by the definition of an FC-chain for each $\mathcal{T},\mathcal{S}' \subset \mathcal{P}([n])$ such that $\mathcal{T}\cap \mathcal{S}' = \emptyset$, for all $S' \in \mathcal{U}(\mathcal{T})$ we conclude that $x_{S'}=1$. Thus we arrive at a contradiction.
\end{proof}
Observe that FC-chain inequalities generalize FC-inequalities. We will use them in the appendix to explicitly exhibit the branch and bound tree of the counterexample in the next section. In particular, this implies that our counterexample requires no \emph{trust} from the reader, in the sense that its verification can be separated from the complex optimization process that produced it.
\section{Generators for Non\textendash FC-families}\label{sec:Generators}
In this section we exhibit a counterexample to a conjecture of Morris \cite{Morris} about generators for Non-FC-families.
\begin{definition}[regular] \label{regular}
Let $\mathcal{S}$ be a family of sets such that $U(\mathcal{S})=[n]$. Suppose $\mathcal{S}$ is a minimal generator for a UC family $\mathcal{F}$, such that $\mathcal{F}$ is a Non\textendash FC-family. Then $\mathcal{S}$ is \emph{regular} if and only if for any $A \in \mathcal{S}$, $A \neq \emptyset$, and any $i \in [n]$, the UC family 
$\gen{(\mathcal{S}\setminus \left\{A\right\})\cup \left\{A \cup \left\{i\right\} \right\}}$ is Non\textendash FC.
\end{definition}
\begin{conj}[Morris 2006] \label{conjecture}
Let $\mathcal{S}$ be a family of sets such that $U(\mathcal{S})=[n]$, for $n \geq 3$. Suppose $\mathcal{S}$ is a minimal generator for a UC family $\mathcal{F}$, such that $\mathcal{F}$ is a Non\textendash FC-family. Then $\mathcal{S}$ is regular.
\end{conj}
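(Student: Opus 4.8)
The plan is to prove the conjecture by contraposition through Poonen's Theorem: show that if the enlarged family $\mathcal{F}':=\gen{\mathcal{S}'}$, where $\mathcal{S}':=(\mathcal{S}\setminus\{A\})\cup\{A\cup\{i\}\}$, were an FC-family, then the original family $\mathcal{F}=\gen{\mathcal{S}}$ would also be FC, contradicting the hypothesis that $\mathcal{S}$ generates a Non\textendash FC-family. First I would dispose of the trivial case $i\in A$, where $A\cup\{i\}=A$ and $\mathcal{S}'=\mathcal{S}$, so that regularity holds vacuously; thus assume $i\notin A$. By Proposition~\ref{emptynot} we may assume $\emptyset\in\mathcal{F}$ and $\emptyset\in\mathcal{F}'$, so by Poonen's Theorem (Theorem~\ref{Poonen}) together with Corollary~\ref{rqz} the assertion ``$\mathcal{F}'$ is Non\textendash FC'' is equivalent to ``$P^{\mathcal{F}'}=\emptyset$'', and likewise for $\mathcal{F}$. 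Assuming toward a contradiction that $P^{\mathcal{F}'}\neq\emptyset$, I would fix a weight vector $c'\in P^{\mathcal{F}'}$ and try to manufacture from it a point of $P^{\mathcal{F}}$, which is impossible since $\mathcal{F}$ is Non\textendash FC.

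The engine of the argument is that the admissible test families of the two Poonen systems differ only through the closure condition imposed by the single enlarged generator. For a UC family $\mathcal{B}$, the condition $\mathcal{B}\uplus\mathcal{F}=\mathcal{B}$ is equivalent to closure of $\mathcal{B}$ under union with each member of the generating set $\mathcal{S}$ (any $F\in\mathcal{F}$ being an iterated union of generators), and similarly $\mathcal{B}\uplus\mathcal{F}'=\mathcal{B}$ is equivalent to closure under $\mathcal{S}'$. Given any $\mathcal{F}$-admissible UC family $\mathcal{B}$, for which I must verify the weight inequality for the candidate vector, I would form its closure $\mathcal{B}^{+}:=\,$$\gen{\mathcal{B}}_{\mathcal{F}'}$ by repeatedly adjoining the unions $B\cup(A\cup\{i\})$. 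Since $\mathcal{B}$ was already UC and closed under $\mathcal{S}\setminus\{A\}$, every genuinely new set of $\mathcal{B}^{+}$ is produced using $A\cup\{i\}$ at least once and therefore contains $i$. As $\mathcal{B}^{+}$ is $\mathcal{F}'$-admissible and $c'\in P^{\mathcal{F}'}$, we have $\sum_{j\in[n]}c'_j|\mathcal{B}^{+}_j|\ge|\mathcal{B}^{+}|/2$. Writing $\delta:=|\mathcal{B}^{+}|-|\mathcal{B}|$ for the number of adjoined sets and $\delta_j$ for the number of them containing $j$, this rearranges to $\sum_{j}c'_j|\mathcal{B}_j|\ge |\mathcal{B}|/2+\bigl(\delta/2-\sum_j c'_j\delta_j\bigr)$. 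If the correction term $\delta/2-\sum_j c'_j\delta_j$ were always nonnegative, then $c'$ itself would satisfy every $\mathcal{F}$-admissible weight inequality, giving $c'\in P^{\mathcal{F}}$ and completing the proof.

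The main obstacle, which I expect to be the genuinely hard point, is that the correction term can be strictly negative. Because every newly adjoined set contains $i$, we have $\delta_i=\delta$, hence $\sum_j c'_j\delta_j\ge c'_i\,\delta$; as soon as the weight $c'_i$ that the FC-certificate for $\mathcal{F}'$ places on the newly enforced element $i$ exceeds $1/2$ and $\delta>0$, the correction term becomes negative and the pullback fails. This is not merely a slack bound: enlarging a generator to $A\cup\{i\}$ forces $i$ into many additional admissible sets, precisely the mechanism by which $i$ could become frequent enough to certify $\mathcal{F}'$ as FC even when no element certified $\mathcal{F}$. The natural repair is to perturb $c'$ by shifting mass off $i$ and renormalizing, but controlling the effect of such a shift on the coordinates $j\neq i$ \emph{simultaneously across all} $\mathcal{F}$-admissible families is where the approach stalls: minimality of $\mathcal{S}$ only guarantees that $A$ is not a union of the remaining generators, a single-step local condition that yields no uniform handle on the quantities $\delta_j$ for $j\neq i$. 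Accordingly, I expect the transfer to go through cleanly only under an additional hypothesis guaranteeing that some weight system for $\mathcal{F}'$ keeps $c'_i$ bounded by $1/2$, and I would isolate the unconditional control of the correction term $\delta/2-\sum_j c'_j\delta_j$ as the crux on which the entire proof turns.
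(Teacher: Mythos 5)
You have set out to prove the statement, but the statement is false, and the paper's treatment of it is a refutation rather than a proof: Section~\ref{sec:Generators} constructs an explicit counterexample. The paper takes $\mathcal{S}=\left\{\emptyset,\{4,5,6\},\{1,3,4\},\{1,2,5,6\},\{1,2,3,4\}\right\}\subset\mathcal{P}([6])$, shows via an infeasible linear system with explicit Farkas duals that $\gen{\mathcal{S}}$ is a Non\textendash FC-family (Proposition~\ref{nonfcounter}), and then shows that replacing the generator $\{1,2,3,4\}$ by $\{1,2,3,4,5\}$ yields a family $\gen{\mathcal{S}'}$ that \emph{is} FC, certified by the integer weight vector $c=(16,8,12,20,17,15)$ for which $IP(\gen{\mathcal{S}'},c)$ is infeasible (Proposition~\ref{morriscounter}). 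Hence $\mathcal{S}$ is a minimal generator of a Non\textendash FC-family that is not regular (Corollary~\ref{counterexample}), so no proof of Conjecture~\ref{conjecture} can exist, and any attempted transfer argument must break down somewhere.

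What is notable is that your own analysis pinpoints exactly \emph{where} it breaks down, yet you draw the wrong conclusion from this. You correctly observe that every set adjoined when closing an $\mathcal{F}$-admissible family under the enlarged generator $A\cup\{i\}$ contains $i$, so pulling a Poonen certificate for $\mathcal{F}'$ back to $\mathcal{F}$ fails precisely when that certificate exploits the newly forced frequency of $i$; and you correctly note that minimality of $\mathcal{S}$ gives no uniform control over the correction term. But this is not a technical obstacle awaiting a perturbation argument or an added hypothesis \textemdash\ it is the actual mechanism by which adjoining the element $5$ to $\{1,2,3,4\}$ converts a Non\textendash FC-family into an FC-family in the paper's example. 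When a transfer argument stalls for a structural reason of this kind, the productive next step is to suspect the statement itself and try to realize the failure mechanism concretely; the paper does exactly that, using its cutting-plane framework (Algorithm~\ref{row generation}) to decide both $\gen{\mathcal{S}}$ and $\gen{\mathcal{S}'}$ exactly, with certificates (Farkas duals on one side, a weight vector with an infeasibility proof on the other) that can be checked independently of the optimization process that produced them.
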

Morris \cite{Morris} checked the conjecture for all known families at the time, and therefore considered it plausible. In some sense, Conjecture \ref{conjecture} perfectly illustrates our general lack of knowledge about UC families since\textemdash as a number of other related questions\textemdash it has eluded an answer for a relatively long time. The obstacle\textemdash in this case and others to follow\textemdash is the lack of a method for exactly characterizing FC-families, a gap in knowledge which we correct with our framework. 
\subsection{A Counterexample for Structures in Non\textendash FC-families}
Our counterexample on six elements is minimal, in the sense that Morris \cite{Morris} completely characterizes FC-families on 5 elements. \\ Let $\mathcal{S}:=\left\{\emptyset, \left\{4,5,6 \right\}, \left\{1,3,4 \right\}, \left\{1,2,5,6 \right\}, \left\{1,2,3,4 \right\} \right\} \subset \mathcal{P}([6])$. Furthermore, let $\mathcal{T}:=\{\left\{1,2,4,5,6 \right\}, \left\{1,3,4,5,6 \right\}, \left\{1,2,3,4,5,6 \right\} \} \subset \mathcal{P}([6])$. Hence it follows that $\gen{\mathcal{S}} = \mathcal{S} \cup \mathcal{T}$. It is straightforward to check that $\mathcal{S}$ is a minimal generator for $\mathcal{S} \cup \mathcal{T}$. We will show that $\gen{\mathcal{S}}$ is a Non\textendash FC-family. There is a stronger connection between the structure of inequalities featured in the proof below and questions of Vaughan \cite{Vaughan2} and Morris \cite{Morris} we answer later in this work. In Section \ref{sec:Relaxation} we explicitly describe the structure of UC families from which the inequalities below are derived in relation to the questions of interest. 
\begin{prop}\label{nonfcounter}
$\gen{\mathcal{S}}$ is a Non\textendash FC-family.
\end{prop}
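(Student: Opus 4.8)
The plan is to show that $\gen{\mathcal{S}}$ fails the weight condition of Corollary~\ref{rqz}: no vector $c \in \mathbb{R}^6_{\geq 0}$ with $\sum_{i \in [6]} c_i = 1$ can satisfy all of Poonen's inequalities. In the language of Section~2 this is exactly the assertion $P^{\gen{\mathcal{S}}} = \emptyset$. Since $\mathcal{P}([6])$ is finite, $P^{\gen{\mathcal{S}}}$ is cut out by finitely many rational linear inequalities (one for each admissible family $\mathcal{B}$, together with the normalization and the nonnegativity constraints), so by Farkas' Lemma its emptiness is equivalent to the existence of a nonnegative combination of a finite subcollection of those inequalities that is inconsistent with $\sum_i c_i = 1$, $c_i \geq 0$.

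Concretely, I would first restrict attention to the UC families $\mathcal{B} \subseteq \mathcal{P}([6])$ satisfying $\mathcal{B} \uplus \gen{\mathcal{S}} = \mathcal{B}$. Substituting $\sum_i c_i = 1$ as in the proof of Corollary~\ref{weight}, each such $\mathcal{B}$ contributes the homogeneous constraint $\sum_{i \in [6]} (2|\mathcal{B}_i| - |\mathcal{B}|)\,c_i \geq 0$. The task is then to select a finite list $\mathcal{B}^{(1)}, \ldots, \mathcal{B}^{(k)}$ of admissible families — precisely the families that the separation oracle $IP(\gen{\mathcal{S}},c)$ returns across the iterations of Algorithm~\ref{row generation} — together with nonnegative multipliers $\lambda_1, \ldots, \lambda_k$.

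Next I would verify the Farkas certificate: check that for \emph{every} $i \in [6]$ the combined coefficient $\sum_{j=1}^{k} \lambda_j\,(2|\mathcal{B}^{(j)}_i| - |\mathcal{B}^{(j)}|)$ is strictly negative. Granting this, any putative weight vector satisfying all the constraints would have to obey $0 \leq \sum_j \lambda_j \sum_i (2|\mathcal{B}^{(j)}_i| - |\mathcal{B}^{(j)}|)\,c_i = \sum_i \bigl(\sum_j \lambda_j (2|\mathcal{B}^{(j)}_i| - |\mathcal{B}^{(j)}|)\bigr) c_i < 0$, where the strict inequality uses $c_i \geq 0$ and $\sum_i c_i = 1 > 0$ (so at least one $c_i$ is positive). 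This contradiction forces $P^{\gen{\mathcal{S}}} = \emptyset$, and Corollary~\ref{rqz} then yields that $\gen{\mathcal{S}}$ is Non--FC.

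The main obstacle is not the verification, which is a finite rational arithmetic check once the data are fixed, but the \emph{discovery} of the right families $\mathcal{B}^{(j)}$ and multipliers $\lambda_j$: a priori there are exponentially many admissible $\mathcal{B}$, and a naive search is hopeless. This is exactly what Algorithm~\ref{row generation} automates, iteratively invoking the oracle of Theorem~\ref{oracle} to generate the cutting planes $\sum_i y_i |\mathcal{B}_i| \geq |\mathcal{B}|/2$ until the residual system $H$ becomes empty; the accumulated families and the Farkas duals of the final infeasible system furnish the certificate. For the write-up I would first confirm that $\gen{\mathcal{S}} = \mathcal{S} \cup \mathcal{T}$ is genuinely union-closed and that each listed $\mathcal{B}^{(j)}$ satisfies $\mathcal{B}^{(j)} \uplus \gen{\mathcal{S}} = \mathcal{B}^{(j)}$ (so that its inequality is valid for $P^{\gen{\mathcal{S}}}$), and only then exhibit the explicit coefficients and multipliers, reducing the remainder of the argument to the single sign check described above.
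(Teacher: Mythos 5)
Your plan is exactly the paper's proof: the paper establishes the proposition by running Algorithm~\ref{row generation}, displaying an irreducible infeasible subsystem of $P^{\gen{\mathcal{S}}}$ consisting of the normalization row plus six explicit UC families $\mathcal{B}$ with $\mathcal{B} \uplus \gen{\mathcal{S}} = \mathcal{B}$ (given as $0$--$1$ matrices, checked to be union-closed and closed under unions with $\gen{\mathcal{S}}$), and certifying infeasibility by printing the Farkas dual of each row --- precisely the certificate-plus-sign-check you describe, in the inhomogeneous form (normalization kept as a separate row with its own dual) rather than your homogenized form, which is equivalent. The one thing to note is that your write-up is a proof schema: the actual content of the paper's proof is the explicit data (the six families and the multipliers), and until that certificate is exhibited and the finite arithmetic check performed, nothing about this particular $\mathcal{S}$ has been established.
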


\begin{proof}
Algorithm \ref{row generation} determines an infeasible system of constraints which yields the result. We display an irreducible infeasible subset of the given system. We identify columns with zero one entries for each $S \in \mathcal{P}([6])$. The six matrices featured below represent UC families. The top row keeps track of the number of sets in each family. In addition to rechecking with an exact rational solver \cite{Exact} and other solvers, we check that each matrix is UC via simple external subroutines and finally by hand. Furthermore, let $\mathcal{F}\subset \mathcal{P}([6])$ be a family represented by one of the matrices below. By inspection we see that $\mathcal{F} \uplus \gen{\mathcal{S}} = \mathcal{F}$. In each matrix, we color columns which correspond to sets in $\mathcal{S}$, $\mathcal{T}$, red and blue, respectively. Each matrix yields an Inequality~\eqref{poon} from Poonen's Theorem (multiplied by two) featured below it. The following system of constraints is infeasible in nonnegative $y_i$ for all $1 \leq i \leq 6$. For each row we display the Farkas dual values in square brackets. This yields a certificate of infeasibility via a straightforward application of Farkas' Lemma. For convenience we state the lemma in the appendix.
\\
$\tt{[-7190]:y_1 + y_2 + y_3  + y_4 + y_5 + y_6 = 1}$.
\\
\vspace{4mm}
\[
\begin{tikzpicture}[ transform canvas={scale=0.51} ]
\matrix (m)[matrix of math nodes]
{   \color{red}{0} &  \color{red}{1} &  \color{red}{2} &  \color{red}{3} &  \color{red}{4} &  \color{red}{5} &  \color{red}{6} &  \color{red}{7} &  \color{red}{8} &  \color{red}{9} &  \color{red}{10} &  \color{red}{11} &  \color{red}{12} &  \color{red}{13} &  \color{red}{14} &  \color{red}{15} &  \color{red}{16} &  \color{red}{17} &  \color{red}{18} &  \color{red}{19} &  \color{red}{20} &  \color{red}{21} &  \color{red}{22} &  \color{red}{23} &  \color{red}{24} &  \color{red}{25} &  \color{red}{26} &  \color{red}{27} &  \color{red}{28} &  \color{red}{29} &  \color{red}{30} &  \color{red}{31} &  \color{red}{32} &  \color{red}{33} &  \color{red}{34} &  \color{red}{35} &  \color{red}{36} &  \color{red}{37} &  \color{red}{38} &  \color{red}{39} &  \color{red}{40} &  \color{red}{41} &  \color{red}{42} &  \color{red}{43} \\ 
c_1& \color{red}{0}& 0& 0& 0& 0& 0& 0& \color{red}{0}& 0& 0& 0& 0& 0& 0& 0& 0& 0& 0& 0& 0& 0& 0& 0& 0& 0& 0& 0& 0& 0& 0& 0& 0& \color{red}{1}& 1& 1& \color{blue}{1}& \color{red}{1}& \color{blue}{1}& 1& \color{red}{1}& 1& 1& \color{blue}{1}\\ 
c_2& \color{red}{0}& 0& 0& 0& 0& 0& 0& \color{red}{0}& 0& 0& 0& 0& 0& 0& 0& 0& 1& 1& 1& 1& 1& 1& 1& 1& 1& 1& 1& 1& 1& 1& 1& 1& \color{red}{0}& 0& 0& \color{blue}{0}& \color{red}{1}& \color{blue}{1}& 1& \color{red}{1}& 1& 1& \color{blue}{1}\\ 
c_3& \color{red}{0}& 0& 0& 0& 0& 0& 0& \color{red}{0}& 1& 1& 1& 1& 1& 1& 1& 1& 0& 0& 0& 0& 0& 0& 0& 0& 1& 1& 1& 1& 1& 1& 1& 1& \color{red}{1}& 1& 1& \color{blue}{1}& \color{red}{0}& \color{blue}{0}& 1& \color{red}{1}& 1& 1& \color{blue}{1}\\ 
c_4& \color{red}{0}& 0& 0& 0& 1& 1& 1& \color{red}{1}& 0& 0& 0& 0& 1& 1& 1& 1& 0& 0& 0& 0& 1& 1& 1& 1& 0& 0& 0& 0& 1& 1& 1& 1& \color{red}{1}& 1& 1& \color{blue}{1}& \color{red}{0}& \color{blue}{1}& 0& \color{red}{1}& 1& 1& \color{blue}{1}\\ 
c_5& \color{red}{0}& 0& 1& 1& 0& 0& 1& \color{red}{1}& 0& 0& 1& 1& 0& 0& 1& 1& 0& 0& 1& 1& 0& 0& 1& 1& 0& 0& 1& 1& 0& 0& 1& 1& \color{red}{0}& 0& 1& \color{blue}{1}& \color{red}{1}& \color{blue}{1}& 1& \color{red}{0}& 0& 1& \color{blue}{1}\\ 
c_6& \color{red}{0}& 1& 0& 1& 0& 1& 0& \color{red}{1}& 0& 1& 0& 1& 0& 1& 0& 1& 0& 1& 0& 1& 0& 1& 0& 1& 0& 1& 0& 1& 0& 1& 0& 1& \color{red}{0}& 1& 0& \color{blue}{1}& \color{red}{1}& \color{blue}{1}& 1& \color{red}{0}& 1& 0& \color{blue}{1}\\ 
};
\end{tikzpicture}
\]
\vspace{5mm}
\\
$\tt{[30]:22y_1 + 46y_2 + 50y_3  + 50y_4 + 46y_5 + 46y_6 \geq 43}$.
\vspace{5mm}
\\
\[
\begin{tikzpicture}[ transform canvas={scale=0.57} ]
\matrix (m)[matrix of math nodes]
{  \color{red}{0} &  \color{red}{1} &  \color{red}{2} &  \color{red}{3} &  \color{red}{4} &  \color{red}{5} &  \color{red}{6} &  \color{red}{7} &  \color{red}{8} &  \color{red}{9} &  \color{red}{10} &  \color{red}{11} &  \color{red}{12} &  \color{red}{13} &  \color{red}{14} &  \color{red}{15} &  \color{red}{16} &  \color{red}{17} &  \color{red}{18} &  \color{red}{19} &  \color{red}{20} &  \color{red}{21} &  \color{red}{22} &  \color{red}{23} &  \color{red}{24} &  \color{red}{25} &  \color{red}{26} &  \color{red}{27} &  \color{red}{28} &  \color{red}{29} &  \color{red}{30} &  \color{red}{31} &  \color{red}{32} &  \color{red}{33} &  \color{red}{34} &  \color{red}{35} &  \color{red}{36} &  \color{red}{37} &  \color{red}{38} &  \color{red}{39} \\ 
c_1& \color{red}{0}& 0& 0& 0& 0& 0& 0& \color{red}{0}& 0& 0& 0& 0& 0& 0& 0& 0& 1& 1& 1& 1& 1& 1& 1& 1& 1& 1& 1& 1& \color{red}{1}& 1& 1& \color{blue}{1}& \color{red}{1}& \color{blue}{1}& 1& \color{red}{1}& 1& 1& \color{blue}{1}\\ 
c_2& \color{red}{0}& 0& 0& 0& 0& 0& 0& \color{red}{0}& 0& 0& 0& 0& 0& 0& 0& 0& 0& 0& 0& 0& 0& 0& 0& 0& 0& 0& 0& 0& \color{red}{0}& 0& 0& \color{blue}{0}& \color{red}{1}& \color{blue}{1}& 1& \color{red}{1}& 1& 1& \color{blue}{1}\\ 
c_3& \color{red}{0}& 0& 0& 0& 0& 0& 0& \color{red}{0}& 1& 1& 1& 1& 1& 1& 1& 1& 0& 0& 0& 0& 0& 0& 0& 0& 1& 1& 1& 1& \color{red}{1}& 1& 1& \color{blue}{1}& \color{red}{0}& \color{blue}{0}& 1& \color{red}{1}& 1& 1& \color{blue}{1}\\ 
c_4& \color{red}{0}& 0& 0& 0& 1& 1& 1& \color{red}{1}& 0& 0& 0& 0& 1& 1& 1& 1& 0& 0& 0& 0& 1& 1& 1& 1& 0& 0& 0& 0& \color{red}{1}& 1& 1& \color{blue}{1}& \color{red}{0}& \color{blue}{1}& 0& \color{red}{1}& 1& 1& \color{blue}{1}\\ 
c_5& \color{red}{0}& 0& 1& 1& 0& 0& 1& \color{red}{1}& 0& 0& 1& 1& 0& 0& 1& 1& 0& 0& 1& 1& 0& 0& 1& 1& 0& 0& 1& 1& \color{red}{0}& 0& 1& \color{blue}{1}& \color{red}{1}& \color{blue}{1}& 1& \color{red}{0}& 0& 1& \color{blue}{1}\\ 
c_6& \color{red}{0}& 1& 0& 1& 0& 1& 0& \color{red}{1}& 0& 1& 0& 1& 0& 1& 0& 1& 0& 1& 0& 1& 0& 1& 0& 1& 0& 1& 0& 1& \color{red}{0}& 1& 0& \color{blue}{1}& \color{red}{1}& \color{blue}{1}& 1& \color{red}{0}& 1& 0& \color{blue}{1}\\ 
};

\end{tikzpicture}
\]
\\
\vspace{5mm}
\\
$\tt{[9]:46y_1 + 14y_2 + 42y_3  + 42y_4 + 42y_5 + 42y_6 \geq 39}$.
\\
\vspace{5mm}
\\
\[
\begin{tikzpicture}[ transform canvas={scale=0.48} ]
\matrix (m)[matrix of math nodes]
{  \color{red}{0} &  \color{red}{1} &  \color{red}{2} &  \color{red}{3} &  \color{red}{4} &  \color{red}{5} &  \color{red}{6} &  \color{red}{7} &  \color{red}{8} &  \color{red}{9} &  \color{red}{10} &  \color{red}{11} &  \color{red}{12} &  \color{red}{13} &  \color{red}{14} &  \color{red}{15} &  \color{red}{16} &  \color{red}{17} &  \color{red}{18} &  \color{red}{19} &  \color{red}{20} &  \color{red}{21} &  \color{red}{22} &  \color{red}{23} &  \color{red}{24} &  \color{red}{25} &  \color{red}{26} &  \color{red}{27} &  \color{red}{28} &  \color{red}{29} &  \color{red}{30} &  \color{red}{31} &  \color{red}{32} &  \color{red}{33} &  \color{red}{34} &  \color{red}{35} &  \color{red}{36} &  \color{red}{37} &  \color{red}{38} &  \color{red}{39} &  \color{red}{40} &  \color{red}{41} &  \color{red}{42} &  \color{red}{43} &  \color{red}{44} &  \color{red}{45} &  \color{red}{46} \\ 
c_1& \color{red}{0}& 0& 0& 0& \color{red}{0}& 0& 0& 0& 0& 0& 0& 0& 0& 0& 0& 0& 0& 0& 0& 0& 1& 1& 1& 1& 1& 1& 1& 1& 1& \color{red}{1}& 1& 1& \color{blue}{1}& 1& 1& 1& \color{red}{1}& \color{blue}{1}& 1& 1& 1& 1& \color{red}{1}& 1& 1& \color{blue}{1}\\ 
c_2& \color{red}{0}& 0& 0& 0& \color{red}{0}& 0& 0& 0& 0& 0& 1& 1& 1& 1& 1& 1& 1& 1& 1& 1& 0& 0& 0& 0& 0& 0& 0& 0& 0& \color{red}{0}& 0& 0& \color{blue}{0}& 1& 1& 1& \color{red}{1}& \color{blue}{1}& 1& 1& 1& 1& \color{red}{1}& 1& 1& \color{blue}{1}\\ 
c_3& \color{red}{0}& 0& 0& 0& \color{red}{0}& 1& 1& 1& 1& 1& 0& 0& 0& 0& 0& 1& 1& 1& 1& 1& 0& 0& 0& 0& 0& 1& 1& 1& 1& \color{red}{1}& 1& 1& \color{blue}{1}& 0& 0& 0& \color{red}{0}& \color{blue}{0}& 1& 1& 1& 1& \color{red}{1}& 1& 1& \color{blue}{1}\\ 
c_4& \color{red}{0}& 0& 0& 0& \color{red}{1}& 0& 0& 0& 0& 1& 0& 0& 0& 0& 1& 0& 0& 0& 0& 1& 0& 0& 0& 0& 1& 0& 0& 0& 0& \color{red}{1}& 1& 1& \color{blue}{1}& 0& 0& 0& \color{red}{0}& \color{blue}{1}& 0& 0& 0& 0& \color{red}{1}& 1& 1& \color{blue}{1}\\ 
c_5& \color{red}{0}& 0& 1& 1& \color{red}{1}& 0& 0& 1& 1& 1& 0& 0& 1& 1& 1& 0& 0& 1& 1& 1& 0& 0& 1& 1& 1& 0& 0& 1& 1& \color{red}{0}& 0& 1& \color{blue}{1}& 0& 0& 1& \color{red}{1}& \color{blue}{1}& 0& 0& 1& 1& \color{red}{0}& 0& 1& \color{blue}{1}\\ 
c_6& \color{red}{0}& 1& 0& 1& \color{red}{1}& 0& 1& 0& 1& 1& 0& 1& 0& 1& 1& 0& 1& 0& 1& 1& 0& 1& 0& 1& 1& 0& 1& 0& 1& \color{red}{0}& 1& 0& \color{blue}{1}& 0& 1& 0& \color{red}{1}& \color{blue}{1}& 0& 1& 0& 1& \color{red}{0}& 1& 0& \color{blue}{1}\\ 
};

\end{tikzpicture}
\]
\\
\vspace{5mm}
\\
$\tt{[44]:52y_1 + 46y_2 + 52y_3  + 28y_4 + 52y_5 + 52y_6 \geq 46}$.
\\
\vspace{5mm}
\\
\[
\begin{tikzpicture}[ transform canvas={scale=0.56} ]
\matrix (m)[matrix of math nodes]
{   \color{red}{0} &  \color{red}{1} &  \color{red}{2} &  \color{red}{3} &  \color{red}{4} &  \color{red}{5} &  \color{red}{6} &  \color{red}{7} &  \color{red}{8} &  \color{red}{9} &  \color{red}{10} &  \color{red}{11} &  \color{red}{12} &  \color{red}{13} &  \color{red}{14} &  \color{red}{15} &  \color{red}{16} &  \color{red}{17} &  \color{red}{18} &  \color{red}{19} &  \color{red}{20} &  \color{red}{21} &  \color{red}{22} &  \color{red}{23} &  \color{red}{24} &  \color{red}{25} &  \color{red}{26} &  \color{red}{27} &  \color{red}{28} &  \color{red}{29} &  \color{red}{30} &  \color{red}{31} &  \color{red}{32} &  \color{red}{33} &  \color{red}{34} &  \color{red}{35} &  \color{red}{36} &  \color{red}{37} &  \color{red}{38} &  \color{red}{39} &  \color{red}{40} \\ 
c_1& \color{red}{0}& 0& 0& 0& 0& 0& 0& \color{red}{0}& 0& 0& 0& 0& 0& 0& 0& 0& 1& 1& 1& 1& 1& 1& 1& 1& \color{red}{1}& 1& 1& \color{blue}{1}& 1& 1& 1& \color{red}{1}& 1& 1& 1& \color{blue}{1} & \color{red}{1}& 1& 1& \color{blue}{1}\\ 
c_2& \color{red}{0}& 0& 0& 0& 0& 0& 0& \color{red}{0}& 1& 1& 1& 1& 1& 1& 1& 1& 0& 0& 0& 0& 0& 0& 0& 0& \color{red}{0}& 0& 0& \color{blue}{0}& 1& 1& 1& \color{red}{1}& 1& 1& 1& \color{blue}{1}& \color{red}{1}& 1& 1& \color{blue}{1}\\ 
c_3& \color{red}{0}& 0& 0& 0& 0& 0& 0& \color{red}{0}& 0& 0& 0& 0& 0& 0& 0& 0& 0& 0& 0& 0& 0& 0& 0& 0& \color{red}{1}& 1& 1& \color{blue}{1}& 0& 0& 0& \color{red}{0}& 0& 0& 0& \color{blue}{0}& \color{red}{1}& 1& 1& \color{blue}{1}\\ 
c_4& \color{red}{0}& 0& 0& 0& 1& 1& 1& \color{red}{1}& 0& 0& 0& 0& 1& 1& 1& 1& 0& 0& 0& 0& 1& 1& 1& 1& \color{red}{1}& 1& 1& \color{blue}{1}& 0& 0& 0& \color{red}{0}& 1& 1& 1& \color{blue}{1}& \color{red}{1}& 1& 1& \color{blue}{1}\\ 
c_5& \color{red}{0}& 0& 1& 1& 0& 0& 1& \color{red}{1}& 0& 0& 1& 1& 0& 0& 1& 1& 0& 0& 1& 1& 0& 0& 1& 1& \color{red}{0}& 0& 1& \color{blue}{1}& 0& 0& 1& \color{red}{1}& 0& 0& 1& \color{blue}{1}& \color{red}{0}& 0& 1& \color{blue}{1}\\ 
c_6& \color{red}{0}& 1& 0& 1& 0& 1& 0& \color{red}{1}& 0& 1& 0& 1& 0& 1& 0& 1& 0& 1& 0& 1& 0& 1& 0& 1& \color{red}{0}& 1& 0& \color{blue}{1}& 0& 1& 0& \color{red}{1}& 0& 1& 0& \color{blue}{1}& \color{red}{0}& 1& 0& \color{blue}{1}\\ 
};

\end{tikzpicture}
\]
\\
\vspace{5mm}
\\
$\tt{[21]:48y_1 + 40y_2 + 16y_3  + 48y_4 + 40y_5 + 40y_6 \geq 40}$.
\\
\vspace{5mm}
\\
\[
\begin{tikzpicture}[ transform canvas={scale=0.53} ]
\matrix (m)[matrix of math nodes]
{  \color{red}{0} &  \color{red}{1} &  \color{red}{2} &  \color{red}{3} &  \color{red}{4} &  \color{red}{5} &  \color{red}{6} &  \color{red}{7} &  \color{red}{8} &  \color{red}{9} &  \color{red}{10} &  \color{red}{11} &  \color{red}{12} &  \color{red}{13} &  \color{red}{14} &  \color{red}{15} &  \color{red}{16} &  \color{red}{17} &  \color{red}{18} &  \color{red}{19} &  \color{red}{20} &  \color{red}{21} &  \color{red}{22} &  \color{red}{23} &  \color{red}{24} &  \color{red}{25} &  \color{red}{26} &  \color{red}{27} &  \color{red}{28} &  \color{red}{29} &  \color{red}{30} &  \color{red}{31} &  \color{red}{32} &  \color{red}{33} &  \color{red}{34} &  \color{red}{35} &  \color{red}{36} &  \color{red}{37} &  \color{red}{38} &  \color{red}{39} &  \color{red}{40} &  \color{red}{41} &  \color{red}{42} \\ 
c_1& \color{red}{0}& 0& 0& 0& \color{red}{0}& 0& 0& 0& 0& 0& 0& 0& 0& 0& 0& 0& 0& 0& 0& 0& 1& 1& 1& 1& 1& 1& 1& \color{red}{1}& 1& \color{blue}{1}& 1& 1& \color{red}{1}& 1& 1& \color{blue}{1}& 1& 1& 1& \color{red}{1}& 1& \color{blue}{1}\\ 
c_2& \color{red}{0}& 0& 0& 0& \color{red}{0}& 0& 0& 0& 0& 0& 1& 1& 1& 1& 1& 1& 1& 1& 1& 1& 0& 0& 0& 0& 0& 0& 0& \color{red}{0}& 0& \color{blue}{0}& 1& 1& \color{red}{1}& 1& 1& \color{blue}{1}& 1& 1& 1& \color{red}{1}& 1& \color{blue}{1}\\ 
c_3& \color{red}{0}& 0& 0& 0& \color{red}{0}& 1& 1& 1& 1& 1& 0& 0& 0& 0& 0& 1& 1& 1& 1& 1& 0& 0& 0& 0& 0& 1& 1& \color{red}{1}& 1& \color{blue}{1}& 0& 0& \color{red}{0}& 0& 0& \color{blue}{0}& 1& 1& 1& \color{red}{1}& 1& \color{blue}{1}\\ 
c_4& \color{red}{0}& 0& 1& 1& \color{red}{1}& 0& 0& 1& 1& 1& 0& 0& 1& 1& 1& 0& 0& 1& 1& 1& 0& 0& 1& 1& 1& 0& 0& \color{red}{1}& 1& \color{blue}{1}& 0& 0& \color{red}{0}& 1& 1& \color{blue}{1}& 0& 0& 0& \color{red}{1}& 1& \color{blue}{1}\\ 
c_5& \color{red}{0}& 0& 0& 0& \color{red}{1}& 0& 0& 0& 0& 1& 0& 0& 0& 0& 1& 0& 0& 0& 0& 1& 0& 0& 0& 0& 1& 0& 0& \color{red}{0}& 0& \color{blue}{1}& 0& 0& \color{red}{1}& 0& 0& \color{blue}{1}& 0& 0& 1& \color{red}{0}& 0& \color{blue}{1}\\ 
c_6& \color{red}{0}& 1& 0& 1& \color{red}{1}& 0& 1& 0& 1& 1& 0& 1& 0& 1& 1& 0& 1& 0& 1& 1& 0& 1& 0& 1& 1& 0& 1& \color{red}{0}& 1& \color{blue}{1}& 0& 1& \color{red}{1}& 0& 1& \color{blue}{1}& 0& 1& 1& \color{red}{0}& 1& \color{blue}{1}\\ 
};

\end{tikzpicture}
\]
\\
\vspace{5mm}
\\
$\tt{[32]:44y_1 + 44y_2 + 42y_3  + 48y_4 + 20y_5 + 52y_6 \geq 42}$.
\\
\vspace{5mm}
\\
\[
\begin{tikzpicture}[ transform canvas={scale=0.53} ]
\matrix (m)[matrix of math nodes]
{  \color{red}{0} &  \color{red}{1} &  \color{red}{2} &  \color{red}{3} &  \color{red}{4} &  \color{red}{5} &  \color{red}{6} &  \color{red}{7} &  \color{red}{8} &  \color{red}{9} &  \color{red}{10} &  \color{red}{11} &  \color{red}{12} &  \color{red}{13} &  \color{red}{14} &  \color{red}{15} &  \color{red}{16} &  \color{red}{17} &  \color{red}{18} &  \color{red}{19} &  \color{red}{20} &  \color{red}{21} &  \color{red}{22} &  \color{red}{23} &  \color{red}{24} &  \color{red}{25} &  \color{red}{26} &  \color{red}{27} &  \color{red}{28} &  \color{red}{29} &  \color{red}{30} &  \color{red}{31} &  \color{red}{32} &  \color{red}{33} &  \color{red}{34} &  \color{red}{35} &  \color{red}{36} &  \color{red}{37} &  \color{red}{38} &  \color{red}{39} &  \color{red}{40} &  \color{red}{41} &  \color{red}{42} \\ 
c_1& \color{red}{0}& 0& 0& 0& \color{red}{0}& 0& 0& 0& 0& 0& 0& 0& 0& 0& 0& 0& 0& 0& 0& 0& 1& 1& 1& 1& 1& 1& 1& \color{red}{1}& 1& \color{blue}{1}& 1& 1& \color{red}{1}& 1& 1& \color{blue}{1}& 1& 1& 1& \color{red}{1}& 1& \color{blue}{1}\\ 
c_2& \color{red}{0}& 0& 0& 0& \color{red}{0}& 0& 0& 0& 0& 0& 1& 1& 1& 1& 1& 1& 1& 1& 1& 1& 0& 0& 0& 0& 0& 0& 0& \color{red}{0}& 0& \color{blue}{0}& 1& 1& \color{red}{1}& 1& 1& \color{blue}{1}& 1& 1& 1& \color{red}{1}& 1& \color{blue}{1}\\ 
c_3& \color{red}{0}& 0& 0& 0& \color{red}{0}& 1& 1& 1& 1& 1& 0& 0& 0& 0& 0& 1& 1& 1& 1& 1& 0& 0& 0& 0& 0& 1& 1& \color{red}{1}& 1& \color{blue}{1}& 0& 0& \color{red}{0}& 0& 0& \color{blue}{0}& 1& 1& 1& \color{red}{1}& 1& \color{blue}{1}\\ 
c_4& \color{red}{0}& 0& 1& 1& \color{red}{1}& 0& 0& 1& 1& 1& 0& 0& 1& 1& 1& 0& 0& 1& 1& 1& 0& 0& 1& 1& 1& 0& 0& \color{red}{1}& 1& \color{blue}{1}& 0& 0& \color{red}{0}& 1& 1& \color{blue}{1}& 0& 0& 0& \color{red}{1}& 1& \color{blue}{1}\\ 
c_5& \color{red}{0}& 1& 0& 1& \color{red}{1}& 0& 1& 0& 1& 1& 0& 1& 0& 1& 1& 0& 1& 0& 1& 1& 0& 1& 0& 1& 1& 0& 1& \color{red}{0}& 1& \color{blue}{1}& 0& 1& \color{red}{1}& 0& 1& \color{blue}{1}& 0& 1& 1& \color{red}{0}& 1& \color{blue}{1}\\ 
c_6& \color{red}{0}& 0& 0& 0& \color{red}{1}& 0& 0& 0& 0& 1& 0& 0& 0& 0& 1& 0& 0& 0& 0& 1& 0& 0& 0& 0& 1& 0& 0& \color{red}{0}& 0& \color{blue}{1}& 0& 0& \color{red}{1}& 0& 0& \color{blue}{1}& 0& 0& 1& \color{red}{0}& 0& \color{blue}{1}\\
};

\end{tikzpicture}
\]
\\
\vspace{5mm}
\\
$\tt{[32]:44y_1 + 44y_2 + 42y_3  + 48y_4 + 52y_5 + 20y_6 \geq 42}$.
\\

\end{proof}
We are now ready to show that $\mathcal{S}$ is not regular, and thus give a counterexample to Conjecture~\ref{conjecture}.
\begin{prop} \label{morriscounter}
 Let $\mathcal{S}' :=\left\{\emptyset, \left\{4,5,6 \right\}, \left\{1,3,4 \right\}, \left\{1,2,5,6 \right\}, \left\{1,2,3,4,5 \right\} \right\}$.
\\ Then $\gen{\mathcal{S}'}$ is an FC-family.
\end{prop}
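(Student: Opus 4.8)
The plan is to produce an explicit nonnegative weight vector witnessing condition~(2) of Poonen's Theorem for $\mathcal{A}:=\gen{\mathcal{S}'}$, and then to certify its correctness in a way that avoids enumerating the exponentially many admissible UC families. First I would compute the union closure explicitly: writing $a=\{4,5,6\}$, $b=\{1,3,4\}$, $c=\{1,2,5,6\}$, $d=\{1,2,3,4,5\}$, the only new sets obtained by taking unions are $a\cup b=\{1,3,4,5,6\}$, $a\cup c=\{1,2,4,5,6\}$ and $\{1,2,3,4,5,6\}$ (every higher union collapses to the full set), so $\gen{\mathcal{S}'}$ consists of eight sets. Note that $\mathcal{S}'=(\mathcal{S}\setminus\{\{1,2,3,4\}\})\cup\{\{1,2,3,4\}\cup\{5\}\}$, so the present proposition is exactly what is needed (via Definition~\ref{regular}) to contradict Conjecture~\ref{conjecture} for the minimal generator $\mathcal{S}$ of Proposition~\ref{nonfcounter}.

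By Corollary~\ref{rqz} it suffices to exhibit integers $c_1,\dots,c_6\ge 0$ with $\sum_i c_i\ge 1$ such that every UC family $\mathcal{B}\subseteq\mathcal{P}([6])$ with $\mathcal{B}\uplus\mathcal{A}=\mathcal{B}$ satisfies $\sum_{S\in\mathcal{B}}(\sum_{i\in S}c_i-\sum_{i\notin S}c_i)\ge 0$; equivalently, by Proposition~\ref{FranklIntProp}, to produce a $c$ for which $X(\mathcal{A},c)=\emptyset$. To find such a $c$ I would run Algorithm~\ref{row generation} on $\mathcal{A}$: because we expect $\mathcal{A}$ to be FC, the polyhedron $P^{\mathcal{A}}$ is nonempty, the cutting-plane loop terminates in the FC branch, and it returns a rational $\bar y\in P^{\mathcal{A}}$ whose scaled integer version is the desired $c$. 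By Theorem~\ref{oracle} the assertion $X(\mathcal{A},c)=\emptyset$ is precisely equivalent to $\bar y\in P^{\mathcal{A}}$, so finding the weights and proving emptiness of $X(\mathcal{A},c)$ are two sides of the same coin.

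The real work, and the main obstacle, is certifying $X(\mathcal{A},c)=\emptyset$ without trusting the solver, since a naive verification would range over all UC families $\mathcal{B}$ with $\mathcal{B}\uplus\mathcal{A}=\mathcal{B}$. To keep the certificate compact and hand-checkable I would use the FC-chain inequalities of Proposition~\ref{FCineq}, which are valid for $X(\mathcal{A},c)$ by Definition~\ref{fcchain}: together with the UC inequalities~\eqref{one} and the FS inequalities~\eqref{three} they let one branch on a small number of pivotal set-variables $x_S$ and, at each leaf, combine a handful of these valid inequalities with the Weight-Vector inequality~\eqref{two} to reach a contradiction. Exhibiting the resulting branch-and-bound tree explicitly—deferred to the appendix—then shows that no integer point satisfies all constraints, i.e. $X(\mathcal{A},c)=\emptyset$, and Proposition~\ref{FranklIntProp} yields that $\mathcal{A}$ is FC. The delicate part is choosing $c$ so that this tree is shallow enough to display, which is exactly where the $\ell_1$-minimizing objective of $I^{\mathcal{A}}$ and the maximally-violated-cut objective behind $IP(\mathcal{A},c)$ pay off.
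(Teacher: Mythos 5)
Your proposal follows essentially the same route as the paper: the paper's proof simply exhibits the weight vector $c=(16,8,12,20,17,15)$ and asserts that $IP(\gen{\mathcal{S}'},c)$ is infeasible, with the appendix certifying this exactly as you describe---branching on the single variable $x_0$ and giving Farkas duals for irreducible infeasible subsets built from UC, FS, FC and FC-chain inequalities together with the WV inequality. The only thing your plan leaves unspecified is the concrete value of $c$ and the explicit certificate data, which are precisely the computational outputs the paper records.
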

\begin{proof}
Let $c \in \mathbb{Z}^6_{\geq 0}$ such that $c =(16,8,12,20,17,15)$. Then $IP(\gen{\mathcal{S}'},c)$ is infeasible \footnote{In the appendix we explicitly show the infeasibility of $IP(\gen{\mathcal{S}'},c)$ by making use of FC-chain inequalities and displaying irreducible infeasible subsets of constraints for the two leaf nodes of the resulting branch and bound tree.}. 
\end{proof}
\begin{cor}\label{counterexample}
$\mathcal{S}$ is a counterexample to Conjecture \ref{conjecture}.
\end{cor}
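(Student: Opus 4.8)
The plan is to combine the two preceding propositions directly against Definition~\ref{regular}. Recall that $\mathcal{S} = \{\emptyset, \{4,5,6\}, \{1,3,4\}, \{1,2,5,6\}, \{1,2,3,4\}\}$, and by Proposition~\ref{nonfcounter} we know $\gen{\mathcal{S}}$ is a Non--FC-family. Since Morris completely characterizes FC-families on $5$ elements (and one checks directly that $\mathcal{S}$ contains no smaller generator of $\gen{\mathcal{S}}$), $\mathcal{S}$ is a minimal generator of a Non--FC-family, so it is a legitimate candidate to which the definition of \emph{regular} applies. To refute Conjecture~\ref{conjecture} it therefore suffices to exhibit a single set $A \in \mathcal{S}$ with $A \neq \emptyset$ and a single element $i \in [6]$ such that $\gen{(\mathcal{S}\setminus\{A\})\cup\{A\cup\{i\}\}}$ is \emph{not} Non--FC, i.e.\ is an FC-family.

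First I would identify the specific perturbation that yields Proposition~\ref{morriscounter}. Comparing $\mathcal{S}$ with $\mathcal{S}' = \{\emptyset, \{4,5,6\}, \{1,3,4\}, \{1,2,5,6\}, \{1,2,3,4,5\}\}$, the only change is in the last set: $A = \{1,2,3,4\}$ has been replaced by $A \cup \{5\} = \{1,2,3,4,5\}$, while every other generating set is left untouched. Thus with the choice $A = \{1,2,3,4\} \in \mathcal{S}$ (which is nonempty) and $i = 5 \in [6]$, we have exactly
\[
(\mathcal{S}\setminus\{A\})\cup\{A\cup\{i\}\} = \{\emptyset, \{4,5,6\}, \{1,3,4\}, \{1,2,5,6\}, \{1,2,3,4,5\}\} = \mathcal{S}'.
\]
By Proposition~\ref{morriscounter}, $\gen{\mathcal{S}'}$ is an FC-family, hence in particular it is \emph{not} Non--FC. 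This single instance violates the quantifier ``for any $A \in \mathcal{S}$, $A\neq\emptyset$, and any $i \in [n]$'' in Definition~\ref{regular}, so $\mathcal{S}$ fails to be regular.

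Putting these together: $\mathcal{S}$ is a minimal generator of the Non--FC-family $\gen{\mathcal{S}}$ (the hypothesis of Conjecture~\ref{conjecture}, with $n = 6 \geq 3$), yet $\mathcal{S}$ is not regular (its conclusion). Hence $\mathcal{S}$ is a counterexample to Conjecture~\ref{conjecture}, which is precisely the claim. I do not anticipate a genuine obstacle here, since both nontrivial ingredients---that $\gen{\mathcal{S}}$ is Non--FC and that $\gen{\mathcal{S}'}$ is FC---have already been established in Propositions~\ref{nonfcounter} and~\ref{morriscounter}; the only points requiring care are the bookkeeping verification that $\mathcal{S}$ is indeed a \emph{minimal} generator (so that the hypotheses of the conjecture truly apply) and the explicit recognition that $\mathcal{S}'$ arises from $\mathcal{S}$ by the single admissible operation $A \mapsto A \cup \{i\}$ with $A = \{1,2,3,4\}$, $i = 5$. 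The proof is therefore essentially a one-line deduction from the two propositions once this correspondence is spelled out.
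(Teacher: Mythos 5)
Your proposal is correct and follows exactly the paper's own argument: invoke Proposition~\ref{nonfcounter} for $\gen{\mathcal{S}}$ being Non--FC, observe that $\mathcal{S}' = (\mathcal{S}\setminus\{\{1,2,3,4\}\})\cup\{\{1,2,3,4\}\cup\{5\}\}$, and invoke Proposition~\ref{morriscounter} to conclude $\mathcal{S}$ is not regular. No differences of substance from the paper's proof.
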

\begin{proof}
We show that $\mathcal{S}$ is not regular. We observe that $U(\mathcal{S})=[6]$ and $\mathcal{S}$ is a minimal generator for $\gen{\mathcal{S}}$. Furthermore from Proposition~\ref{nonfcounter} it follows that $\gen{\mathcal{S}}$ is a Non\textendash FC-family. However $\mathcal{S}' = (\mathcal{S}\setminus \{1,2,3,4\})\cup \left\{\{1,2,3,4\} \cup \{5\} \right\}$ and Proposition~\ref{morriscounter} implies that $\gen{\mathcal{S}'}$ is an FC-family.
\end{proof}
\section{Relaxation Questions}~\label{sec:Relaxation}
In this section, we briefly address the practical behavior of Algorithm~\ref{row generation}, as it sheds light on open questions of interests in Vaughan \cite{Vaughan2} and Morris \cite{Morris}. As a result, we exhibit a counterexample to the questions of Morris and Vaughan.

Our current implementation features $I^{\mathcal{A}}$ and $IP(\mathcal{A},c)$ in order to avoid possible numerical trouble by minimizing the sum of the $z_i$, in addition to selecting the ``sharpest cut" whenever we solve $IP(\mathcal{A},c)$. Yet, without witnessing first-hand computations for fixed UC families $\mathcal{A}$ such that $U(\mathcal{A})=[n]$ and $6 \leq n \leq 10$, Algorithm~\ref{row generation} may appear fraught with theoretical dangers.\footnote{ $IP(\mathcal{A},c)$ is a binary program with an exponential number of variables and constraints in $n$. Furthermore the number of iterations of Algorithm~\ref{row generation} could be exponential in $n$.} However, in practice our method is well-behaved in the described range, and is consequently the currently best available technique for the \emph{exact} determination of FC-families.

Furthermore, our implementation \emph{mostly} confirms the heuristic intuition of Vaughan and Morris as will be made explicit in the next paragraphs. Thus in the tested range, Algorithm~\ref{row generation} \emph{mostly} iterates $n$ times. However, in some cases it iterates more than $n$ (but less than $2n$) times\footnote{The runtimes vary roughly from a few seconds for $6\leq n \leq 7$ and a few minutes for $8 \leq n \leq 9$, to a few hours for $n=10$. Furthermore verification with exact SCIP \cite{Exact} takes longer, as does testing a non-minimal FC-family. Computations were carried out on machines with 2.40 GHz quad-core processors and 16 GB of RAM.}. Among the latter we find counterexamples to open questions of interest which we feature below.

As mentioned in the introduction, Vaughan \cite{Vaughan2} implements a heuristic that guides the search for a potential weight system. Given a UC family $\mathcal{A}$, $\emptyset \in \mathcal{A}$, the heuristic focuses only on UC families $\mathcal{B}$ with $\mathcal{B} \ \uplus \ \mathcal{A} = \mathcal{B}$, where $\mathcal{B} = \mathcal{P}([n] \setminus \left\{j\right\}) \uplus \mathcal{A}$ for all $j \in [n]$. If there exists a solution to the system of linear equations $\sum_{i\in [n]}y_i|\mathcal{B}_i| = |\mathcal{B}|/2$ in nonnegative $y_i$, with $\sum_{i\in [n]}y_i \leq 1$, then the considered UC family $\mathcal{A}$ becomes a candidate FC-family. All of Vaughan's candidate FC-families in \cite{Vaughan2} are identified as above, followed by tedious case analysis that spans several pages for the proof that the given family is FC. We precisely state Vaughan's question as follows:
\begin{quest}[Vaughan 2003] \label{vaughanq}
Let $\mathcal{A}$ be a UC family such that $U(\mathcal{A})=[n]$ and $\emptyset \in \mathcal{A}$. Consider UC families $\mathcal{B} \subseteq \mathcal{P}([n])$ such that $\mathcal{B} =  \mathcal{P}([n] \setminus \left\{j\right\}) \uplus \mathcal{A}$ for all $j \in [n]$. Suppose the linear system of equations $\sum_{i\in [n]}y_i|\mathcal{B}_i| = |\mathcal{B}|/2$ for all $\mathcal{B}$ as above has a solution in nonnegative reals $y_i$ for all $i \in [n]$, such that $\sum_{i\in [n]}y_i \leq 1$. Does this imply that $P^{\mathcal{A}}$ is nonempty?
\end{quest}
Given a UC family $\mathcal{A}$, $\emptyset \in \mathcal{A}$, Morris \cite{Morris} also focuses on $\mathcal{B}$ as above, searching instead for integer vectors contained in the polyhedron defined by the inequalities derived from the $n$ given $\mathcal{B}$ and $z_i \geq 0$ for all $i \in [n]$ with $\sum_{i\in [n]}z_i \geq 1$. The idea is that the $n$ given inequalities could capture information of interest without needing the rest of the possible inequalities. Morris shows that this holds in a number of cases, but is it true in general? More precisely, we state it as the following question:
\begin{quest}[Morris 2006]\label{morrisq}
Let $\mathcal{A}$ be a UC family such that $U(\mathcal{A})=[n]$ and $\emptyset \in \mathcal{A}$. Consider UC families $\mathcal{B} \subseteq \mathcal{P}([n])$ such that $\mathcal{B} =  \mathcal{P}([n] \setminus \left\{j\right\}) \uplus \mathcal{A}$ for all $j \in [n]$. Denote by $Z(\mathcal{A})$  the set of integer vectors contained in the polyhedron defined by $\sum_{i\in [n]}z_i \geq 1$, $\sum_{S\in \mathcal{B}}\left(\sum_{i \in S}z_i - \sum_{i \notin S}z_i \right) \geq 0$ for all $\mathcal{B}$ as above, and $ 0 \leq z_i$ for all $i \in [n]$. Suppose $Z(\mathcal{A})$ is nonempty. Does this imply that there exists a feasible solution of $I^{\mathcal{A}}$?
\end{quest}
Given a set $\mathcal{A}$ that yields a positive answer to Question~\ref{vaughanq}, we can scale the resulting vector $y$ and (after arbitrarily increasing some entries if necessary) arrive, following the proof of Corollary~\ref{integer}, at a vector $z$ that gives a positive answer to Question~\ref{morrisq}.
\begin{obs}\label{vaugmorr}
A positive answer to Question~\ref{vaughanq} for a given $\mathcal{A}$ implies a positive answer to Question~\ref{morrisq} for the same $\mathcal{A}$.
\end{obs}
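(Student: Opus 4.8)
The plan is to take ``a positive answer to Question~\ref{vaughanq} (resp.\ \ref{morrisq}) for a given $\mathcal{A}$'' in its literal sense, namely that the implication posed by the question holds for that $\mathcal{A}$. Write $C$ for ``$\mathcal{A}$ is an FC-family''; by Proposition~\ref{equivalence} together with Poonen's Theorem and Corollary~\ref{rqz}, the two conclusions ``$P^{\mathcal{A}}\neq\emptyset$'' and ``$I^{\mathcal{A}}$ is feasible'' are both \emph{this same} property $C$. Abbreviating Vaughan's hypothesis by $\mathrm{VH}$ (existence of $y\ge 0$ with $\sum_i y_i\le 1$ and $\sum_i y_i|\mathcal{B}^{j}_i|=|\mathcal{B}^{j}|/2$ for all $j$, where $\mathcal{B}^{j}:=\mathcal{P}([n]\setminus\{j\})\uplus\mathcal{A}$) and Morris's hypothesis by $\mathrm{MH}$ ($Z(\mathcal{A})\neq\emptyset$), the observation becomes the meta-implication $(\mathrm{VH}\Rightarrow C)\Rightarrow(\mathrm{MH}\Rightarrow C)$. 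Because the two conclusions coincide, this collapses to the single lemma $\mathrm{MH}\Rightarrow\mathrm{VH}$: granting it, if $\mathcal{A}$ is Vaughan-positive and $\mathrm{MH}$ holds, then $\mathrm{VH}$ holds, hence $C$ holds, hence $\mathcal{A}$ is Morris-positive. In contrapositive form the statement reads ``every Morris counterexample is a Vaughan counterexample'', and since such a counterexample satisfies $\neg C$ on both sides, only $\mathrm{MH}\Rightarrow\mathrm{VH}$ remains. I emphasize that this is the \emph{reverse} of the easy scaling direction $\mathrm{VH}\Rightarrow\mathrm{MH}$, which by the same bookkeeping proves merely the converse observation.

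For the lemma I would first normalize. Given an integer $z\in Z(\mathcal{A})$, put $w:=z/\sum_i z_i$, so $w\ge 0$, $\sum_i w_i=1$, and, by the identity in the proof of Corollary~\ref{weight}, $\sum_i w_i|\mathcal{B}^{j}_i|\ge|\mathcal{B}^{j}|/2$ for every $j\in[n]$. Collect these $n$ relations as $Mw\ge h$, where $M:=\big[\,|\mathcal{B}^{j}_i|\,\big]_{j,i}$ and $h:=\big(|\mathcal{B}^{j}|/2\big)_{j}$. I now want a Vaughan witness, i.e.\ $y\ge 0$ with $\sum_i y_i\le 1$ and $My=h$ (all relations tight). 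The mechanism I would use is a nonnegative dual multiplier: suppose there is $\rho\ge 0$ with $\rho^{\mathsf T}M=\mathbf{1}^{\mathsf T}$, and suppose the equality system $My=h$ has a solution $y^\ast\ge 0$. Then for the normalized $w$ above, $\rho^{\mathsf T}h\le\rho^{\mathsf T}(Mw)=\mathbf{1}^{\mathsf T}w=1$, while $\mathbf{1}^{\mathsf T}y^\ast=\rho^{\mathsf T}(My^\ast)=\rho^{\mathsf T}h$; hence $y^\ast\ge 0$, $My^\ast=h$, and $\sum_i y^\ast_i=\rho^{\mathsf T}h\le 1$, which is exactly $\mathrm{VH}$.

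The whole argument therefore rests on two structural facts about the \emph{specific} families $\mathcal{B}^{j}=\mathcal{P}([n]\setminus\{j\})\uplus\mathcal{A}$, and verifying them is the step I expect to be the main obstacle: (i) $\mathbf{1}$ lies in the cone of the rows of $M$, i.e.\ there is $\rho\ge 0$ with $\sum_{j}\rho_j|\mathcal{B}^{j}_i|=1$ for all $i$; and (ii) the exact system $\sum_i y_i|\mathcal{B}^{j}_i|=|\mathcal{B}^{j}|/2$ admits a nonnegative solution. These are inverse-positivity-type statements, and mere entrywise nonnegativity of $M$ is \emph{not} enough to force the inequalities $Mw\ge h$ down to equalities; one must exploit the combinatorial description of $\mathcal{B}^{j}$ (all subsets missing $j$, together with those sets containing $j$ that already include some member of $\mathcal{A}$ through $j$), and, in the only nontrivial case, one may additionally assume $\neg C$. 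I would attempt (i) by constructing $\rho$ explicitly from counts attached to the $\mathcal{B}^{j}$, and (ii) by showing the solution of the system $My=h$ (invertible in the cases at hand) is forced to be nonnegative by the same counts; this conversion of the $n$ relaxed inequalities into $n$ exact equalities is the crux.
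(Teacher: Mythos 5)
Your reading of the Observation is not the one the paper uses, and this matters for what has to be proved. In the paper, ``a positive answer to Question~\ref{vaughanq} for a given $\mathcal{A}$'' means that the \emph{hypothesis} of that question is satisfied by $\mathcal{A}$ --- Vaughan's system has a solution $y\ge 0$ with $\sum_{i}y_i\le 1$ --- not that the implication posed by the question holds for $\mathcal{A}$. This is visible from the sentence immediately preceding the Observation, which is the paper's entire proof and speaks of scaling ``the resulting vector $y$''. Under that reading the Observation is exactly the scaling direction $\mathrm{VH}\Rightarrow\mathrm{MH}$ that you set aside as proving ``merely the converse observation'': writing, as you do, $M=\bigl[\,|\mathcal{B}^{j}_i|\,\bigr]_{j,i}$ and $h=\bigl(|\mathcal{B}^{j}|/2\bigr)_{j}$, any $y\ge 0$ with $My=h$ and $\mathbf{1}^{\mathsf T}y\le 1$ satisfies, for every $j$, $2\sum_i y_i|\mathcal{B}^{j}_i|-|\mathcal{B}^{j}|\sum_i y_i=|\mathcal{B}^{j}|\bigl(1-\sum_i y_i\bigr)\ge 0$, and these are precisely Morris's inequalities; taking a rational such $y$ (Proposition~\ref{FM}) and multiplying by the least common multiple of its denominators, as in the proof of Corollary~\ref{integer}, gives an integer point of $Z(\mathcal{A})$. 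That one-line computation is the whole content of the paper's Observation.

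Under your literal meta-reading the statement does reduce to $\mathrm{MH}\wedge\neg C\Rightarrow\mathrm{VH}$, and there your proposal has a genuine gap: the entire burden is shifted onto the ``structural facts'' (i) and (ii), which you do not prove and which nothing in the paper supplies. Indeed (ii) --- nonnegative solvability of the equality system $My=h$ --- is essentially the existence content of $\mathrm{VH}$ itself, so assuming it and deriving only the norm bound $\mathbf{1}^{\mathsf T}y^{\ast}\le 1$ from (i) comes close to assuming the conclusion; nothing forces the $n$ relaxed inequalities $Mw\ge h$ satisfied by a normalized Morris vector to be attainable as equalities by a nonnegative vector. Your instinct about which direction is actually \emph{needed} later is not baseless: the first proof of Corollary~\ref{counterVaughan} uses the Observation in your direction (passing from a Morris counterexample to a Vaughan counterexample), which is presumably why the paper also gives the alternative proof exhibiting an explicit Vaughan solution for $\mathcal{D}$ in the appendix. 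But judged against the paper's own proof, your proposal runs in the opposite direction to the paper's argument and leaves its own key lemma unestablished.
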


Thus, considering the above, we can explicitly describe the structure associated with the Non\textendash FC-family that leads to the counterexample in Corollary~\ref{counterexample}. As above, it suffices to consider $\mathcal{B} \subseteq \mathcal{P}([n])$ such that $\mathcal{B} =  \mathcal{P}([n] \setminus \left\{j\right\}) \uplus \mathcal{A}$ for all $j \in [n]$, where $\mathcal{A}$ is our given UC family. This greatly simplifies the tedious task of checking that the algorithm's output is correct. Once the family is constructed according the given $\mathcal{B}$, it becomes straightforward to check that the necessary conditions for correctness are met. 

Given that the empty set does not make a difference in determining whether a UC family $\mathcal{A}$ is FC or Non\textendash FC, as we saw in Proposition~\ref{emptynot}, we may think the condition $\emptyset \in \mathcal{A}$ in the questions of Vaughan and Morris can be relaxed. If this were the case, the structure of the considered $\mathcal{B}$ with $\emptyset \not \in \mathcal{A}$ is again simplified, since the cardinality of the new family is at most the cardinality of the original one. Unfortunately, as we shall see, this is not the case. Still, in the next proposition, we show that a nonempty $Z(\mathcal{A})$ implies that a set of integer vectors contained in a polyhedron arising from ``smaller" structures is also nonempty.  

\begin{prop}\label{smallerstruct}
Let $\mathcal{A}$ be a UC family such that $U(\mathcal{A})=[n]$ and $\emptyset \in \mathcal{A}$. Suppose $Z(\mathcal{A})$ is nonempty. Consider $\mathcal{G} \subseteq \mathcal{P}([n])$ such that $\mathcal{G}=( \mathcal{P}([n] \setminus \left\{j\right\}) \uplus \mathcal{A}) \setminus  \mathcal{P}([n] \setminus \left\{j\right\})$ for all $j \in [n]$. Then the set of integer vectors contained in the polyhedron defined by $\sum_{i\in [n]}z_i \geq 1$, $\sum_{S\in \mathcal{G}}\left(\sum_{i \in S}z_i - \sum_{i \notin S}z_i \right) \geq 0$ for all $\mathcal{G}$ as above, and $ 0 \leq z_i$ for all $i \in [n]$, is nonempty. 
\end{prop}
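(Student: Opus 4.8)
The plan is to show that the very integer vector witnessing the nonemptiness of $Z(\mathcal{A})$ already lies in the smaller polyhedron, so that no fresh construction is needed. Fix $j \in [n]$ and write $\mathcal{B} = \mathcal{P}([n]\setminus\{j\}) \uplus \mathcal{A}$. Since $\emptyset \in \mathcal{A}$, every $C \subseteq [n]\setminus\{j\}$ satisfies $C = C \cup \emptyset \in \mathcal{B}$, so $\mathcal{P}([n]\setminus\{j\}) \subseteq \mathcal{B}$; by definition $\mathcal{G} = \mathcal{B}\setminus \mathcal{P}([n]\setminus\{j\})$, whence $\mathcal{B}$ is the disjoint union of $\mathcal{P}([n]\setminus\{j\})$ and $\mathcal{G}$. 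Consequently the defining expression splits additively:
\[
\sum_{S\in\mathcal{B}}\Big(\sum_{i\in S}z_i - \sum_{i\notin S}z_i\Big) = \sum_{S\in\mathcal{P}([n]\setminus\{j\})}\Big(\sum_{i\in S}z_i - \sum_{i\notin S}z_i\Big) + \sum_{S\in\mathcal{G}}\Big(\sum_{i\in S}z_i - \sum_{i\notin S}z_i\Big).
\]

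The key step is a counting identity for the middle sum. I would argue that for each $i \neq j$ exactly half of the $2^{n-1}$ subsets of $[n]\setminus\{j\}$ contain $i$, so the net coefficient of $z_i$ is $2^{n-2} - 2^{n-2} = 0$; on the other hand $j$ lies outside every one of these $2^{n-1}$ subsets, contributing $-2^{n-1}z_j$. Hence
\[
\sum_{S\in\mathcal{P}([n]\setminus\{j\})}\Big(\sum_{i\in S}z_i - \sum_{i\notin S}z_i\Big) = -2^{n-1}z_j.
\]

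Combining these, for any $z\in Z(\mathcal{A})$ the $\mathcal{G}$-constraint becomes
\[
\sum_{S\in\mathcal{G}}\Big(\sum_{i\in S}z_i - \sum_{i\notin S}z_i\Big) = \sum_{S\in\mathcal{B}}\Big(\sum_{i\in S}z_i - \sum_{i\notin S}z_i\Big) + 2^{n-1}z_j \geq 2^{n-1}z_j \geq 0,
\]
where the first inequality uses that $z$ satisfies the $\mathcal{B}$-constraint (since $z \in Z(\mathcal{A})$) and the second uses $z_j \geq 0$. As this holds for every $j \in [n]$, and $z$ already satisfies $\sum_{i\in[n]}z_i \geq 1$ and $z_i \geq 0$, the same $z$ lies in the polyhedron defined by the $\mathcal{G}$-inequalities; thus its integer-vector set is nonempty.

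I do not expect a genuine obstacle here: the whole argument reduces to the elementary identity above, which in effect says that the $\mathcal{G}$-constraints are relaxations of the $\mathcal{B}$-constraints. The only points requiring care are the disjointness of the decomposition (which is exactly where $\emptyset \in \mathcal{A}$ is used) and the sign bookkeeping in the $\sum_{i\in S}z_i - \sum_{i\notin S}z_i$ term when isolating the excluded element $j$.
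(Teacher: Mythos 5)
Your proof is correct and follows essentially the same route as the paper: both decompose $\mathcal{B}=\mathcal{P}([n]\setminus\{j\})\uplus\mathcal{A}$ into the disjoint pieces $\mathcal{P}([n]\setminus\{j\})$ and $\mathcal{G}$ (using $\emptyset\in\mathcal{A}$) and observe that the $\mathcal{P}([n]\setminus\{j\})$ part contributes a nonpositive amount, so the $\mathcal{G}$-inequality is a relaxation of the $\mathcal{B}$-inequality. Your exact identity $\sum_{S\in\mathcal{P}([n]\setminus\{j\})}\bigl(\sum_{i\in S}z_i-\sum_{i\notin S}z_i\bigr)=-2^{n-1}z_j$ is just the unnormalized form of the paper's bound $\sum_{i\neq j}2\bar z_i|\mathcal{D}_i|=|\mathcal{D}|\sum_{i\neq j}\bar z_i\leq|\mathcal{D}|$, and working with the integer vector directly even avoids the paper's implicit appeal to homogeneity after normalizing.
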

\begin{proof}
Let $\mathcal{A}$ be a UC family such that $U(\mathcal{A})=[n]$ and $\emptyset \in \mathcal{A}$. Furthermore, let $\mathcal{B} \subseteq \mathcal{P}([n])$ such that $\mathcal{B} = \mathcal{P}([n] \setminus \left\{1\right\}) \uplus \mathcal{A}$. Since $\emptyset \in \mathcal{A}$, it follows that $ \mathcal{P}([n] \setminus \left\{1\right\})\subset \mathcal{B}$. Define $\mathcal{D}:=  \mathcal{P}([n] \setminus \left\{1\right\})$, $\mathcal{G}:= \mathcal{B} \setminus \mathcal{D}$. Suppose that $Z(\mathcal{A})$ is nonempty and $z \in Z(\mathcal{A})$. Define $\bar{z}$ as $z$ normalized by its $\ell_1$ norm. Thus we arrive at $\bar{z}_i \in \mathbb{Q}_{\geq 0}$ for all $i \in [n]$ and $ \sum_{i\in [n]} \bar{z}_i = 1$. Following the proof of Corollary~\ref{weight} we arrive at
\begin{flalign*}
\sum_{i\in [n]}2\bar{z}_i|\mathcal{B}_i| \geq |\mathcal{B}|
&\Longleftrightarrow \sum_{i\in [n]} 2\bar{z}_i|\mathcal{G}_i| + \sum_{i\in [n] \setminus \left\{1\right\}} 2\bar{z}_i|\mathcal{D}_i| \geq |\mathcal{G}| + |\mathcal{D}|\\
&\implies \sum_{i\in [n]} 2\bar{z}_i|\mathcal{G}_i| \geq |\mathcal{G}|.\\
\end{flalign*}\\
In the last implication we use $ \sum_{i\in [n] \setminus \left\{1\right\}} \bar{z}_i \leq 1$, with $\bar{z}_i \geq 0$ for all $i \in [n] \setminus \left\{1\right\}$. Furthermore, $\mathcal{D} = \mathcal{P}([n]\setminus \left\{1\right\})$ implies that $|\mathcal{D}_i|= 2^{n-2}$ for all $i \in [n] \setminus \left\{1\right\}$ and therefore
\begin{flalign*}
\sum_{i\in [n] \setminus \left\{1\right\}} 2\bar{z}_i|\mathcal{D}_i| &= |\mathcal{D}|\sum_{i\in [n] \setminus \left\{1\right\}}\bar{z}_i \leq |\mathcal{D}|.\\
\end{flalign*}\\
Since the same argument applies to $\mathcal{B} =  \mathcal{P}([n] \setminus \left\{j\right\}) \uplus \mathcal{A}$ for all $j \in [n]$, the desired result follows. 
\end{proof}
As we shall see next, a nonempty $Z(\mathcal{A}\setminus \left\{\emptyset\right\})$ does not necessarily imply a nonempty $Z(\mathcal{A})$.
\begin{prop}\label{revimp}
Let $\mathcal{A}$ be a UC family such that $U(\mathcal{A})=[n]$ and $\emptyset \in \mathcal{A}$. A nonempty $Z(\mathcal{A}\setminus \left\{\emptyset\right\})$ does not necessarily imply a nonempty $Z(\mathcal{A})$.
\end{prop}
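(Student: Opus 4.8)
The plan is to settle the non-implication by exhibiting a single explicit family, and the natural candidate is the Non--FC family $\mathcal{A} := \gen{\mathcal{S}}$ on $[6]$ from Section~\ref{sec:Generators}. I would show that for this $\mathcal{A}$ we have $Z(\mathcal{A}) = \emptyset$ while $Z(\mathcal{A}\setminus\{\emptyset\}) \neq \emptyset$, which is precisely a witness that a nonempty $Z(\mathcal{A}\setminus\{\emptyset\})$ need not force a nonempty $Z(\mathcal{A})$.

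First I would verify that $Z(\mathcal{A})$ is empty. As noted at the beginning of this section, the six UC families displayed in the proof of Proposition~\ref{nonfcounter} are exactly the families $\mathcal{B}_j = \mathcal{P}([6]\setminus\{j\})\uplus\mathcal{A}$, $j\in[6]$, that define $Z(\mathcal{A})$, and the Farkas duals given there certify that the real system consisting of $\sum_{i\in[6]} y_i = 1$, $y_i \geq 0$, and $\sum_{i\in[6]} y_i|\mathcal{B}_{j,i}| \geq |\mathcal{B}_j|/2$ for all $j$ is infeasible. If $Z(\mathcal{A})$ contained an integer point $z$, then (since $\sum_i z_i \geq 1 > 0$) its normalization $z/\sum_i z_i$ would be a nonnegative rational point of coordinate sum $1$ satisfying every homogeneous inequality $\sum_{S\in\mathcal{B}_j}(\sum_{i\in S} z_i - \sum_{i\notin S} z_i)\geq 0$; by the identity in the proof of Corollary~\ref{weight} this is equivalent to satisfying all six Poonen inequalities, contradicting the infeasibility just recalled. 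Hence $Z(\mathcal{A}) = \emptyset$.

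Next I would show $Z(\mathcal{A}\setminus\{\emptyset\}) \neq \emptyset$. Here I would compute the six reduced families $\mathcal{B}'_j = \mathcal{P}([6]\setminus\{j\})\uplus(\mathcal{A}\setminus\{\emptyset\})$ explicitly and then exhibit a single nonnegative integer vector $z$ with $\sum_{i\in[6]} z_i \geq 1$ for which $\sum_{S\in\mathcal{B}'_j}(\sum_{i\in S} z_i - \sum_{i\notin S} z_i)\geq 0$ holds for every $j\in[6]$; verifying these six inequalities on the displayed $z$ finishes the argument. The guiding intuition, made quantitative in Proposition~\ref{smallerstruct}, is that the presence of $\emptyset$ forces the entire block $\mathcal{P}([6]\setminus\{j\})$ into each $\mathcal{B}_j$, and it is exactly this large, perfectly balanced block that renders the $\mathcal{B}_j$-system infeasible; deleting $\emptyset$ strips it away, shrinking each family to $\mathcal{B}'_j$ and weakening every constraint, so a feasible weight vector should reappear.

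The main obstacle is this last step: there is no structural shortcut guaranteeing that $Z(\mathcal{A}\setminus\{\emptyset\})$ is nonempty, so I would have to produce an actual vector $z$ and check the six inequalities by hand (or certify it with the exact solver). Producing that vector -- equivalently, confirming computationally that removing $\emptyset$ genuinely loosens the relaxation enough to regain feasibility for this particular $\mathcal{A}$ -- is the computational heart of the proof, whereas the emptiness of $Z(\mathcal{A})$ comes essentially for free from Proposition~\ref{nonfcounter}.
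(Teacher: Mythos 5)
Your overall strategy---exhibit a single family $\mathcal{A}$ with $Z(\mathcal{A})=\emptyset$ and $Z(\mathcal{A}\setminus\{\emptyset\})\neq\emptyset$---is the right one, and your first half is sound: the six families in the proof of Proposition~\ref{nonfcounter} are indeed the $\mathcal{P}([6]\setminus\{j\})\uplus\gen{\mathcal{S}}$ for $j\in[6]$, and normalizing a hypothetical integer point of $Z(\gen{\mathcal{S}})$ would contradict the Farkas certificate displayed there. The problem is the second half, which is the actual substance of the proposition: you announce that you \emph{would} compute the six reduced families and exhibit a witness vector, but you never produce one, nor any argument that one exists. As written, the non-implication is not established---a counterexample proof is only complete once the witness is in hand---so this is a genuine gap.

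The gap is easy to close, and your remark that ``there is no structural shortcut guaranteeing that $Z(\mathcal{A}\setminus\{\emptyset\})$ is nonempty'' is exactly wrong: the shortcut exists and is the entire content of the paper's own proof of this half. Every nonempty member of $\gen{\mathcal{S}}$ has cardinality at least $3=6/2$, so every set of $\mathcal{B}'_j=\mathcal{P}([6]\setminus\{j\})\uplus(\gen{\mathcal{S}}\setminus\{\emptyset\})$ is a superset of such a member and also has cardinality at least $3$. Hence for the all-ones vector $z$ one gets $\sum_{S\in\mathcal{B}'_j}\left(\sum_{i\in S}z_i-\sum_{i\notin S}z_i\right)=\sum_{S\in\mathcal{B}'_j}(2|S|-6)\geq 0$ for every $j$, so $(1,\ldots,1)\in Z(\gen{\mathcal{S}}\setminus\{\emptyset\})$ and no computation is needed. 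For comparison, the paper proves the proposition with a different witness, the five-element family $\gen{\left\{\emptyset,\{1,2,3\},\{1,4,5\},\{1,2,3,4\},\{1,2,3,5\},\{1,2,4,5\}\right\}}$: it cites Morris for $Z(\mathcal{A})=\emptyset$ and uses precisely this ``more ones than zeros'' observation to place the all-ones vector in $Z(\mathcal{A}\setminus\{\emptyset\})$. Your six-element choice would also work and has the minor advantage of being self-contained (emptiness of $Z(\mathcal{A})$ follows from the paper's own Farkas certificate rather than an external citation), but only once the missing witness is supplied.
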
 
\begin{proof}
Let $\mathcal{S}:=\left\{ \emptyset, \left\{1,2,3\right\}, \left\{1,4,5\right\}, \left\{1,2,3,4\right\}, \left\{1,2,3,5\right\}, \left\{1,2,4,5\right\} \right\} \subset \mathcal{P}([5])$ and let $\widetilde{\mathcal{S}}:= \mathcal{S} \setminus  \left\{\emptyset\right\}$. Let $\mathcal{A}:=\gen{\mathcal{S}}$ and $\widetilde{\mathcal{A}}:= \gen{\widetilde{\mathcal{S}}}$. Morris \cite{Morris} proved that $Z(\mathcal{A})$ is empty. We show that $Z(\widetilde{\mathcal{A}})$ is nonempty. Observe that if we write each set in $\widetilde{\mathcal{A}}$ as a column of an $n\times m$ binary matrix $M$, we have more entries with ones than zeros. We conclude similarly for $\mathcal{B} \subseteq \mathcal{P}([n])$ such that $\mathcal{B} =  \mathcal{P}([n] \setminus \left\{j\right\}) \uplus \widetilde{\mathcal{A}}$ for all  $j \in [n]$. Hence, the (component-wise) all one vector is contained in $Z(\widetilde{\mathcal{A}})$. 
\end{proof}
\begin{cor}
The reverse implication in Proposition \ref{smallerstruct} does not necessarily hold.
\end{cor}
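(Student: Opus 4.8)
The plan is to refute the converse of Proposition~\ref{smallerstruct}, i.e.\ the implication ``the smaller-structure (\,$\mathcal{G}$\,) polyhedron is nonempty $\Rightarrow$ $Z(\mathcal{A})$ is nonempty,'' by producing a single UC family $\mathcal{A}$ with $\emptyset\in\mathcal{A}$ and $U(\mathcal{A})=[n]$ for which the $\mathcal{G}$-polyhedron is nonempty while $Z(\mathcal{A})=\emptyset$. The ready-made candidate is $\mathcal{A}=\gen{\mathcal{S}}$ from Proposition~\ref{revimp}, because Morris already showed, as recorded in Proposition~\ref{revimp}, that $Z(\mathcal{A})$ is empty. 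So the entire task reduces to checking that the ``smaller-structure'' polyhedron attached to this $\mathcal{A}$ is nonempty.

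First I would put each family $\mathcal{G}^{(j)}:=(\mathcal{P}([n]\setminus\{j\})\uplus\mathcal{A})\setminus\mathcal{P}([n]\setminus\{j\})$ into a convenient form. Since $\emptyset\in\mathcal{A}$, a set $P\cup W$ with $P\subseteq[n]\setminus\{j\}$ and $W\in\mathcal{A}$ survives the removal of $\mathcal{P}([n]\setminus\{j\})$ precisely when it contains $j$, which forces $j\in W$. Hence $\mathcal{G}^{(j)}=\{\,S\subseteq[n]: j\in S \text{ and } W\subseteq S \text{ for some } W\in\mathcal{A} \text{ with } j\in W\,\}$, i.e.\ $\mathcal{G}^{(j)}=\mathcal{P}([n]\setminus\{j\})\uplus\mathcal{A}_j$ with $\mathcal{A}_j=\{W\in\mathcal{A}:j\in W\}$, which is nonempty because $U(\mathcal{A})=[n]$.

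The key observation is that every $\mathcal{G}^{(j)}$ is an \emph{up-set}: if $S\in\mathcal{G}^{(j)}$ and $S\subseteq S'$, then $j\in S'$ and the witnessing $W$ still satisfies $W\subseteq S'$, so $S'\in\mathcal{G}^{(j)}$. For any up-set $\mathcal{U}$ and any coordinate $i$, the map $S\mapsto S\cup\{i\}$ injects $\{S\in\mathcal{U}:i\notin S\}$ into $\{S\in\mathcal{U}:i\in S\}$, so $i$ is present in at least as many members of $\mathcal{U}$ as it is absent from. Evaluating the $\mathcal{G}$-constraint at the all-ones vector $z=\mathbf{1}$ then gives $\sum_{S\in\mathcal{G}^{(j)}}\bigl(\sum_{i\in S}1-\sum_{i\notin S}1\bigr)=(\#\text{ones})-(\#\text{zeros})\ge 0$ for each $j$, while $\sum_i z_i=n\ge 1$ and $z_i\ge 0$ hold trivially. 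Thus $\mathbf{1}$ lies in the $\mathcal{G}$-polyhedron, so it is nonempty, and together with $Z(\mathcal{A})=\emptyset$ this establishes that the reverse implication fails.

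I expect the only real difficulty to be spotting the right structural property; once the rewriting $\mathcal{G}^{(j)}=\mathcal{P}([n]\setminus\{j\})\uplus\mathcal{A}_j$ and the up-set argument are in hand, nonemptiness is immediate and requires no delicate counting tailored to the specific example. In fact this argument shows the $\mathcal{G}$-polyhedron is nonempty for \emph{every} admissible $\mathcal{A}$, so the particular family of Proposition~\ref{revimp} is needed only to supply an $\mathcal{A}$ with $Z(\mathcal{A})=\emptyset$; any UC family known to have empty $Z$ would serve equally well.
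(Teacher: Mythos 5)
Your proof is correct and follows essentially the same route as the paper: take $\mathcal{A}=\gen{\mathcal{S}}$ from Proposition~\ref{revimp}, for which $Z(\mathcal{A})=\emptyset$, and certify nonemptiness of the $\mathcal{G}$-polyhedron with the all-ones vector by noting that each $\mathcal{G}$ has at least as many one-entries as zero-entries. The only difference is that the paper verifies the ones-versus-zeros count by inspection of the specific family, whereas your up-set argument (via $\mathcal{G}=\mathcal{P}([n]\setminus\{j\})\uplus\mathcal{A}_j$) proves it for every admissible $\mathcal{A}$ --- a small but genuine sharpening.
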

\begin{proof}
Follows directly from the proof of Proposition~\ref{revimp} where we exhibit an $\mathcal{A}$ such that $\emptyset \in \mathcal{A}$ and $Z(\mathcal{A})$ is empty. Then for each $j \in [n]$ we see that the binary matrix that represents $\mathcal{G}=( \mathcal{P}([n] \setminus \left\{j\right\}) \uplus \mathcal{A}) \setminus  \mathcal{P}([n] \setminus \left\{j\right\})$ has more entries with ones than zeros. 
\end{proof}
Finally, we give a negative answer to Morris' question, and also Vaughan's question.\vspace{0.75cm} \\ Let $\mathcal{S}:=\left\{\emptyset, \left\{2,3,4,6,7\right\}, \left\{1,2,3,4\right\}, \left\{1,3,4,6\right\}, \left\{5,6,7\right\}, \left\{3,4,7\right\} \right\} \subset \mathcal{P}([7])$. Furthermore, define $\mathcal{D}:=\gen{\mathcal{S}}$.
\begin{prop} \label{morrcount1}
$Z(\mathcal{D})$ is nonempty.
\end{prop}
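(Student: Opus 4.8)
The statement asserts only that $Z(\mathcal{D})$ is nonempty, so the plan is entirely constructive: I would exhibit a single nonnegative integer vector $z$ with $\sum_{i\in[7]}z_i\ge 1$ that satisfies the seven defining inequalities of $Z(\mathcal{D})$, and then verify it by direct substitution. Recall that each constraint $\sum_{S\in\mathcal{B}}\left(\sum_{i\in S}z_i-\sum_{i\notin S}z_i\right)\ge 0$ rewrites, exactly as in the proof of Corollary~\ref{weight}, to $\sum_{i\in[7]}z_i\left(2|\mathcal{B}_i|-|\mathcal{B}|\right)\ge 0$. Hence the whole system is governed by seven integer coefficient vectors $w^{(j)}$ with $w^{(j)}_i=2|(\mathcal{B}^{(j)})_i|-|\mathcal{B}^{(j)}|$, where $\mathcal{B}^{(j)}=\mathcal{P}([7]\setminus\{j\})\uplus\mathcal{D}$ for $j\in[7]$.

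First I would compute $\mathcal{D}=\gen{\mathcal{S}}$ by closing $\mathcal{S}$ under unions; this yields a finite list of subsets of $[7]$ and simultaneously confirms that $\mathcal{S}$ is its minimal generator. Next, for each $j\in[7]$ I would build the test family $\mathcal{B}^{(j)}=\{P\cup D : P\subseteq[7]\setminus\{j\},\ D\in\mathcal{D}\}$; each $\mathcal{B}^{(j)}$ is automatically UC and satisfies $\mathcal{B}^{(j)}\uplus\mathcal{D}=\mathcal{B}^{(j)}$, and since $\emptyset\in\mathcal{D}$ it contains all of $\mathcal{P}([7]\setminus\{j\})$. From these I would read off $|\mathcal{B}^{(j)}|$ and each coverage count $|(\mathcal{B}^{(j)})_i|$, producing the seven rows $w^{(j)}$. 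The remaining task is to find a lattice point in the cone cut out by $\{\langle w^{(j)},z\rangle\ge 0\}_{j}$ together with $z\ge 0$ and $\sum_{i}z_i\ge 1$, which I would do by solving this small integer feasibility problem and recording the resulting $z$ explicitly.

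To steer the search and make the final check transparent, I would exploit the evident symmetry of $\mathcal{S}$: elements $3$ and $4$ lie in exactly the same generators, so the transposition $3\leftrightarrow 4$ is an automorphism of $\mathcal{S}$, hence of $\mathcal{D}$, and it merely swaps $\mathcal{B}^{(3)}$ with $\mathcal{B}^{(4)}$. The region $Z(\mathcal{D})$ is therefore invariant under the induced coordinate swap, so it is safe to restrict to witnesses with $z_3=z_4$; this halves the effective dimension and tends to single out a clean solution. The main obstacle is not conceptual but purely combinatorial bookkeeping: the families $\mathcal{B}^{(j)}$ can be large (up to $2^6=64$ subsets avoiding $j$, unioned against $\mathcal{D}$), so the counts $|(\mathcal{B}^{(j)})_i|$ must be tabulated carefully\textemdash precisely the computation that Algorithm~\ref{row generation} automates and that we recheck by hand. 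Once $z$ is fixed, the proof collapses to verifying the seven integer inequalities $\langle w^{(j)},z\rangle\ge 0$ and $\sum_{i}z_i\ge 1$, which is immediate.
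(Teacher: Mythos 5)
Your plan is exactly the paper's proof: the paper writes down the seven inequalities $\sum_{i\in[7]}z_i\left(2|(\mathcal{B}^{(j)})_i|-|\mathcal{B}^{(j)}|\right)\ge 0$ for $\mathcal{B}^{(j)}=\mathcal{P}([7]\setminus\{j\})\uplus\mathcal{D}$, $j\in[7]$, and exhibits the witness $(7,5,12,12,10,14,16)\in\mathbb{Z}^7_{\ge 0}$. Your $3\leftrightarrow 4$ symmetry observation is correct and is borne out both by the paper's two mirror-image inequalities for $j=3,4$ and by the witness having $z_3=z_4=12$; the only thing your write-up defers is the explicit tabulation and the final vector itself, which is the entire content to be displayed.
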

\begin{proof}
We simply write down the relevant inequalities and exhibit a vector in $Z(\mathcal{D})$. The order of display matches $j$ in $\mathcal{B} =  \mathcal{P}([7] \setminus \left\{j\right\}) \uplus \mathcal{D}$ for each $j \in [7]$.
\begin{center}
\begin{tabular}{@{}l@{}}
$\tt{-52z_{1} +4z_{2} +12z_{3} +12z_{4} +4z_{6}\geq 0}$ \\
$\tt{+6z_{1} -54z_{2} +10z_{3} +10z_{4} +2z_{6} +2z_{7}\geq 0}$\\
$\tt{\color{red}{+6z_{1} +2z_{2} -42z_{3} +22z_{4} +2z_{6} +10z_{7}\geq 0}}$\\
$\tt{\color{red}{+6z_{1} +2z_{2} +22z_{3} -42z_{4} +2z_{6} +10z_{7}\geq 0}}$\\
$\tt{-48z_{5} +16z_{6} +16z_{7}\geq 0}$\\
$\tt{+5z_{1} +1z_{2} +7z_{3} +7z_{4} +13z_{5} -41z_{6} +15z_{7}\geq 0}$\\
$\tt{+12z_{3} +12z_{4} +12z_{5} +12z_{6} -36z_{7}\geq 0}$ \\

\end{tabular}
\end{center}
The vector $(7,5,12,12,10,14,16) \in \mathbb{Z}^7_{\geq 0}$ is contained in $Z(\mathcal{D})$.
\end{proof}
\begin{prop}\label{morrcount2}
$\mathcal{D}$ is a Non\textendash FC-family.
\end{prop}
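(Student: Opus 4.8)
The plan is to certify that $\mathcal{D}$ is Non--FC by showing that the Poonen polyhedron $P^{\mathcal{D}}$ is empty. By Corollary~\ref{rqz}, $\mathcal{D}$ is an FC-family if and only if $P^{\mathcal{D}}\neq\emptyset$, so it suffices to produce a certificate that no nonnegative weight vector $y$ with $\sum_{i\in[7]}y_i=1$ satisfies all of the inequalities~\eqref{poon}. Since $P^{\mathcal{D}}$ has a potentially exponential number of constraints, I would not attempt to rule out all of them at once; instead I would invoke Algorithm~\ref{row generation}, which by its correctness theorem terminates by declaring $\mathcal{D}$ Non--FC precisely when the running relaxation $H$ becomes empty, i.e.\ when a finite subsystem of~\eqref{poon} together with $\sum_{i\in[7]}y_i=1$ and $y_i\ge 0$ is already infeasible.

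Concretely, I would exhibit a finite list of UC families $\mathcal{B}^{(1)},\dots,\mathcal{B}^{(k)}\subseteq\mathcal{P}([7])$, each satisfying $\mathcal{B}^{(j)}\uplus\mathcal{D}=\mathcal{B}^{(j)}$, and display each one as a binary incidence matrix exactly as in the proof of Proposition~\ref{nonfcounter}. For each such family, union-closedness and the fixed-set condition $\mathcal{B}^{(j)}\uplus\mathcal{D}=\mathcal{B}^{(j)}$ can be checked directly from the columns of its matrix, and each family contributes the valid inequality $\sum_{i\in[7]}y_i|\mathcal{B}^{(j)}_i|\ge|\mathcal{B}^{(j)}|/2$. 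I would then supply an irreducible infeasible subset of these inequalities together with nonnegative Farkas multipliers for the inequalities and a free multiplier for the normalization $\sum_{i\in[7]}y_i=1$, so that the resulting combination collapses to $0\ge\varepsilon$ for some $\varepsilon>0$. By Farkas' Lemma this certifies $P^{\mathcal{D}}=\emptyset$, and hence $\mathcal{D}$ is Non--FC.

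The main obstacle---and the feature that makes Proposition~\ref{morrcount1} and Proposition~\ref{morrcount2} together a genuine counterexample to Question~\ref{morrisq}---is that the infeasibility certificate \emph{cannot} use only the $n=7$ distinguished families $\mathcal{B}=\mathcal{P}([7]\setminus\{j\})\uplus\mathcal{D}$. Indeed, Proposition~\ref{morrcount1} exhibits an explicit integer point in $Z(\mathcal{D})$, so the seven inequalities coming from those distinguished families are jointly satisfiable; any infeasible subsystem must therefore incorporate additional UC families discovered by the separation oracle $IP(\mathcal{D},c)$ during later iterations of Algorithm~\ref{row generation} (more than $n$ iterations, as anticipated in Section~\ref{sec:Relaxation}). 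The delicate part is thus twofold: first, driving the cutting-plane loop far enough that the accumulated cuts become infeasible while keeping the final certificate small; and second, verifying (by hand or by the independent exact and VIPR routines) that each newly generated $\mathcal{B}^{(j)}$ is genuinely union-closed and fixed by $\uplus\,\mathcal{D}$, since these are precisely the families lying outside Morris' restricted collection. Once such a list and its Farkas duals are in hand, infeasibility reduces to a routine arithmetic check, exactly as displayed for the six matrices in Proposition~\ref{nonfcounter}.
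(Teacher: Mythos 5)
Your proposal is correct and follows essentially the same route as the paper: run Algorithm~\ref{row generation} to accumulate a finite infeasible subsystem of Poonen inequalities coming from UC families $\mathcal{B}$ with $\mathcal{B}\uplus\mathcal{D}=\mathcal{B}$, and certify infeasibility with explicit Farkas multipliers, noting (as the paper does) that at least one family must lie outside the seven distinguished ones $\mathcal{P}([7]\setminus\{j\})\uplus\mathcal{D}$ because $Z(\mathcal{D})\neq\emptyset$. The only cosmetic difference is that the paper works with the homogeneous system $I^{\mathcal{D}}$ ($\sum_i z_i\ge 1$) rather than the normalized $P^{\mathcal{D}}$, and replaces the two inequalities for $j\in\{3,4\}$ by a single cut from the family $(\mathcal{P}([7]\setminus\{3\}\setminus\{4\})\uplus\mathcal{D})\cup\{\{1,3,4\},\{1,3,4,5\}\}$.
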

\begin{proof}
Using Algorithm~\ref{row generation} we exhibit a system of linear inequalities that is infeasible and the result follows from Corollary~\ref{rqz}. As a certificate of infeasibility we display Farkas dual values in square brackets before each inequality. Structurally, we see that the only difference between the UC families that generated this system of linear inequalities and the previous one are the red inequalities. In contrast to the other inequalities, the red one here is derived from the following UC family: $( \mathcal{P}([7]\setminus \left\{3\right\} \setminus \left\{4\right\}) \uplus \mathcal{D}) \cup \left\{ \left\{1,3,4\right\},\left\{1,3,4,5 \right\} \right\}$.
\begin{center}
\begin{tabular}{@{}l@{}}
$\tt{[1]: z_{1} +z_{2} +z_{3} + z_{4} +z_{5} +z_{6}+z_{7}\geq 1}$\\
$\tt{[19]: -52z_{1} +4z_{2} +12z_{3} +12z_{4} +4z_{6}\geq 0}$ \\
$\tt{[2]: +6z_{1} -54z_{2} +10z_{3} +10z_{4} +2z_{6} +2z_{7}\geq 0}$\\
$\tt{[109]:\color{red}{+8z_{1} -8z_{3} -8z_{4} +8z_{7}\geq 0}}$\\
$\tt{[16]:-48z_{5} +16z_{6} +16z_{7}\geq 0}$\\
$\tt{[20]:+5z_{1} +1z_{2} +7z_{3} +7z_{4} +13z_{5} -41z_{6} +15z_{7}\geq 0}$\\
$\tt{[40]: +12z_{3} +12z_{4} +12z_{5} +12z_{6} -36z_{7}\geq 0}$ \\
\end{tabular}
\end{center}
\end{proof}
\begin{cor}\label{morrcount3}
Let $\mathcal{A}$ be  a UC family such that $U(\mathcal{A})=[n]$ and $\emptyset \in \mathcal{A}$. A nonempty $Z(\mathcal{A})$ does not necessarily imply that there exists a feasible solution of $I^{\mathcal{A}}$.
\end{cor}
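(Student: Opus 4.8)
The plan is to establish this corollary as a concrete counterexample rather than through any abstract argument: I will exhibit one explicit UC family for which $Z(\mathcal{A})$ is nonempty while $I^{\mathcal{A}}$ admits no feasible solution. The natural candidate is the family $\mathcal{D} = \gen{\mathcal{S}}$ built just above, since the two preceding propositions have been arranged precisely to supply the two halves of the claim. Concretely, the corollary will follow by combining Proposition~\ref{morrcount1}, which gives $Z(\mathcal{D}) \neq \emptyset$ via the witness $(7,5,12,12,10,14,16)$, with Proposition~\ref{morrcount2}, which shows that $\mathcal{D}$ is Non\textendash FC.

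The only remaining bookkeeping is to translate ``$\mathcal{D}$ is Non\textendash FC'' into ``$I^{\mathcal{D}}$ has no feasible solution.'' For this I would invoke Proposition~\ref{equivalence}: a feasible solution of $I^{\mathcal{D}}$ exists if and only if $P^{\mathcal{D}}$ is nonempty. By Corollary~\ref{rqz} (equivalently, Poonen's Theorem), $P^{\mathcal{D}}$ is nonempty if and only if $\mathcal{D}$ is an FC-family. Since Proposition~\ref{morrcount2} asserts that $\mathcal{D}$ is Non\textendash FC, it follows that $P^{\mathcal{D}}$ is empty and hence $I^{\mathcal{D}}$ has no feasible solution. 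Thus $\mathcal{D}$ simultaneously satisfies $Z(\mathcal{D}) \neq \emptyset$ and infeasibility of $I^{\mathcal{D}}$, which is exactly what the statement requires, and the corollary is immediate.

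The substantive content, and therefore the main obstacle, lies entirely in Proposition~\ref{morrcount2} rather than in the short deduction above. The delicate point is structural: $Z(\mathcal{D})$ is constructed using only the $n$ families $\mathcal{B} = \mathcal{P}([n]\setminus\{j\})\uplus\mathcal{D}$, and the candidate vector satisfies all of the corresponding inequalities, so these $n$ constraints alone cannot force infeasibility. To certify Non\textendash FC one must instead find a richer UC family; comparing the two displayed systems, the sole change is the single red inequality, which arises not from any $\mathcal{P}([7]\setminus\{j\})\uplus\mathcal{D}$ but from the family $(\mathcal{P}([7]\setminus\{3\}\setminus\{4\})\uplus\mathcal{D})\cup\{\{1,3,4\},\{1,3,4,5\}\}$. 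Establishing that this additional constraint, together with the Farkas multipliers recorded in square brackets, yields a genuinely infeasible system is where the real argument sits; it is precisely what Algorithm~\ref{row generation} discovers by iterating past the first $n$ rounds. Once that infeasibility is certified, the corollary follows with no further calculation.
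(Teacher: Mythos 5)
Your proposal is correct and follows the paper's own route exactly: the paper proves this corollary by citing Proposition~\ref{morrcount1} and Proposition~\ref{morrcount2} and then invoking Corollary~\ref{rqz} to translate ``Non\textendash FC'' into infeasibility of $I^{\mathcal{D}}$, which is precisely your deduction (your detour through Proposition~\ref{equivalence} and $P^{\mathcal{D}}$ is just an expanded version of the same step). Your closing remarks correctly locate the substantive content in Proposition~\ref{morrcount2} and its extra (red) inequality, but that work is already done in the paper and is not part of this corollary's proof.
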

\begin{proof}
From Proposition~\ref{morrcount1} combined with Proposition~\ref{morrcount2}, followed by Corollary~\ref{rqz}. 
\end{proof}
\begin{cor}\label{counterVaughan}
Let $\mathcal{A}$ be a UC family such that $U(\mathcal{A})=[n]$ and $\emptyset \in \mathcal{A}$. A solution to the system of equations from Question~\ref{vaughanq} in $y \in  \mathbb{R}^n_{\geq 0}$ such that $\sum_{i\in [n]}y_i \leq 1$ does not necessarily imply that $P^{\mathcal{A}}$ is nonempty.
\end{cor}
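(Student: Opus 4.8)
The plan is to exhibit a single Non-FC family that nevertheless satisfies the premise of Question~\ref{vaughanq}; the natural candidate is the family $\mathcal{D}=\gen{\mathcal{S}}$ on $[7]$ already analysed for Morris' question. Two things must be verified: that $P^{\mathcal{D}}=\emptyset$, and that Vaughan's system of equalities has a nonnegative solution of $\ell_1$-norm at most $1$. The first is immediate: Proposition~\ref{morrcount2} gives that $\mathcal{D}$ is Non-FC, hence not FC, so by the equivalence $(1)\Leftrightarrow(2)$ of Corollary~\ref{rqz} no admissible weight vector exists, i.e.\ $P^{\mathcal{D}}=\emptyset$. Thus the conclusion of Question~\ref{vaughanq} fails for $\mathcal{D}$, and only its hypothesis remains to be established.

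Here lies the point that separates this corollary from Corollary~\ref{morrcount3}. Morris' set $Z(\mathcal{D})$ is cut out by the homogeneous inequalities $\sum_{S\in\mathcal{B}}(\sum_{i\in S}z_i-\sum_{i\notin S}z_i)\ge 0$, whereas Vaughan requires the inhomogeneous equalities $\sum_{i\in[7]}y_i|\mathcal{B}^{(j)}_i|=|\mathcal{B}^{(j)}|/2$ for the families $\mathcal{B}^{(j)}=\mathcal{P}([7]\setminus\{j\})\uplus\mathcal{D}$, $j\in[7]$. By Observation~\ref{vaugmorr} the equality premise is the stronger of the two, so the integer point produced in Proposition~\ref{morrcount1} does not by itself certify Vaughan's hypothesis (one checks it satisfies the relevant relations as strict inequalities, not equalities), and a genuine solution of the equalities must be found.

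To find it I would rewrite each equality through the identity in the proof of Corollary~\ref{weight}: the coefficient of $y_i$ in the $j$-th equality is $2|\mathcal{B}^{(j)}_i|-|\mathcal{B}^{(j)}|$, so the equality is equivalent to $\sum_{i\in[7]}(2|\mathcal{B}^{(j)}_i|-|\mathcal{B}^{(j)}|)\,y_i=|\mathcal{B}^{(j)}|\,(1-\sum_{i\in[7]}y_i)$. Setting $t:=1-\sum_{i\in[7]}y_i$ converts the whole system into the homogeneous system $\sum_{i\in[7]}(2|\mathcal{B}^{(j)}_i|-|\mathcal{B}^{(j)}|)\,y_i=|\mathcal{B}^{(j)}|\,t$ in the eight unknowns $(y_1,\dots,y_7,t)$, whose $y$-coefficient rows are precisely those displayed in Proposition~\ref{morrcount1} once the cardinalities $|\mathcal{B}^{(j)}|$ are computed. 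Being seven homogeneous equations in eight unknowns it has a nontrivial kernel; setting $t=0$ yields a homogeneous system in $y$ alone that one checks has only the zero solution, so the kernel is one-dimensional with $t\ne 0$. I would then compute this kernel ray explicitly, check that its coordinates may be taken with $y_i\ge 0$ and $t>0$, and normalize so that $\sum_{i\in[7]}y_i+t=1$. Since the homogeneous system is scale-invariant and $t=1-\sum_{i\in[7]}y_i$ holds by construction, the normalized $y$ solves Vaughan's equalities with $\sum_{i\in[7]}y_i=1-t<1$; together with $P^{\mathcal{D}}=\emptyset$ this proves Corollary~\ref{counterVaughan}.

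The crux is the sign check. A $7\times 8$ homogeneous system always has a nonzero kernel vector, but nothing forces its unique free direction to be coordinatewise nonnegative; that it is, for this particular $\mathcal{D}$, is the entire content of the counterexample and reflects that the generator $\mathcal{S}$ was engineered for exactly this purpose. Establishing it amounts to displaying the explicit (and not especially tidy) rational solution and verifying nonnegativity directly, a finite computation that—as with the other certificates in this paper—can be rechecked independently by the exact solver.
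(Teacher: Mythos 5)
Your plan coincides with the paper's own (second, appendix) proof: the paper also takes $\mathcal{D}=\gen{\mathcal{S}}\subset\mathcal{P}([7])$, obtains $P^{\mathcal{D}}=\emptyset$ from Proposition~\ref{morrcount2} together with Corollary~\ref{rqz}, and then certifies Vaughan's hypothesis by writing out the seven equalities $\sum_{i}y_i|\mathcal{B}^{(j)}_i|=|\mathcal{B}^{(j)}|/2$ and exhibiting an explicit rational $\bar y\geq 0$ with $\sum_i\bar y_i<1$. Two points of comparison are worth making. First, you are right to be wary of deducing Vaughan's hypothesis from Proposition~\ref{morrcount1} alone: the paper's terser first route (``Observation~\ref{vaugmorr} and the proof of Corollary~\ref{morrcount3}'') invokes an implication whose stated justification runs from Vaughan's premise to Morris's premise, not conversely, so it is the appendix computation that actually carries the corollary; your argument sides with that and makes the distinction between the equality system and the inequality system explicit. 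Second, where the paper simply displays $\bar y$, you reduce its existence to a kernel computation for the homogenized $7\times 8$ system in $(y,t)$ with $t=1-\sum_i y_i$, which is an algebraically equivalent and correct reorganization; you also correctly isolate the one nontrivial issue, namely that nothing a priori forces the kernel ray to have $y\geq 0$ and $t>0$. The corollary is only established once that ray is computed and its signs verified, which the paper does by exhibiting the explicit fractions in the appendix; your proposal defers exactly that finite check, so it is a complete proof strategy but not yet a complete certificate.
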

\begin{proof}
Considering $\mathcal{D}$ as above with Observation~\ref{vaugmorr} and the proof of Corollary~\ref{morrcount3} yields the desired result. Alternatively in the appendix we show that, given $\mathcal{D}$, there exists a solution to the system of equations from Question~\ref{vaughanq} in $y \in  \mathbb{R}^n_{\geq 0}$ such that $\sum_{i\in [n]}y_i \leq 1$. This coupled with Proposition~\ref{morrcount2} and Corollary~\ref{rqz}, yields the result again.
\end{proof}
\section*{Conclusion}
In this work we design a cutting-plane algorithm that determines if a given UC
family necessarily implies Frankl’s conjecture for all families that contain it.
By employing exact rational integer programming and highly redundant verifi-
cation routines, we classify more previously unknown miminal non-isomorphic
FC-families than the total output of the past twenty-five years of research on the
topic. The effects of safely automating the discovery of FC-families allow us to
answer several open questions of Morris~\cite{Morris} and Vaughan~\cite{Vaughan2}. In particular, the
counterexamples we exhibit to settle open questions of interest require no trust
from the reader, in the sense that they are independent of the complex optimization
processeses that led to them, and can be checked by hand. Furthermore, our
framework can be used to improve several other results in the following ways:
\begin{itemize}
\item Since Algorithm~\ref{row generation} determines exactly whether a given UC family $\mathcal{A}$ if FC or Non\textendash FC for $6 \leq n \leq 10$, lower bounds for previously unknown $FC(k,n)$ in this range become trivial to obtain. Furthermore when coupled with a computer algebra system or graph isomorphism software to obtain the isomorphism types of generators, upper or exact bounds for previously unknown $FC(k,n)$ are obtained in the aforementioned range.
\vspace{0.3cm}
\item  The approach of Morris~\cite{Morris} for the classification of FC-families on five elements
lends itself well to being generalized within our framework. The number
of minimal non-isomorphic generators for FC-families seems to quickly
grow for $n \geq 6$, but we believe a complete classification for $n = 6$ is possible
with routine work.
\item Proving the 3-sets conjecture of Morris~\cite{Morris}, by recovering the arguments of
Vaughan~\cite{Vaughan3} through a classification of $FC(3, n)$ for $7 \leq n \leq 9 $ and using
Morris’s lower bound on 3-sets, is within reach.
\end{itemize}
\section*{Acknowledgements}
The author would like to thank  Martin Gr\"otschel, Ralf Bornd\"orfer, Felipe Serrano and Axel Werner for fruitful discussions, and Ambros Gleixner and Stephen Maher for their
helpful suggestions about SCIP and exact SCIP.
\bibliographystyle{plain}
  \bibliography{PulajCuttingPlanes}
\appendix
\section{Appendix}
To check the claims of \emph{infeasibility} for the linear systems in this paper it is sufficient to ensure that the vector of values exhibited in square brackets before each row corresponds to the vector $y$ in the theorem below.
\begin{thrm}[Farkas' Lemma]
Let $A_1 \in \mathbb{R}^{m_1 \times n}$, $A_2 \in \mathbb{R}^{m_2 \times n}$  and $A_3 \in \mathbb{R}^{m_3 \times n}$. Also let $b_1 \in \mathbb{R}^{m_1}$, $b_2 \in \mathbb{R}^{m_2}$ and $b_3 \in \mathbb{R}^{m_3}$. Then the following system of linear equalities and inequalities in $x \in \mathbb{R}^{n}$ :
\begin{align*}
&A_1x = b_1 \\
&A_2x \leq b_2 \\
&A_3x \geq b_3\\
&x \geq 0
\end{align*}
\\
is infeasible if and only if there exist $y_1 \in \mathbb{R}^{m_1},y_2 \in \mathbb{R}^{m_2},y_3 \in \mathbb{R}^{m_3}$ such that:
\begin{align*}
&b_1^\top y_1 + b_2^\top y_2 + b_3^\top y_3 > 0 \\
&A_1^\top y_1 + A_2^\top y_2 + A_3^\top y_3 \leq 0 \\
&y_2 \leq 0\\
&y_3 \geq 0
\end{align*}
\\

\end{thrm}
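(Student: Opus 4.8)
The plan is to prove both directions by reducing to a single canonical form of Farkas' Lemma, namely the statement that for a matrix $M$ and vector $d$ the system $\{x \ge 0 : Mx \le d\}$ is infeasible if and only if there exists $u \ge 0$ with $M^\top u \ge 0$ and $d^\top u < 0$. I would treat this canonical form as the analytic core, establishing it (or citing it) via Fourier--Motzkin elimination, exactly the tool already invoked in Proposition~\ref{FM} and in the exposition of Aigner and Ziegler~\cite{Aigner}. Everything else is bookkeeping that converts the mixed equality/inequality system into this form and then translates the resulting nonnegative multiplier $u$ back into the signed triple $(y_1, y_2, y_3)$.

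The \emph{if} direction (a certificate forces infeasibility) I would dispatch first by direct aggregation, with no appeal to the canonical lemma. Suppose $x^*$ satisfies all primal constraints and $(y_1, y_2, y_3)$ satisfies the dual conditions. From $A_1 x^* = b_1$ I get $b_1^\top y_1 = x^{*\top} A_1^\top y_1$; from $A_2 x^* \le b_2$ together with $y_2 \le 0$ I get $b_2^\top y_2 \le x^{*\top} A_2^\top y_2$; and from $A_3 x^* \ge b_3$ with $y_3 \ge 0$ I get $b_3^\top y_3 \le x^{*\top} A_3^\top y_3$. Summing these three relations yields $b_1^\top y_1 + b_2^\top y_2 + b_3^\top y_3 \le x^{*\top}(A_1^\top y_1 + A_2^\top y_2 + A_3^\top y_3)$, and since the parenthesised vector is $\le 0$ while $x^* \ge 0$, the right-hand side is $\le 0$, contradicting $b_1^\top y_1 + b_2^\top y_2 + b_3^\top y_3 > 0$.

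For the \emph{only if} direction I would set up the reduction. Write the equality block $A_1 x = b_1$ as the pair $A_1 x \le b_1$ and $-A_1 x \le -b_1$, rewrite $A_3 x \ge b_3$ as $-A_3 x \le -b_3$, and keep $x \ge 0$. Stacking these yields a system $\hat A x \le \hat b$, where $\hat A$ has the row blocks $A_1, -A_1, A_2, -A_3$ and $\hat b$ the corresponding entries $b_1, -b_1, b_2, -b_3$, an exact reformulation of the original system over $x \ge 0$. Applying the canonical lemma to $(\hat A, \hat b)$ produces a nonnegative multiplier $u = (u^+, u^-, u_2, u_3)$, blocked conformally, with $\hat A^\top u \ge 0$ and $\hat b^\top u < 0$. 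I would then define $y_1 := u^- - u^+$ (unrestricted in sign, as it is a difference of two nonnegative pieces), $y_2 := -u_2 \le 0$, and $y_3 := u_3 \ge 0$. Substituting $A_1^\top u^+ - A_1^\top u^- = -A_1^\top y_1$, $A_2^\top u_2 = -A_2^\top y_2$, and $-A_3^\top u_3 = -A_3^\top y_3$ turns $\hat A^\top u \ge 0$ into $A_1^\top y_1 + A_2^\top y_2 + A_3^\top y_3 \le 0$, and the identical substitution turns $\hat b^\top u < 0$ into $b_1^\top y_1 + b_2^\top y_2 + b_3^\top y_3 > 0$, which are precisely the claimed dual conditions.

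The main obstacle I anticipate is not conceptual but clerical: keeping the sign conventions consistent through the reduction, in particular recognizing that the split equality contributes a \emph{free} multiplier $y_1$ (the difference $u^- - u^+$), that reversing the $\ge$ constraints flips $u_2$ into $y_2 \le 0$, and that these same flips must be tracked once more when verifying that $\hat b^\top u < 0$ is equivalent to $b_1^\top y_1 + b_2^\top y_2 + b_3^\top y_3 > 0$. The one genuine piece of mathematics is the canonical Farkas lemma used in the reduction; since the paper already relies on Fourier--Motzkin elimination (Proposition~\ref{FM}), I would either cite that machinery directly or include a short inductive elimination argument, remarking that a separating-hyperplane proof is an equally acceptable substitute.
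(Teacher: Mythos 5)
Your proof is correct, but note what the paper actually does with this statement: it does not prove Farkas' Lemma at all. The lemma is stated in the Appendix purely as a reference tool (``For convenience we state the lemma in the appendix''), so that the reader can check the infeasibility certificates (the bracketed Farkas dual values) accompanying the linear systems in Propositions~\ref{nonfcounter} and~\ref{morrcount2} and in the proof of Proposition~\ref{morriscounter}. So you have supplied a proof where the paper merely cites a classical result. Your argument itself is sound: the aggregation argument for the ``if'' direction is exactly the standard weak-duality computation, with the sign conventions ($y_2 \leq 0$ against $A_2x \leq b_2$, $y_3 \geq 0$ against $A_3x \geq b_3$, $x^* \geq 0$ against the aggregated column condition) handled correctly; and in the ``only if'' direction, your reduction to the canonical form $\{x \geq 0 : \hat{A}x \leq \hat{b}\}$ with row blocks $A_1, -A_1, A_2, -A_3$, followed by the substitution $y_1 := u^- - u^+$ (free), $y_2 := -u_2 \leq 0$, $y_3 := u_3 \geq 0$, translates $\hat{A}^\top u \geq 0$ and $\hat{b}^\top u < 0$ into precisely the stated dual conditions \textemdash{} I verified the sign bookkeeping and it is consistent. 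The one piece you defer, the canonical Farkas lemma for $\{x \geq 0 : Mx \leq d\}$, is left to citation or Fourier--Motzkin elimination; this is an acceptable level of rigor here, and indeed matches the paper's own practice of invoking Fourier--Motzkin without proof in Proposition~\ref{FM}.
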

\begin{proof}[Proof of Proposition \ref{morriscounter}]
We identify sets in $\mathcal{P}([6])$ with the columns in the matrix below. For each column, the number on the top row represents its corresponding variable index in $IP(\gen{\mathcal{S}'},c)$. Column $\color{red}{c}$ corresponds to a weight vector for the elements in $[n]$. The columns representing families of sets $\mathcal{S}'$ and $\mathcal{T}$ are colored red and blue, respectively. As previously, $\gen{\mathcal{S}'} = \mathcal{S}' \cup \mathcal{T}$.
\vspace{7mm}
\\
\[
\begin{tikzpicture}[ transform canvas={scale=0.69} ]
\matrix (m)[matrix of math nodes]
{   \color{red}{c} &  \color{red}{0} &  \color{red}{1} &  \color{red}{2} &  \color{red}{3} &  \color{red}{4} &  \color{red}{5} &  \color{red}{6} &  \color{red}{7} &  \color{red}{8} &  \color{red}{9} &  \color{red}{10} &  \color{red}{11} &  \color{red}{12} &  \color{red}{13} &  \color{red}{14} &  \color{red}{15} &  \color{red}{16} &  \color{red}{17} &  \color{red}{18} &  \color{red}{19} &  \color{red}{20} &  \color{red}{21} &  \color{red}{22} &  \color{red}{23} &  \color{red}{24} &  \color{red}{25} &  \color{red}{26} &  \color{red}{27} &  \color{red}{28} &  \color{red}{29} &  \color{red}{30} &  \color{red}{31} \\ 
16& \color{blue}{1}& \color{red}{1}& 1& 1& 1& 1& 1& 1&  \color{blue}{1}& 1& 1& 1& \color{red}{1}& 1& 1& 1& \color{blue}{1}& 1& 1& \color{red}{1}& 1& 1& 1& 1& 1& 1& 1& 1& 1& 1& 1& 1\\ 
8& \color{blue}{1}& \color{red}{1}& 1& 1& 1& 1& 1& 1&  \color{blue}{1}& 1& 1& 1& \color{red}{1}& 1& 1& 1& \color{blue}{0}& 0& 0& \color{red}{0}& 0& 0& 0& 0& 0& 0& 0& 0& 0& 0& 0& 0\\ 
12& \color{blue}{1}& \color{red}{1}& 1& 1& 1& 1& 1& 1&  \color{blue}{0}& 0& 0& 0& \color{red}{0}& 0& 0& 0& \color{blue}{1}& 1& 1& \color{red}{1}& 1& 1& 1& 1& 0& 0& 0& 0& 0& 0& 0& 0\\ 
20& \color{blue}{1}& \color{red}{1}& 1& 1& 0& 0& 0& 0&  \color{blue}{1}& 1& 1& 1& \color{red}{0}& 0& 0& 0& \color{blue}{1}& 1& 1& \color{red}{1}& 0& 0& 0& 0& 1& 1& 1& 1& 0& 0& 0& 0\\ 
17& \color{blue}{1}& \color{green}{1}& 0& 0& 1& 1& 0& 0&  \color{blue}{1}& 1& 0& 0& \color{red}{1}& 1& 0& 0& \color{blue}{1}& 1& 0& \color{red}{0}& 1& 1& 0& 0& 1& 1& 0& 0& 1& 1& 0& 0\\ 
15& \color{blue}{1}& 0& 1& 0& 1& 0& 1& 0&  \color{blue}{1}& 0& 1& 0&  \color{red}{1}& 0& 1& 0& \color{blue}{1}& 0& 1& \color{red}{0}& 1& 0& 1& 0& 1& 0& 1& 0& 1& 0& 1& 0\\ 
};

\end{tikzpicture}
\]
\\
\vspace{7mm}
\\
\[
\begin{tikzpicture}[ transform canvas={scale=0.65} ]
\matrix (m)[matrix of math nodes]
{   \color{red}{32} &  \color{red}{33} &  \color{red}{34} &  \color{red}{35} &  \color{red}{36} &  \color{red}{37} &  \color{red}{38} &  \color{red}{39} &  \color{red}{40} &  \color{red}{41} &  \color{red}{42} &  \color{red}{43} &  \color{red}{44} &  \color{red}{45} &  \color{red}{46} &  \color{red}{47} &  \color{red}{48} &  \color{red}{49} &  \color{red}{50} &  \color{red}{51} &  \color{red}{52} &  \color{red}{53} &  \color{red}{54} &  \color{red}{55} &  \color{red}{56} &  \color{red}{57} &  \color{red}{58} &  \color{red}{59} &  \color{red}{60} &  \color{red}{61} &  \color{red}{62} &  \color{red}{63} \\ 
0& 0& 0& 0& 0& 0& 0& 0& 0& 0& 0& 0& 0& 0& 0& 0& 0& 0& 0& 0& 0& 0& 0& 0& \color{red}{0}& 0& 0& 0& 0& 0& 0& \color{red}{0}\\ 
1& 1& 1& 1& 1& 1& 1& 1& 1& 1& 1& 1& 1& 1& 1& 1& 0& 0& 0& 0& 0& 0& 0& 0& \color{red}{0}& 0& 0& 0& 0& 0& 0& \color{red}{0}\\ 
1& 1& 1& 1& 1& 1& 1& 1& 0& 0& 0& 0& 0& 0& 0& 0& 1& 1& 1& 1& 1& 1& 1& 1& \color{red}{0}& 0& 0& 0& 0& 0& 0& \color{red}{0}\\ 
1& 1& 1& 1& 0& 0& 0& 0& 1& 1& 1& 1& 0& 0& 0& 0& 1& 1& 1& 1& 0& 0& 0& 0& \color{red}{1}& 1& 1& 1& 0& 0& 0& \color{red}{0}\\ 
1& 1& 0& 0& 1& 1& 0& 0& 1& 1& 0& 0& 1& 1& 0& 0& 1& 1& 0& 0& 1& 1& 0& 0& \color{red}{1}& 1& 0& 0& 1& 1& 0& \color{red}{0}\\ 
1& 0& 1& 0& 1& 0& 1& 0& 1& 0& 1& 0& 1& 0& 1& 0& 1& 0& 1& 0& 1& 0& 1& 0& \color{red}{1}& 0& 1& 0& 1& 0& 1& \color{red}{0}\\ 
};

\end{tikzpicture}
\]
\\
\vspace{5mm}
\\
We prove that $IP(\gen{\mathcal{S}'},c)$ with some added valid FC and FC-chain inequalities is infeasible by branching on $x_0$ and showing that the linear relaxations of the two subproblems are infeasible. We denote an explicit FC-chain by  $B_i \xrightarrow{S} B_k \xrightarrow{U} \ldots B_p\xrightarrow{T} B_j$, where $S,U,T$ satisfy either condition listed in Definition~\ref{fcchain}. When needed we specify which type of inequalities form an FC-chain by $S^{UC}, U^{UC}, T^{UC}$ for UC inequalities, and $S^{FS}, U^{FS}, T^{FS}$ for FS inequalites. We show infeasibility by explicitly exhibiting Farkas dual values (shown in square brackets) for each row of some irreducible infeasible subset of constraints. It suffices to show the infeasibilty of the following system (trivial inequalities not shown):
\begin{enumerate}
\item $\tt{[44]:x_{0}=1}$.

\item UC inequalites: 

$\tt{[-2]:x_{11} + x_{45} - x_{9} \leq 1}$, $\tt{[-3]:x_{13} + x_{59} - x_9 \leq 1}$, 

$\tt{[-2]:x_{14} + x_{43} - x_{10} \leq 1}$, $\tt{[-1]:x_{22} + x_{61}  - x_{20}\leq 1}$, 

$[-3]:\tt{x_{23} + x_{60}  - x_{20}\leq 1}$, $\tt{[-1]:x_{35} + x_{45} - x_{33}\leq 1}$, 

$\tt{[-3]:x_{35} + x_{62} - x_{34}\leq 1}$, $\tt{[-6]:x_{37} + x_{59} - x_{33}\leq 1}$, 

$\tt{[-1]:x_{38} + x_{43} - x_{34} \leq 1}$, $\tt{[-3]:x_{38} + x_{45} - x_{36} \leq 1}$,

 $\tt{[-1]:x_{38} + x_{61} - x_{36} \leq 1}$, $\tt{[-1]:x_{39} + x_{44} - x_{36} \leq 1}$, 

$\tt{[-2]:x_{42} + x_{55} - x_{34} \leq 1}$, $\tt{[-2]:x_{53} + x_{43} - x_{33} \leq 1}$, 

$\tt{[-5]:x_{54} + x_{43} - x_{34}\leq 1}$, $\tt{[-3]:x_{44} + x_{55} - x_{36} \leq 1}$, 

$\tt{[-4]:x_{47} + x_{49} - x_{33}\leq 1}$.
\item FS inequalities: 

$\tt{[0]:x_{47} - x_{1} \leq 0}$, $\tt{[-6]:x_{63} - x_{1} \leq 0}$, $\tt{[-14]:x_{63} - x_{8} \leq 0}$, 

$\tt{[-1]:x_{7} - x_{4} \leq 0}$,  $\tt{[-16]:x_{55} - x_{4} \leq 0}$,  $\tt{[-12]:x_{63} - x_{12} \leq 0}$, 

$\tt{[-3]:x_{14} - x_{2} \leq 0}$, $\tt{[-24]:x_{46} - x_{2} \leq 0}$, $\tt{[-12]:x_{47} - x_{3} \leq 0}$,  

$\tt{[-21]:x_{61} - x_{17} \leq 0}$, $\tt{[-19]:x_{62} - x_{18} \leq 0}$, $\tt{[-4]:x_{63} - x_{19} \leq 0}$, 

$\tt{[-24]:x_{31} - x_{24} \leq 0}$, $\tt{[-1]:x_{37} - x_{32} \leq 0}$, $\tt{[-4]:x_{38} - x_{32} \leq 0}$, 

$\tt{[-23]:x_{39} - x_{32} \leq 0}$, $\tt{[-16]:x_{47} - x_{40} \leq 0}$,  $\tt{[-11]:x_{55} - x_{48} \leq 0}$, 

 $\tt{[-8]:x_{63} - x_{56}\leq 0}$.
\item FC inequalities:  

$\tt{[-2]:x_{15} + x_{53} - x_{1} - x_{5} \leq 0}$, $\tt{[-7]:x_{15} + x_{57} - x_{1} - x_{9} \leq 0}$, 

$\tt{[-9]:x_{58} + x_{15} - x_{8} - x_{10} \leq 0}$, $\tt{[-2]:x_{15} + x_{59} - x_{1} - x_{11} \leq 0}$, 

$\tt{[-7]:x_{45} + x_{23} - x_{1} - x_{5} \leq 0}$, $\tt{[-5]:x_{60} + x_{23} - x_{20} - x_{16} \leq 0}$, 

$\tt{[-1]:x_{61} + x_{23} - x_{21} - x_{16} \leq 0}$, $\tt{[-5]:x_{45} + x_{27} - x_{1} - x_{9} \leq 0}$, 

$\tt{[-3]:x_{27} + x_{61} - x_{25} - x_{16} \leq 0}$, $\tt{[0]:x_{43} + x_{29} - x_{1} - x_{9} \leq 0}$, 

$\tt{[-5]:x_{54} + x_{29} - x_{20} - x_{16} \leq 0}$, $\tt{[-6]:x_{29} + x_{59} - x_{16} - x_{25} \leq 0}$, 

$\tt{[-4]:x_{43} + x_{30} - x_{10} - x_{8} \leq 0}$, $\tt{[-2]:x_{53} + x_{30} - x_{16} - x_{20} \leq 0}$, 

$\tt{[-7]:x_{59} + x_{30} - x_{16} - x_{26} \leq 0}$, $\tt{[-4]:x_{31} + x_{60} - x_{16} - x_{28}\leq 0}$, 

$\tt{[-1]:x_{43} + x_{45} - x_{8}- x_{41} \leq 0}$, $\tt{[-1]:x_{43} + x_{62} - x_{8}- x_{42}  \leq 0}$, 

$\tt{[-3]:x_{47} + x_{62} - x_{8}- x_{46} \leq 0}$, $\tt{[-3]:x_{51} + x_{62} - x_{16} - x_{50}  \leq 0}$, 

$\tt{[-7]:x_{7}+ x_{54}- x_{4}- x_{6} \leq 0}$, $\tt{[-9]:x_{51} + x_{53} - x_{48} - x_{49} \leq 0}$.
\item WV inequality: 
 
$\tt{[-0.5]:88x_{0}+58x_{1}+54 x_{2}+24 x_{3}+48 x_{4}+18x_{5}+14 x_{6}-16 x_{7}+64 x_{8}}$

$\tt{+34 x_{9}+30 x_{10}+24x_{12}- 6 x_{13} - 10 x_{14} - 40 x_{15} + 72 x_{16} + 42 x_{17}}$ 

$\tt{+ 38x_{18}+ 8x_{19} + 32x_{20} + 2x_{21} - 2x_{22}- 32x_{23} + 48x_{24}+ 18x_{25}}$

$\tt{+ 14x_{26} - 16x_{27} + 8x_{28} - 22x_{29} - 26x_{30}- 56x_{31} + 56x_{32}+ 26x_{33}}$ 
                  
$\tt{+ 22x_{34} - 8x_{35} + 16x_{36}- 14x_{37} - 18x_{38} - 48x_{39} + 32x_{40} + 2x_{41}}$ 

$\tt{- 2x_{42}- 32x_{43} - 8x_{44} - 38x_{45} - 42x_{46} - 72x_{47} + 40x_{48}}$

$\tt{+ 10x_{49} + 6x_{50} - 24x_{51} - 30x_{53}- 34x_{54}- 64x_{55} + 16x_{56} - 14x_{57}}$

$\tt{- 18 x_{58} - 48 x_{59} - 24 x_{60} - 54 x_{61}- 58 x_{62} - 88 x_{63} \leq -1}$.
\end{enumerate}
Furthermore we show that the following system of constraints is infeasible (trivial ones not shown):
\begin{enumerate}
\item $\tt{[-186.5]:x_0 = 0}$
\item FS inequalities: 

$\tt{[-7.5]: x_1 - x_0 \leq 0}$, $\tt{[-10]: x_6 - x_0 \leq 0}$, $\tt{[-8.5]: x_{11} - x_0\leq 0}$,

$\tt{[0]: x_{19} - x_0 \leq 0}$,  $\tt{[-8.5]:x_{23} - x_0 \leq 0}$,  $\tt{[-4]:x_{35} - x_0 \leq 0}$, 

$\tt{[-7]:x_{37} - x_0 \leq 0}$, $\tt{[-9]:x_{38} - x_0 \leq 0}$, $\tt{[-24]:x_{39} - x_0 \leq 0}$,  

$\tt{[-2.5]:x_{41} - x_0 \leq 0}$, $\tt{[-16]:x_{44} - x_0 \leq 0}$, $\tt{[-21]:x_{46} - x_0\leq 0}$, 

$\tt{[-15]:x_{47} - x_0 \leq 0}$, $\tt{[-6]:x_{50} - x_0\leq 0}$, $\tt{[-19]:x_{55} - x_0 \leq 0}$, 

$\tt{[-5.5]:x_{56} - x_0 \leq 0}$, $\tt{[-17]:x_{59} - x_0 \leq 0}$,  $\tt{[-16]:x_{61} - x_0 \leq 0}$,  

$\tt{[-11]:x_{62} - x_0\leq 0}$,  $\tt{[-23]:x_{63} - x_0 \leq 0}$, $\tt{[-12]:x_{13} - x_{12}  \leq 0}$,  

$\tt{[-12.5]:x_{14} - x_{2} \leq 0}$,  $\tt{[-12.5]: x_{22} - x_{18} \leq 0}$,  $\tt{[-6.5]:x_{62} - x_{18} \leq 0}$,

$\tt{[-1]:x_{42} - x_{40} \leq 0}$,  $\tt{[-7]:x_{51} - x_{48} \leq 0}$, $\tt{[-7.5]:x_{43} - x_{8}\leq 0}$, 

$\tt{[-5.5]: x_{29}  - x_{17}\leq 0}$, $\tt{[-5.5]:x_{61} - x_{9} \leq 0}$, $\tt{[0]:x_{63} - x_{56} \leq 0}$.
\item FC inequalities:  

$\tt{[-7.5]:x_{15} + x_{45} - x_{1}- x_{13}\leq 0}$, $\tt{[-9]:x_{15} + x_{53} - x_{1}- x_{5} \leq 0}$, 

$\tt{[-3.5]:x_{15} + x_{57}  - x_{1}- x_{9}\leq 0}$, $\tt{[-7.5]:x_{23} + x_{62} - x_{16} - x_{22} \leq 0}$, 

$\tt{[-8]:x_{27} + x_{45} - x_{1}- x_{9} \leq 0}$, $\tt{[-8.5]:x_{31} + x_{43} - x_{1}- x_{11}\leq 0}$, 

$\tt{[-1]: x_{31} + x_{53} - x_{16} - x_{21} \leq 0}$, $\tt{[-3.5]:x_{45} + x_{57} - x_{8}- x_{41} \leq 0}$, 

$\tt{[-9]:x_{55} + x_{58}  - x_{16} - x_{50} \leq 0}$, $\tt{[-17]:x_{7}+ x_{54} - x_{4}- x_{6} \leq 0}$, 

$\tt{[-5]:x_{51} + x_{53} - x_{48} - x_{49}\leq 0}$.
\item FC-chain inequalities (it is straightforward to check that the explicit chains, where we identify sets with their respective column numbers, work as required by Proposition~\ref{FCineq}):
	
$\tt{[-1.5]:x_{29} + x_{47} + x_{61} + x_{63} - x_{8}- x_{13} - x_{17}  - x_{56} \leq 0}$, 

$(29 \xrightarrow{19^{FS}} 17)$, $(63 \xrightarrow{8^{FS}} 8)$, $(47 \xrightarrow{8^{FS}} 8)$, $(61 \xrightarrow{56^{FS}} 56)$, $(63 \xrightarrow{56^{FS}} 56)$, 

$(47 \xrightarrow{29^{UC}} 13)$, $(61 \xrightarrow{19^{FS}} 17)$.

$\tt{[-4]:x_{29} + x_{61} + x_{62} - x_{16} - x_{17} - x_{28} \leq 0}$, 

$(29 \xrightarrow{16^{FS}} 16)$, $(61 \xrightarrow{19^{FS}} 17)$, $(62 \xrightarrow{19^{FS}} 18 \xrightarrow{16^{FS}} 16)$, 

$(29 \xrightarrow{62^{UC}} 28)$.

$\tt{[-7.5]:x_{30} + x_{31} + x_{47} + x_{63} - x_{8}- x_{14} - x_{16} - x_{24}  \leq 0}$, 

$(63 \xrightarrow{8^{FS}} 8)$, $(47 \xrightarrow{8^{FS}} 8)$, $(63 \xrightarrow{16^{FS}} 16)$, $(47 \xrightarrow{30^{UC}} 14)$, $(30 \xrightarrow{16^{FS}} 16)$, 

$(30 \xrightarrow{56^{FS}} 24)$, $(31 \xrightarrow{16^{FS}} 16)$, $(31 \xrightarrow{56^{FS}} 24)$.

$\tt{[-4]:x_{30} + x_{31} + x_{55} - x_{19} - x_{22} - x_{24}\leq 0}$, 

$(30 \xrightarrow{56^{FS}} 24)$, $(31 \xrightarrow{56^{FS}} 24)$, $(31 \xrightarrow{19^{FS}} 19)$, $(55 \xrightarrow{30^{UC}} 22)$, $(55 \xrightarrow{19^{FS}} 19)$.

$\tt{[-7]:x_{30} + x_{31} + x_{59} - x_{16} - x_{24} - x_{26}  \leq 0}$, 

$(30 \xrightarrow{56^{FS}} 24)$, $(31 \xrightarrow{56^{FS}} 24)$, $(31 \xrightarrow{16^{FS}} 16)$, $(30 \xrightarrow{16^{FS}} 16)$, 

$(59 \xrightarrow{16^{FS}} 16)$, $(59 \xrightarrow{30^{UC}} 26)$.

$\tt{[0]:x_{47} + x_{54} + x_{63} - x_{8}- x_{16} - x_{38} \leq 0}$, 

$(63 \xrightarrow{8^{FS}} 8)$, $(63 \xrightarrow{16^{FS}} 16)$, $(47 \xrightarrow{8^{FS}} 8)$, $(54 \xrightarrow{16^{FS}} 16)$, $(54 \xrightarrow{47^{UC}} 38)$.

$\tt{[-12]: x_{47} + x_{60} + x_{63} -x_{8} -x_{44} -x_{56}\leq 0}$.

$(47 \xrightarrow{8^{FS}} 8)$, $(63 \xrightarrow{8^{FS}} 8)$, $(63 \xrightarrow{56^{FS}} 56)$, $(60 \xrightarrow{56^{FS}} 56)$, $(60 \xrightarrow{47^{UC}} 44)$.

\item WV inequality: 
 
$\tt{[-0.5]:58x_{1}+54 x_{2}+24 x_{3}+48 x_{4}+18x_{5}+14 x_{6}-16 x_{7}+64 x_{8}}$

$\tt{+34 x_{9}+30 x_{10}+24x_{12}- 6 x_{13} - 10 x_{14} - 40 x_{15} + 72 x_{16} + 42 x_{17}}$ 

$\tt{+ 38x_{18}+ 8x_{19} + 32x_{20} + 2x_{21} - 2x_{22}- 32x_{23} + 48x_{24}+ 18x_{25}}$

$\tt{+ 14x_{26} - 16x_{27} + 8x_{28} - 22x_{29} - 26x_{30}- 56x_{31} + 56x_{32}+ 26x_{33}}$ 
                  
$\tt{+ 22x_{34} - 8x_{35} + 16x_{36}- 14x_{37} - 18x_{38} - 48x_{39} + 32x_{40} + 2x_{41}}$ 

$\tt{- 2x_{42}- 32x_{43} - 8x_{44} - 38x_{45} - 42x_{46} - 72x_{47} + 40x_{48}}$

$\tt{+ 10x_{49} + 6x_{50} - 24x_{51} - 30x_{53}- 34x_{54}- 64x_{55} + 16x_{56} - 14x_{57}}$

$\tt{- 18 x_{58} - 48 x_{59} - 24 x_{60} - 54 x_{61}- 58 x_{62} - 88 x_{63} \leq -1}$.
\end{enumerate}
\end{proof}
\begin{proof}[Another proof of Corollary~\ref{counterVaughan}]
Next, we explicitly answer Vaughan's question in the negative. Given $\gen{\mathcal{S}}$, we show that there exists a nonnegative solution to the system of equations in Question~\ref{vaughanq} such that $\sum_{i\in [n]}y_i \leq 1$, where $\mathcal{S}$ is defined as in the counterexample to Morris's question. Furthermore the order of display of equations is the same as previously.
\begin{center}
\begin{tabular}{@{}l@{}}
$\tt{24y_{1} +80y_{2} +88y_{3} +88y_{4} +76y_{5} +80y_{6} +76y_{7}= 76}$ \\
$\tt{80y_{1} +20y_{2} +84y_{3} +84y_{4} +74y_{5} +76y_{6} +76y_{7}= 74}$ \\
$\tt{92y_{1} +88y_{2} +44y_{3} +108y_{4} +86y_{5} +88y_{6} +96y_{7}= 86}$ \\
$\tt{92y_{1} +88y_{2} +108y_{3} +44y_{4} +86y_{5} +88y_{6} +96y_{7}= 86}$ \\
$\tt{80y_{1} +80y_{2} +80y_{3} +80y_{4} +32y_{5} +96y_{6} +96y_{7}= 80}$ \\
$\tt{92y_{1} +88y_{2} +94y_{3} +94y_{4} +100y_{5} +46y_{6} + 102y_{7}= 87}$ \\
$\tt{92y_{1} +92y_{2} +104y_{3} +104y_{4} + 104y_{5} +104y_{6} +56y_{7}= 92}$ \\

\end{tabular}
\end{center}
Let $\bar{y}_1 = \frac{28304}{309701}$,$\bar{y}_2 = \frac{60251}{738922}$, $\bar{y}_3 = \frac{94175}{606582}$, $\bar{y}_4 = \frac{94175}{606582}$, $\bar{y}_5 = \frac{63417}{493048}$, $\bar{y}_6 = \frac{158373}{872233}$, $\bar{y}_7 = \frac{95228}{462227}$. Then $\bar{y} \in \mathbb{Q}^7_{\geq 0}$ such that $\bar{y}=(\bar{y}_1,\bar{y}_2, \ldots, \bar{y}_7)$ is a solution to the system of linear equations above such that the following holds,
\begin{equation}
\sum_{i \in [7]}\bar{y}_i= \frac{6896010572642828356716603827169373}{6898390222382701705240892810504568} < 1 \nonumber.
\end{equation}
\end{proof}

\begin{table}[h!]
\begin{center}
\scalebox{0.85}{
\begin{tabular}{|l|l|}
\hline
\multicolumn{2}{|c|}{Previously unknown minimal nonisomorphic generators for FC-families on $[6]$}\\\hline
\hline
1256, 3456, 456, 236 & $1 \mapsto 7$, $2 \mapsto 15$, $3 \mapsto 15$, $4 \mapsto 11$, $5 \mapsto 14$, $6 \mapsto 20$  \\\hline
12456, 2346, 456, 356 & $1 \mapsto 1$, $2 \mapsto 3$, $3 \mapsto 5$, $4 \mapsto 5$, $5 \mapsto 6$, $6 \mapsto 7$  \\\hline
12345, 1356, 456, 356 & $1 \mapsto 1$, $2 \mapsto 2$, $3 \mapsto 4$, $4 \mapsto 4$, $5 \mapsto 5$, $6 \mapsto 5$  \\\hline
12345, 2346, 456, 236 & $1 \mapsto 2$, $2 \mapsto 3$, $3 \mapsto 4$, $4 \mapsto 3$, $5 \mapsto 4$, $6 \mapsto 5$  \\\hline
12345, 2346, 456, 236 & $1 \mapsto 2$, $2 \mapsto 3$, $3 \mapsto 4$, $4 \mapsto 3$, $5 \mapsto 4$, $6 \mapsto 5$  \\\hline
12346, 1256, 456, 356 & $1 \mapsto 4$, $2 \mapsto 4$, $3 \mapsto 7$, $4 \mapsto 7$, $5 \mapsto 9$, $6 \mapsto 10$  \\\hline
12356, 1345, 456, 236 & $1 \mapsto 8$, $2 \mapsto 12$, $3 \mapsto 16$, $4 \mapsto 15$, $5 \mapsto 17$, $6 \mapsto 20$  \\\hline
12356, 1234, 456, 356 & $1 \mapsto 8$, $2 \mapsto 8$, $3 \mapsto 24$, $4 \mapsto 24$, $5 \mapsto 27$, $6 \mapsto 29$  \\\hline
12456, 1356, 456, 326 & $1 \mapsto 45$, $2 \mapsto 71$, $3 \mapsto 77$, $4 \mapsto 59$, $5 \mapsto 74$, $6 \mapsto 103$  \\\hline
136, 2456, 3456, 456, 123 & $1 \mapsto 6$, $2 \mapsto 5$, $3 \mapsto 7$, $4 \mapsto 3$, $5 \mapsto 3$, $6 \mapsto 6$  \\\hline
136, 1256, 3456, 456, 123 & $1 \mapsto 2$, $2 \mapsto 1$, $3 \mapsto 2$, $4 \mapsto 1$, $5 \mapsto 1$, $6 \mapsto 2$  \\\hline
2346, 3456, 2456, 2356, 1234 & $1 \mapsto 2$, $2 \mapsto 5$, $3 \mapsto 5$, $4 \mapsto 5$, $5 \mapsto 4$, $6 \mapsto 5$  \\\hline
3456, 2456, 2356, 1346, 1246, 1234 & $1 \mapsto 3$, $2 \mapsto 4$, $3 \mapsto 4$, $4 \mapsto 4$, $5 \mapsto 3$, $6 \mapsto 4$  \\\hline
3456, 2456, 2356, 1346, 1245, 1234 & $1 \mapsto 1$, $2 \mapsto 2$, $3 \mapsto 2$, $4 \mapsto 2$, $5 \mapsto 2$, $6 \mapsto 2$  \\\hline
3456, 2456, 1456, 1236, 1235, 1234 & $1 \mapsto 1$, $2 \mapsto 1$, $3 \mapsto 1$, $4 \mapsto 1$, $5 \mapsto 1$, $6 \mapsto 1$  \\\hline
3456, 2456, 1356, 1246, 1235, 1234 & $1 \mapsto 1$, $2 \mapsto 1$, $3 \mapsto 1$, $4 \mapsto 1$, $5 \mapsto 1$, $6 \mapsto 1$  \\\hline
3456, 2456, 2356, 2346, 1456, 1356 & $1 \mapsto 8$, $2 \mapsto 14$, $3 \mapsto 15$, $4 \mapsto 15$, $5 \mapsto 16$, $6 \mapsto 19$  \\\hline
3456, 2456, 2356, 2346, 1456, 1236 & $1 \mapsto 3$, $2 \mapsto 4$, $3 \mapsto 4$, $4 \mapsto 4$, $5 \mapsto 4$, $6 \mapsto 5$  \\\hline
3456, 2456, 2356, 1456, 1356, 1234 & $1 \mapsto 1$, $2 \mapsto 1$, $3 \mapsto 1$, $4 \mapsto 1$, $5 \mapsto 1$, $6 \mapsto 1$  \\\hline
3456, 2456, 2356, 1456, 1346, 1245 & $1 \mapsto 2$, $2 \mapsto 2$, $3 \mapsto 2$, $4 \mapsto 3$, $5 \mapsto 3$, $6 \mapsto 3$  \\\hline
3456, 2456, 2356, 1456, 1346, 1235 & $1 \mapsto 5$, $2 \mapsto 4$, $3 \mapsto 5$, $4 \mapsto 5$, $5 \mapsto 6$, $6 \mapsto 6$  \\\hline
3456, 2456, 2356, 1456, 1236, 1235 & $1 \mapsto 2$, $2 \mapsto 3$, $3 \mapsto 3$, $4 \mapsto 2$, $5 \mapsto 3$, $6 \mapsto 3$  \\\hline
3456, 2456, 2356, 1456, 1236, 1234 & $1 \mapsto 4$, $2 \mapsto 5$, $3 \mapsto 5$, $4 \mapsto 5$, $5 \mapsto 4$, $6 \mapsto 5$  \\\hline
3456, 2456, 2356, 1346, 1345, 1246 & $1 \mapsto 2$, $2 \mapsto 2$, $3 \mapsto 3$, $4 \mapsto 3$, $5 \mapsto 3$, $6 \mapsto 3$  \\\hline
3456, 2456, 2356, 1346, 1246, 1235 & $1 \mapsto 3$, $2 \mapsto 4$, $3 \mapsto 4$, $4 \mapsto 3$, $5 \mapsto 4$, $6 \mapsto 4$  \\\hline
12346, 3456, 2456, 2356, 1456, 1356, 1256 & $1 \mapsto 5$, $2 \mapsto 5$, $3 \mapsto 5$, $4 \mapsto 5$, $5 \mapsto 6$, $6 \mapsto 7$  \\\hline
1236, 3456, 2456, 2356, 1456, 1356, 1246 & $2 \mapsto 2$, $2 \mapsto 3$, $3 \mapsto 3$, $4 \mapsto 3$, $5 \mapsto 3$, $6 \mapsto 4$  \\\hline
1456, 3456, 2456, 2356, 1346, 1246, 1236 & $2 \mapsto 3$, $2 \mapsto 2$, $3 \mapsto 3$, $4 \mapsto 3$, $5 \mapsto 3$, $6 \mapsto 4$  \\\hline
1256, 3456, 2456, 2356, 1456, 1346, 1236 & $2 \mapsto 2$, $2 \mapsto 3$, $3 \mapsto 3$, $4 \mapsto 3$, $5 \mapsto 3$, $6 \mapsto 4$  \\\hline
2356, 2456, 345, 13456, 12346 & $2 \mapsto 3$, $2 \mapsto 8$, $3 \mapsto 12$, $4 \mapsto 12$, $5 \mapsto 13$, $6 \mapsto 9$  \\\hline
1234, 1256, 246, 23456, 13456 & $2 \mapsto 9$, $2 \mapsto 12$, $3 \mapsto 7$, $4 \mapsto 11$, $5 \mapsto 7$, $6 \mapsto 11$  \\\hline
1236, 2456, 125, 23456, 13456 & $2 \mapsto 32$, $2 \mapsto 34$, $3 \mapsto 19$, $4 \mapsto 16$, $5 \mapsto 32$, $6 \mapsto 25$  \\\hline
1246, 1256, 123, 23456, 13456 & $2 \mapsto 7$, $2 \mapsto 7$, $3 \mapsto 6$, $4 \mapsto 3$, $5 \mapsto 4$, $6 \mapsto 5$  \\\hline

\end{tabular}
}
\caption{Frankl's conjecture holds for all UC families which contain the following subfamilies}
\end{center}
\end{table}

\begin{table}[h!]
\begin{center}
\scalebox{0.85}{
\begin{tabular}{|l|l|}
\hline
\multicolumn{2}{|c|}{Previously unknown minimal nonisomorphic generators for FC-families on $[7]$}\\\hline
\hline
3457, 567, 467, 123 & $1 \mapsto 1$, $2 \mapsto 1$, $3 \mapsto 3$, $4 \mapsto 5$, $5 \mapsto 5$, $6 \mapsto 6$, $7 \mapsto 7$  \\\hline
2467, 567, 347, 126 & $1 \mapsto 1$, $2 \mapsto 2$, $3 \mapsto 1$, $4 \mapsto 2$, $5 \mapsto 2$, $6 \mapsto 3$, $7 \mapsto 3$  \\\hline
357, 367, 4567, 1237 & $1 \mapsto 1$, $2 \mapsto 1$, $3 \mapsto 6$, $4 \mapsto 2$, $5 \mapsto 5$, $6 \mapsto 5$, $7 \mapsto 7$  \\\hline
356, 367, 4567, 1237 & $1 \mapsto 1$, $2 \mapsto 1$, $3 \mapsto 4$, $4 \mapsto 1$, $5 \mapsto 3$, $6 \mapsto 4$, $7 \mapsto 3$  \\\hline
257, 367, 4567, 1237 & $1 \mapsto 1$, $2 \mapsto 2$, $3 \mapsto 2$, $4 \mapsto 1$, $5 \mapsto 2$, $6 \mapsto 2$, $7 \mapsto 3$  \\\hline
256, 367, 4567, 1237 & $1 \mapsto 1$, $2 \mapsto 3$, $3 \mapsto 2$, $4 \mapsto 1$, $5 \mapsto 3$, $6 \mapsto 4$, $7 \mapsto 3$  \\\hline
346, 367, 4567, 1237 & $1 \mapsto 1$, $2 \mapsto 1$, $3 \mapsto 4$, $4 \mapsto 3$, $5 \mapsto 3$, $6 \mapsto 4$, $7 \mapsto 2$  \\\hline
245, 367, 4567, 1237 & $1 \mapsto 2$, $2 \mapsto 6$, $3 \mapsto 6$, $4 \mapsto 2$, $5 \mapsto 7$, $6 \mapsto 7$, $7 \mapsto 4$  \\\hline
246, 367, 4567, 1237 & $1 \mapsto 1$, $2 \mapsto 3$, $3 \mapsto 3$, $4 \mapsto 3$, $5 \mapsto 3$, $6 \mapsto 4$, $7 \mapsto 1$  \\\hline
235, 367, 4567, 1237 & $1 \mapsto 1$, $2 \mapsto 3$, $3 \mapsto 4$, $4 \mapsto 1$, $5 \mapsto 3$, $6 \mapsto 4$, $7 \mapsto 1$  \\\hline
234, 367, 4567, 1237 & $1 \mapsto 2$, $2 \mapsto 4$, $3 \mapsto 7$, $4 \mapsto 5$, $5 \mapsto 5$, $6 \mapsto 5$, $7 \mapsto 4$  \\\hline
12456, 34567, 267, 127 & \scriptsize{$1 \mapsto 78$, $2 \mapsto 105$, $3 \mapsto 16$, $4 \mapsto 27$, $5 \mapsto 27$, $6 \mapsto 84$, $7 \mapsto 103$} \\\hline
12456, 34567, 267, 257 & $1 \mapsto 1$, $2 \mapsto 9$, $3 \mapsto 1$, $4 \mapsto 2$, $5 \mapsto 7$, $6 \mapsto 7$, $7 \mapsto 9$  \\\hline
3467, 4567, 2367, 2345, 1357 & \scriptsize{$1 \mapsto 20$, $2 \mapsto 36$, $3 \mapsto 52$, $4 \mapsto 45$, $5 \mapsto 46$, $6 \mapsto 39$, $7 \mapsto 49$}  \\\hline
3456, 4567, 2367, 1357, 1247 & \scriptsize{$1 \mapsto 15$, $2 \mapsto 15$, $3 \mapsto 20$, $4 \mapsto 18$, $5 \mapsto 19$, $6 \mapsto 19$, $7 \mapsto 23$}  \\\hline
3456, 4567, 2367, 1357, 1246 & \scriptsize{$1 \mapsto 17$, $2 \mapsto 16$, $3 \mapsto 22$, $4 \mapsto 19$, $5 \mapsto 21$, $6 \mapsto 24$, $7 \mapsto 22$}  \\\hline
2347, 4567, 3567, 1267, 1245 & \scriptsize{$1 \mapsto 69$, $2 \mapsto 91$, $3 \mapsto 71$, $4 \mapsto 93$, $5 \mapsto 87$, $6 \mapsto 81$, $7 \mapsto 103$}  \\\hline
2346, 4567, 3567, 2347, 1267 & \scriptsize{$1 \mapsto 6$, $2 \mapsto 13$, $3 \mapsto 14$, $4 \mapsto 14$, $5 \mapsto 9$, $6 \mapsto 16$, $7 \mapsto 16$} \\\hline
2345, 4567, 3567, 1247, 1236 & \scriptsize{$1 \mapsto 24$, $2 \mapsto 32$, $3 \mapsto 33$, $4 \mapsto 33$, $5 \mapsto 32$, $6 \mapsto 31$, $7 \mapsto 31$} \\\hline
2345, 4567, 2367, 1357, 1247 & $1 \mapsto 3$, $2 \mapsto 4$, $3 \mapsto 4$, $4 \mapsto 4$, $5 \mapsto 4$, $6 \mapsto 3$, $7 \mapsto 4$ \\\hline
2345, 4567, 2367, 1357, 1246 & $1 \mapsto 1$, $2 \mapsto 2$, $3 \mapsto 2$, $4 \mapsto 2$, $5 \mapsto 2$, $6 \mapsto 2$, $7 \mapsto 2$ \\\hline
1356, 4567, 2367, 2345, 1357 & $1 \mapsto 5$, $2 \mapsto 6$, $3 \mapsto 9$, $4 \mapsto 6$, $5 \mapsto 9$, $6 \mapsto 8$, $7 \mapsto 8$ \\\hline
\pbox{21cm}{12456, 13457, 23457, 12367, 12467, \\ 13467, 23467, 12567, 13567, 23567, \\ 14567, 24567, 34567} & \scriptsize{ $ 1  \mapsto 11$, $ 2  \mapsto 12$, $ 3  \mapsto 11$, $ 4  \mapsto 13$, $ 5  \mapsto 13$, $ 6  \mapsto 14$, $ 7  \mapsto 15$} \\\hline 
\pbox{21cm}{12345, 13457, 23457, 12367, 12467, \\ 13467, 23467, 12567, 13567, 23567, \\ 14567, 24567, 34567} & \scriptsize{ $ 1  \mapsto 12$, $ 2  \mapsto 12$, $ 3  \mapsto 13$, $ 4  \mapsto 13$, $ 5  \mapsto 13$, $ 6  \mapsto 12$, $ 7  \mapsto 15$} \\\hline 
\pbox{21cm}{12345, 23456, 23457, 12367, 12467, \\ 13467, 23467, 12567, 13567, 23567, \\ 14567, 24567, 34567} & $ 1  \mapsto 6$, $ 2  \mapsto 7$, $ 3  \mapsto 7$, $ 4  \mapsto 7$, $ 5  \mapsto 7$, $ 6  \mapsto 8$, $ 7  \mapsto 8$ \\\hline 
\pbox{21cm}{12345, 13456, 23457, 12367, 12467, \\ 13467, 23467, 12567, 13567, 23567, \\ 14567, 24567, 34567} & $ 1  \mapsto 6$, $ 2  \mapsto 6$, $ 3  \mapsto 6$, $ 4  \mapsto 6$, $ 5  \mapsto 6$, $ 6  \mapsto 7$, $ 7  \mapsto 7$ \\\hline 
\pbox{21cm}{23456, 12457, 13457, 23457, 12467, \\ 13467, 23467, 12567, 13567, 23567, \\ 14567, 24567, 34567} & $ 1  \mapsto 6$, $ 2  \mapsto 7$, $ 3  \mapsto 7$, $ 4  \mapsto 8$, $ 5  \mapsto 8$, $ 6  \mapsto 8$, $ 7  \mapsto 8$ \\\hline 
\pbox{21cm}{12356, 12457, 13457, 23457, 12467, \\ 13467, 23467, 12567, 13567, 23567, \\ 14567, 24567, 34567} & $ 1  \mapsto 8$, $ 2  \mapsto 8$, $ 3  \mapsto 8$, $ 4  \mapsto 8$, $ 5  \mapsto 9$, $ 6  \mapsto 9$, $ 7  \mapsto 10$ \\\hline
\pbox{21cm}{23456, 14567, 13567, 13467, 13457, \\ 13456, 12567, 12467, 12457, 12456, \\ 12367, 12357, 12356, 12347} & \scriptsize{$1 \mapsto 14$, $2 \mapsto 12$, $3 \mapsto 12$, $4 \mapsto 11$, $5 \mapsto 12$, $6 \mapsto 12$, $7 \mapsto 11$} \\\hline
\pbox{21cm}{23456, 14567, 13567, 13467, 13457, \\ 13456, 12567, 12467, 12457, 12456, \\ 12367, 12357, 12346, 12345} & $1 \mapsto 7$, $2 \mapsto 6$, $3 \mapsto 6$, $4 \mapsto 6$, $5 \mapsto 6$, $6 \mapsto 6$, $7 \mapsto 5$ \\\hline

\end{tabular}
}
\caption{Frankl's conjecture holds for all UC families which contain the following subfamilies}
\end{center}
\end{table}

\begin{table}[h!]
\begin{center}
\scalebox{0.85}{
\begin{tabular}{|l|l|}
\hline
\multicolumn{2}{|c|}{Previously unknown minimal nonisomorphic generators for FC-families on $[7]$}\\\hline
\hline
\pbox{20cm}{23456, 12357, 13457, 23457, 12467, \\ 13467, 23467, 12567, 13567, 23567, \\ 14567, 24567, 34567} & \scriptsize{ $ 1  \mapsto 7$, $ 2  \mapsto 10$, $ 3  \mapsto 10$, $ 4  \mapsto 10$, $ 5  \mapsto 11$, $ 6  \mapsto 11$, $ 7  \mapsto 12$} \\\hline
 \pbox{20cm}{12456, 12357, 13457, 23457, 12467, \\ 13467, 23467, 12567, 13567, 23567, \\ 14567, 24567, 34567} & \scriptsize{ $ 1  \mapsto 9$, $ 2  \mapsto 9$, $ 3  \mapsto 8$, $ 4  \mapsto 9$, $ 5  \mapsto 10$, $ 6  \mapsto 10$, $ 7  \mapsto 11$} \\\hline 
\pbox{20cm}{12346, 12357, 13457, 23457, 12467, \\ 13467, 23467, 12567, 13567, 23567, \\ 14567, 24567, 34567} & \scriptsize{ $ 1  \mapsto 12$, $ 2  \mapsto 12$, $ 3  \mapsto 13$, $ 4  \mapsto 13$, $ 5  \mapsto 12$, $ 6  \mapsto 13$, $ 7  \mapsto 15$} \\\hline 
\pbox{20cm}{13456, 23456, 13457, 23457, 12467, \\ 13467, 23467, 12567, 13567, 23567, \\ 14567, 24567, 34567} & $ 1  \mapsto 6$, $ 2  \mapsto 6$, $ 3  \mapsto 6$, $ 4  \mapsto 7$, $ 5  \mapsto 7$, $ 6  \mapsto 7$, $ 7  \mapsto 7$ \\\hline
 
\pbox{20cm}{12456, 23456, 13457, 23457, 12467, \\ 13467, 23467, 12567, 13567, 23567, \\ 14567, 24567, 34567} & $ 1  \mapsto 6$, $ 2  \mapsto 6$, $ 3  \mapsto 6$, $ 4  \mapsto 7$, $ 5  \mapsto 7$, $ 6  \mapsto 7$, $ 7  \mapsto 7$ \\\hline 
\pbox{20cm}{12356, 23456, 13457, 23457, 12467, \\ 13467, 23467, 12567, 13567, 23567, \\ 14567, 24567, 34567} & $ 1  \mapsto 7$, $ 2  \mapsto 7$, $ 3  \mapsto 8$, $ 4  \mapsto 8$, $ 5  \mapsto 8$, $ 6  \mapsto 9$, $ 7  \mapsto 9$ \\\hline 
\pbox{20cm}{12345, 23456, 13457, 23457, 12467, \\ 13467, 23467, 12567, 13567, 23567, \\ 14567, 24567, 34567} & $ 1  \mapsto 7$, $ 2  \mapsto 8$, $ 3  \mapsto 8$, $ 4  \mapsto 9$, $ 5  \mapsto 9$, $ 6  \mapsto 9$, $ 7  \mapsto 9$ \\\hline 
\pbox{20cm}{12356, 12456, 13457, 23457, 12467, \\ 13467, 23467, 12567, 13567, 23567, \\ 14567, 24567, 34567} & $ 1  \mapsto 6$, $ 2  \mapsto 6$, $ 3  \mapsto 6$, $ 4  \mapsto 6$, $ 5  \mapsto 6$, $ 6  \mapsto 7$, $ 7  \mapsto 7$ \\\hline 
\pbox{20cm}{12345, 12456, 13457, 23457, 12467, \\ 13467, 23467, 12567, 13567, 23567, \\ 14567, 24567, 34567} & $ 1  \mapsto 6$, $ 2  \mapsto 6$, $ 3  \mapsto 6$, $ 4  \mapsto 7$, $ 5  \mapsto 7$, $ 6  \mapsto 7$, $ 7  \mapsto 7$ \\\hline 
\pbox{20cm}{12346, 12356, 13457, 23457, 12467, \\ 13467, 23467, 12567, 13567, 23567, \\ 14567, 24567, 34567} & $ 1  \mapsto 6$, $ 2  \mapsto 6$, $ 3  \mapsto 6$, $ 4  \mapsto 6$, $ 5  \mapsto 6$, $ 6  \mapsto 7$, $ 7  \mapsto 7$ \\\hline 
\pbox{20cm}{12345, 12356, 13457, 23457, 12467, \\ 13467, 23467, 12567, 13567, 23567, \\ 14567, 24567, 34567} & \scriptsize{ $ 1  \mapsto 9$, $ 2  \mapsto 9$, $ 3  \mapsto 9$, $ 4  \mapsto 9$, $ 5  \mapsto 10$, $ 6  \mapsto 10$, $ 7  \mapsto 10$} \\\hline 
\pbox{20cm}{23456, 12347, 12357, 23457, 12467, \\ 13467, 23467, 12567, 13567, 23567, \\ 14567, 24567, 34567} & $ 1  \mapsto 5$, $ 2  \mapsto 7$, $ 3  \mapsto 7$, $ 4  \mapsto 7$, $ 5  \mapsto 7$, $ 6  \mapsto 7$, $ 7  \mapsto 8$ \\\hline 
\pbox{20cm}{13456, 12347, 12357, 23457, 12467, \\ 13467, 23467, 12567, 13567, 23567, \\ 14567, 24567, 34567} & $ 1  \mapsto 6$, $ 2  \mapsto 6$, $ 3  \mapsto 7$, $ 4  \mapsto 7$, $ 5  \mapsto 7$, $ 6  \mapsto 7$, $ 7  \mapsto 8$ \\\hline 
\pbox{20cm}{12356, 12347, 12357, 23457, 12467, \\ 13467, 23467, 12567, 13567, 23567, \\ 14567, 24567, 34567} & $ 1  \mapsto 6$, $ 2  \mapsto 7$, $ 3  \mapsto 7$, $ 4  \mapsto 6$, $ 5  \mapsto 7$, $ 6  \mapsto 7$, $ 7  \mapsto 8$ \\\hline 
\pbox{20cm}{12345, 12347, 12357, 23457, 12467, \\ 13467, 23467, 12567, 13567, 23567, \\ 14567, 24567, 34567} & \scriptsize{ $ 1  \mapsto 11$, $ 2  \mapsto 12$, $ 3  \mapsto 12$, $ 4  \mapsto 12$, $ 5  \mapsto 12$, $ 6  \mapsto 11$, $ 7  \mapsto 14$} \\\hline
\end{tabular}
}
\caption{Frankl's conjecture holds for all UC families which contain the following subfamilies}
\end{center}
\end{table}

\begin{table}[h!]
\begin{center}
\scalebox{0.85}{
\begin{tabular}{|l|l|}
\hline
\multicolumn{2}{|c|}{Previously unknown minimal nonisomorphic generators for FC-families on $[7]$}\\\hline
\hline
 
\pbox{20cm}{34567, 24567, 23567, 23467, 23457, \\ 23456, 14567, 13567, 13467, 13456, \\ 12457, 12367, 12347} & $ 1  \mapsto 7$, $ 2  \mapsto 8$, $ 3  \mapsto 9$, $ 4  \mapsto 9$, $ 5  \mapsto 8$, $ 6  \mapsto 9$, $ 7  \mapsto 9$ \\\hline 
\pbox{20cm}{34567, 24567, 23567, 23467, 23457, \\ 23456, 14567, 13567, 13467, 13456, \\ 12457, 12367, 12346} & $ 1  \mapsto 7$, $ 2  \mapsto 8$, $ 3  \mapsto 9$, $ 4  \mapsto 9$, $ 5  \mapsto 8$, $ 6  \mapsto 9$, $ 7  \mapsto 9$ \\\hline 
\pbox{20cm}{34567, 24567, 23567, 23467, 23457, \\ 23456, 14567, 13567, 13467, 13456, \\ 12457, 12367, 12345} & $ 1  \mapsto 3$, $ 2  \mapsto 3$, $ 3  \mapsto 4$, $ 4  \mapsto 4$, $ 5  \mapsto 4$, $ 6  \mapsto 4$, $ 7  \mapsto 4$ \\\hline 
\pbox{20cm}{34567, 24567, 23567, 23467, 23457, \\ 23456, 14567, 13567, 13467, 13456, \\ 12457, 12357, 12347} & $ 1  \mapsto 7$, $ 2  \mapsto 8$, $ 3  \mapsto 9$, $ 4  \mapsto 9$, $ 5  \mapsto 9$, $ 6  \mapsto 8$, $ 7  \mapsto 9$ \\\hline 
\pbox{20cm}{34567, 24567, 23567, 23467, 23457, \\ 23456, 14567, 13567, 13467, 13456, \\ 12457, 12357, 12346} & $ 1  \mapsto 3$, $ 2  \mapsto 3$, $ 3  \mapsto 4$, $ 4  \mapsto 4$, $ 5  \mapsto 4$, $ 6  \mapsto 4$, $ 7  \mapsto 4$ \\\hline 
\pbox{20cm}{34567, 24567, 23567, 23467, 23457, \\ 23456, 14567, 13567, 13467, 13456, \\ 12457, 12357, 12345} & $ 1  \mapsto 7$, $ 2  \mapsto 8$, $ 3  \mapsto 9$, $ 4  \mapsto 9$, $ 5  \mapsto 9$, $ 6  \mapsto 8$, $ 7  \mapsto 9$ \\\hline 
\pbox{20cm}{34567, 24567, 23567, 23467, 23457, \\ 23456, 14567, 13567, 13467, 13456, \\ 12457, 12347, 12346} & $ 1  \mapsto 7$, $ 2  \mapsto 8$, $ 3  \mapsto 9$, $ 4  \mapsto 9$, $ 5  \mapsto 8$, $ 6  \mapsto 9$, $ 7  \mapsto 9$ \\\hline 
\pbox{20cm}{34567, 24567, 23567, 23467, 23457, \\ 23456, 14567, 13567, 13467, 13456, \\ 12457, 12347, 12345} & $ 1  \mapsto 7$, $ 2  \mapsto 8$, $ 3  \mapsto 9$, $ 4  \mapsto 9$, $ 5  \mapsto 9$, $ 6  \mapsto 8$, $ 7  \mapsto 9$ \\\hline 
\pbox{20cm}{34567, 24567, 23567, 23467, 23457, \\ 23456, 14567, 13567, 13467, 13456, \\ 12457, 12346, 12345} & $ 1  \mapsto 7$, $ 2  \mapsto 8$, $ 3  \mapsto 9$, $ 4  \mapsto 9$, $ 5  \mapsto 9$, $ 6  \mapsto 9$, $ 7  \mapsto 8$ \\\hline 
\pbox{20cm}{34567, 24567, 23567, 23467, 23457, \\ 23456, 14567, 13567, 13467, 13456, \\ 12347, 12346, 12345} & $ 1  \mapsto 7$, $ 2  \mapsto 7$, $ 3  \mapsto 9$, $ 4  \mapsto 9$, $ 5  \mapsto 8$, $ 6  \mapsto 8$, $ 7  \mapsto 8$ \\\hline 
\pbox{20cm}{34567, 24567, 23567, 23467, 23457, \\ 23456, 14567, 13567, 13467, 12457, \\ 12456, 12347, 12346} & $ 1  \mapsto 8$, $ 2  \mapsto 9$, $ 3  \mapsto 9$, $ 4  \mapsto 10$, $ 5  \mapsto 9$, $ 6  \mapsto 9$, $ 7  \mapsto 9$ \\\hline 
\pbox{20cm}{34567, 24567, 23567, 23467, 23457, \\ 23456, 14567, 13567, 13467, 12457, \\ 12456, 12347, 12345} & $ 1  \mapsto 8$, $ 2  \mapsto 9$, $ 3  \mapsto 9$, $ 4  \mapsto 10$, $ 5  \mapsto 9$, $ 6  \mapsto 9$, $ 7  \mapsto 9$ \\\hline 
\pbox{20cm}{34567, 24567, 23567, 23467, 23457, \\ 23456, 14567, 13567, 13456, 12567, \\ 12456, 12367, 12357} & $ 1  \mapsto 7$, $ 2  \mapsto 8$, $ 3  \mapsto 8$, $ 4  \mapsto 7$, $ 5  \mapsto 9$, $ 6  \mapsto 9$, $ 7  \mapsto 8$ \\\hline 
\pbox{20cm}{34567, 24567, 23567, 23467, 23457, \\ 23456, 14567, 13567, 13456, 12567, \\ 12456, 12367, 12356} & $ 1  \mapsto 7$, $ 2  \mapsto 9$, $ 3  \mapsto 9$, $ 4  \mapsto 8$, $ 5  \mapsto 10$, $ 6  \mapsto 10$, $ 7  \mapsto 9$ \\\hline 
\pbox{20cm}{34567, 24567, 23567, 23467, 23457, \\ 23456, 14567, 13567, 13456, 12567, \\ 12456, 12356, 12347} & $ 1  \mapsto 6$, $ 2  \mapsto 7$, $ 3  \mapsto 7$, $ 4  \mapsto 7$, $ 5  \mapsto 8$, $ 6  \mapsto 8$, $ 7  \mapsto 7$ \\\hline
\end{tabular}
}
\caption{Frankl's conjecture holds for all UC families which contain the following subfamilies}
\end{center}
\end{table}

\begin{table}[h!]
\begin{center}
\scalebox{0.85}{
\begin{tabular}{|l|l|}
\hline
\multicolumn{2}{|c|}{Previously-unknown minimal nonisomorphic generators for FC-families on $[8]$}\\\hline
\hline
\pbox{20cm}{678, 578, 346, 125} & $ 1  \mapsto 1$, $ 2  \mapsto 1$, $ 3  \mapsto 1$, $ 4  \mapsto 1$, $ 5  \mapsto 2$, $ 6  \mapsto 2$, $ 7  \mapsto 2$, $ 8  \mapsto 2$ \\\hline
\pbox{20cm}{678, 458, 237, 135} & $ 1  \mapsto 1$, $ 2  \mapsto 1$, $ 3  \mapsto 2$, $ 4  \mapsto 1$, $ 5  \mapsto 2$, $ 6  \mapsto 1$, $ 7  \mapsto 2$, $ 8  \mapsto 2$ \\\hline 
\pbox{20cm}{1578, 678, 458, 237} & $ 1  \mapsto 1$, $ 2  \mapsto 3$, $ 3  \mapsto 3$, $ 4  \mapsto 3$, $ 5  \mapsto 4$, $ 6  \mapsto 4$, $ 7  \mapsto 6$, $ 8  \mapsto 6$ \\\hline 
\pbox{20cm}{1567, 678, 458, 237} & $ 1  \mapsto 1$, $ 2  \mapsto 2$, $ 3  \mapsto 2$, $ 4  \mapsto 2$, $ 5  \mapsto 3$, $ 6  \mapsto 3$, $ 7  \mapsto 4$, $ 8  \mapsto 4$ \\\hline 
\pbox{20cm}{1457, 678, 458, 237} & $ 1  \mapsto 1$, $ 2  \mapsto 1$, $ 3  \mapsto 1$, $ 4  \mapsto 2$, $ 5  \mapsto 2$, $ 6  \mapsto 2$, $ 7  \mapsto 3$, $ 8  \mapsto 3$ \\\hline 
\pbox{20cm}{45678, 1246, 678, 578, 346} & $ 1  \mapsto 2$, $ 2  \mapsto 2$, $ 3  \mapsto 4$, $ 4  \mapsto 5$, $ 5  \mapsto 3$, $ 6  \mapsto 7$, $ 7  \mapsto 5$, $ 8  \mapsto 5$ \\\hline 
\pbox{20cm}{35678, 2357, 678, 458, 123} & \scriptsize{ $ 1  \mapsto 18$, $ 2  \mapsto 25$, $ 3  \mapsto 30$, $ 4  \mapsto 28$, $ 5  \mapsto 40$, $ 6  \mapsto 27$, $ 7  \mapsto 37$, $ 8  \mapsto 44$} \\\hline 
\pbox{20cm}{35678, 1345, 678, 458, 237} & $ 1  \mapsto 2$, $ 2  \mapsto 3$, $ 3  \mapsto 5$, $ 4  \mapsto 4$, $ 5  \mapsto 5$, $ 6  \mapsto 4$, $ 7  \mapsto 6$, $ 8  \mapsto 6$ \\\hline
\pbox{20cm}{35678, 1246, 678, 578, 346} & $ 1  \mapsto 2$, $ 2  \mapsto 2$, $ 3  \mapsto 5$, $ 4  \mapsto 5$, $ 5  \mapsto 4$, $ 6  \mapsto 8$, $ 7  \mapsto 6$, $ 8  \mapsto 6$ \\\hline 
\pbox{20cm}{34678, 2357, 678, 458, 123} & \scriptsize{ $ 1  \mapsto 8$, $ 2  \mapsto 12$, $ 3  \mapsto 15$, $ 4  \mapsto 16$, $ 5  \mapsto 19$, $ 6  \mapsto 13$, $ 7  \mapsto 18$, $ 8  \mapsto 22$} \\\hline 
\pbox{20cm}{34578, 1345, 678, 458, 237} & $ 1  \mapsto 2$, $ 2  \mapsto 5$, $ 3  \mapsto 7$, $ 4  \mapsto 5$, $ 5  \mapsto 5$, $ 6  \mapsto 5$, $ 7  \mapsto 9$, $ 8  \mapsto 8$ \\\hline
\pbox{20cm}{34578, 1246, 678, 578, 346} & $ 1  \mapsto 2$, $ 2  \mapsto 2$, $ 3  \mapsto 4$, $ 4  \mapsto 5$, $ 5  \mapsto 3$, $ 6  \mapsto 7$, $ 7  \mapsto 5$, $ 8  \mapsto 5$ \\\hline 
\pbox{20cm}{34568, 1345, 678, 458, 237} & $ 1  \mapsto 2$, $ 2  \mapsto 4$, $ 3  \mapsto 6$, $ 4  \mapsto 5$, $ 5  \mapsto 5$, $ 6  \mapsto 6$, $ 7  \mapsto 8$, $ 8  \mapsto 8$ \\\hline
\pbox{20cm}{25678, 1345, 678, 458, 237} & $ 1  \mapsto 2$, $ 2  \mapsto 5$, $ 3  \mapsto 6$, $ 4  \mapsto 5$, $ 5  \mapsto 6$, $ 6  \mapsto 5$, $ 7  \mapsto 8$, $ 8  \mapsto 8$ \\\hline 
\pbox{20cm}{25678, 1246, 678, 578, 346} & \scriptsize{ $ 1  \mapsto 4$, $ 2  \mapsto 8$, $ 3  \mapsto 11$, $ 4  \mapsto 13$, $ 5  \mapsto 10$, $ 6  \mapsto 20$, $ 7  \mapsto 15$, $ 8  \mapsto 15$} \\\hline
\pbox{20cm}{24678, 1246, 678, 578, 346} & $ 1  \mapsto 2$, $ 2  \mapsto 2$, $ 3  \mapsto 4$, $ 4  \mapsto 5$, $ 5  \mapsto 3$, $ 6  \mapsto 7$, $ 7  \mapsto 5$, $ 8  \mapsto 5$ \\\hline 
\pbox{20cm}{24578, 1345, 678, 458, 237} & $ 1  \mapsto 2$, $ 2  \mapsto 7$, $ 3  \mapsto 7$, $ 4  \mapsto 6$, $ 5  \mapsto 6$, $ 6  \mapsto 7$, $ 7  \mapsto 11$, $ 8  \mapsto 10$ \\\hline
\pbox{20cm}{24578, 1246, 678, 578, 346} & $ 1  \mapsto 1$, $ 2  \mapsto 1$, $ 3  \mapsto 2$, $ 4  \mapsto 3$, $ 5  \mapsto 2$, $ 6  \mapsto 4$, $ 7  \mapsto 3$, $ 8  \mapsto 3$ \\\hline 
\pbox{20cm}{24568, 1345, 678, 458, 237} & $ 1  \mapsto 2$, $ 2  \mapsto 6$, $ 3  \mapsto 6$, $ 4  \mapsto 4$, $ 5  \mapsto 4$, $ 6  \mapsto 5$, $ 7  \mapsto 8$, $ 8  \mapsto 7$ \\\hline 
\pbox{20cm}{23678, 1246, 678, 578, 346} & $ 1  \mapsto 2$, $ 2  \mapsto 2$, $ 3  \mapsto 5$, $ 4  \mapsto 5$, $ 5  \mapsto 4$, $ 6  \mapsto 8$, $ 7  \mapsto 6$, $ 8  \mapsto 6$ \\\hline
 
\pbox{20cm}{23567, 1345, 678, 458, 237} & $ 1  \mapsto 2$, $ 2  \mapsto 4$, $ 3  \mapsto 5$, $ 4  \mapsto 4$, $ 5  \mapsto 5$, $ 6  \mapsto 5$, $ 7  \mapsto 7$, $ 8  \mapsto 7$ \\\hline
\pbox{20cm}{3456, 1458, 2378, 4678, \\ 2347, 2458} & $ 1  \mapsto 2$, $ 2  \mapsto 5$, $ 3  \mapsto 5$, $ 4  \mapsto 6$, $ 5  \mapsto 4$, $ 6  \mapsto 4$, $ 7  \mapsto 5$, $ 8  \mapsto 6$ \\\hline
\pbox{20cm}{2356, 1568, 3468, 2478, \\ 1268, 1248} & $ 1  \mapsto 7$, $ 2  \mapsto 9$, $ 3  \mapsto 6$, $ 4  \mapsto 7$, $ 5  \mapsto 5$, $ 6  \mapsto 9$, $ 7  \mapsto 3$, $ 8  \mapsto 10$ \\\hline
\pbox{20cm}{1357, 1356, 1348, 1346, \\ 1345, 1278, 1268} & \scriptsize{ $ 1  \mapsto 54$, $ 2  \mapsto 26$, $ 3  \mapsto 42$, $ 4  \mapsto 31$, $ 5  \mapsto 30$, $ 6  \mapsto 38$, $ 7  \mapsto 31$, $ 8  \mapsto 36$} \\\hline
\pbox{20cm}{1346, 1345, 1278, 1268, \\ 1267, 1258, 1257, 1256} & $ 1  \mapsto 7$, $ 2  \mapsto 6$, $ 3  \mapsto 2$, $ 4  \mapsto 2$, $ 5  \mapsto 5$, $ 6  \mapsto 5$, $ 7  \mapsto 4$, $ 8  \mapsto 4$ \\\hline
\pbox{20cm}{345678, 245678, 235678, 234678, \\ 234578, 234568, 234567, 145678, \\ 135678, 134678, 134578, 134568, \\ 134567, 125678, 124678, 124578, \\ 124568, 124567, 123678, 123578, \\ 123568, 123567, 123478, 123468,\\ 123467, 123456} & \scriptsize{ $ 1  \mapsto 28$, $ 2  \mapsto 28$, $ 3  \mapsto 28$, $ 4  \mapsto 28$, $ 5  \mapsto 28$, $ 6  \mapsto 30$, $ 7  \mapsto 29$, $ 8  \mapsto 29$} \\\hline
\pbox{20cm}{345678, 245678, 235678, 234678, \\ 234578, 234568, 234567, 145678, \\ 135678, 134678, 134578, 134568, \\ 134567, 125678, 124678, 124578, \\ 124568, 124567, 123678, 123578, \\ 123568, 123567, 123478, 123468, \\ 123457, 123456} & \scriptsize{ $ 1  \mapsto 27$, $ 2  \mapsto 27$, $ 3  \mapsto 27$, $ 4  \mapsto 27$, $ 5  \mapsto 28$, $ 6  \mapsto 28$, $ 7  \mapsto 28$, $ 8  \mapsto 28$} \\\hline
\end{tabular}
}
\caption{Frankl's conjecture holds for all UC families which contain the following subfamilies}
\end{center}
\end{table}

\begin{table}[h!]
\begin{center}
\scalebox{0.85}{
\begin{tabular}{|l|l|}
\hline
\multicolumn{2}{|c|}{Previously-unknown minimal nonisomorphic generators for FC-families on $[9]$}\\\hline
\hline
\pbox{20cm}{369, 789, 456, 123} & $ 1  \mapsto 1$, $ 2  \mapsto 1$, $ 3  \mapsto 2$, $ 4  \mapsto 1$, $ 5  \mapsto 1$, $ 6  \mapsto 2$, $ 7  \mapsto 1$, $ 8  \mapsto 1$, $ 9  \mapsto 2$ \\\hline
 
\pbox{20cm}{348, 569, 789, 1268} & \scriptsize{ $ 1  \mapsto 4$, $ 2  \mapsto 4$, $ 3  \mapsto 8$, $ 4  \mapsto 8$, $ 5  \mapsto 9$, $ 6  \mapsto 11$, $ 7  \mapsto 11$, $ 8  \mapsto 17$, $ 9  \mapsto 16$} \\\hline
\pbox{20cm}{148, 159, 6789, 2345} & $ 1  \mapsto 4$, $ 2  \mapsto 1$, $ 3  \mapsto 1$, $ 4  \mapsto 3$, $ 5  \mapsto 3$, $ 6  \mapsto 1$, $ 7  \mapsto 1$, $ 8  \mapsto 3$, $ 9  \mapsto 3$ \\\hline
\pbox{20cm}{589, 129, 6789, 3459} & \scriptsize{ $ 1  \mapsto 18$, $ 2  \mapsto 18$, $ 3  \mapsto 10$, $ 4  \mapsto 10$, $ 5  \mapsto 25$, $ 6  \mapsto 10$, $ 7  \mapsto 10$, $ 8  \mapsto 25$, $ 9  \mapsto 36$} \\\hline
\pbox{20cm}{489, 159, 2345, 5679} & \scriptsize{ $ 1  \mapsto 20$, $ 2  \mapsto 8$, $ 3  \mapsto 8$, $ 4  \mapsto 23$, $ 5  \mapsto 28$, $ 6  \mapsto 10$, $ 7  \mapsto 10$, $ 8  \mapsto 19$, $ 9  \mapsto 34$} \\\hline
\pbox{20cm}{5689, 578, 129, 6789, 3459} & $ 1  \mapsto 3$, $ 2  \mapsto 3$, $ 3  \mapsto 2$, $ 4  \mapsto 2$, $ 5  \mapsto 6$, $ 6  \mapsto 3$, $ 7  \mapsto 5$, $ 8  \mapsto 6$, $ 9  \mapsto 6$ \\\hline
\pbox{20cm}{5679, 128, 129, 6789, 3459} & \scriptsize{ $ 1  \mapsto 16$, $ 2  \mapsto 16$, $ 3  \mapsto 3$, $ 4  \mapsto 3$, $ 5  \mapsto 6$, $ 6  \mapsto 7$, $ 7  \mapsto 7$, $ 8  \mapsto 15$, $ 9  \mapsto 17$} \\\hline
\pbox{20cm}{5678, 278, 129, 6789, 3459} & \scriptsize{ $ 1  \mapsto 52$, $ 2  \mapsto 81$, $ 3  \mapsto 16$, $ 4  \mapsto 16$, $ 5  \mapsto 32$, $ 6  \mapsto 35$, $ 7  \mapsto 58$, $ 8  \mapsto 58$, $ 9  \mapsto 75$} \\\hline
\pbox{20cm}{4789, 578, 129, 6789, 3459} & \scriptsize{ $ 1  \mapsto 49$, $ 2  \mapsto 49$, $ 3  \mapsto 34$, $ 4  \mapsto 60$, $ 5  \mapsto 80$, $ 6  \mapsto 36$, $ 7  \mapsto 79$, $ 8  \mapsto 79$, $ 9  \mapsto 98$} \\\hline
\pbox{20cm}{4789, 489, 159, 6789, 2345} & \scriptsize{ $ 1  \mapsto 20$, $ 2  \mapsto 8$, $ 3  \mapsto 8$, $ 4  \mapsto 26$, $ 5  \mapsto 24$, $ 6  \mapsto 11$, $ 7  \mapsto 17$, $ 8  \mapsto 26$, $ 9  \mapsto 36$} \\\hline
\pbox{20cm}{4689, 578, 129, 6789, 3459} & \scriptsize{ $ 1  \mapsto 17$, $ 2  \mapsto 17$, $ 3  \mapsto 12$, $ 4  \mapsto 21$, $ 5  \mapsto 30$, $ 6  \mapsto 19$, $ 7  \mapsto 28$, $ 8  \mapsto 33$, $ 9  \mapsto 35$} \\\hline
\pbox{20cm}{4689, 478, 159, 6789, 2345} & \scriptsize{ $ 1  \mapsto 12$, $ 2  \mapsto 6$, $ 3  \mapsto 6$, $ 4  \mapsto 24$, $ 5  \mapsto 15$, $ 6  \mapsto 14$, $ 7  \mapsto 21$, $ 8  \mapsto 25$, $ 9  \mapsto 21$} \\\hline
\pbox{20cm}{4679, 158, 159, 6789, 2345} & \scriptsize{ $ 1  \mapsto 18$, $ 2  \mapsto 3$, $ 3  \mapsto 3$, $ 4  \mapsto 6$, $ 5  \mapsto 19$, $ 6  \mapsto 7$, $ 7  \mapsto 7$, $ 8  \mapsto 16$, $ 9  \mapsto 17$} \\\hline
\pbox{20cm}{4678, 578, 159, 6789, 2345} & \scriptsize{ $ 1  \mapsto 9$, $ 2  \mapsto 3$, $ 3  \mapsto 3$, $ 4  \mapsto 6$, $ 5  \mapsto 16$, $ 6  \mapsto 6$, $ 7  \mapsto 11$, $ 8  \mapsto 11$, $ 9  \mapsto 12$} \\\hline
\pbox{20cm}{4589, 589, 159, 6789, 2345} & $ 1  \mapsto 5$, $ 2  \mapsto 2$, $ 3  \mapsto 2$, $ 4  \mapsto 4$, $ 5  \mapsto 8$, $ 6  \mapsto 2$, $ 7  \mapsto 2$, $ 8  \mapsto 6$, $ 9  \mapsto 8$ \\\hline
\end{tabular}
}
\caption{Frankl's conjecture holds for all UC families which contain the following subfamilies}
\end{center}
\end{table}

\begin{table}[h!]
\begin{center}
\begin{tabular}{|l|l|}
\hline
\multicolumn{2}{|c|}{Previously-unknown minimal nonisomorphic generators for FC-families on $[10]$}\\\hline
\hline
\pbox{20cm}{123, 124, 356, 678, 79(10)} & \scriptsize{$ 1  \mapsto 6$, $ 2  \mapsto 6$, $ 3  \mapsto 8$, $ 4  \mapsto 4$, $ 5  \mapsto 5$, $ 6  \mapsto 7$, $ 7  \mapsto 5$, $ 8  \mapsto 4$, $ 9  \mapsto 2$, $ 10  \mapsto 2$} \\\hline
\pbox{20cm}{123, 124, 356, 678, 3489(10)} & \scriptsize{$ 1  \mapsto 7$, $ 2  \mapsto 7$, $ 3  \mapsto 5$, $ 4  \mapsto 5$, $ 5  \mapsto 5$, $ 6  \mapsto 6$, $ 7  \mapsto 3$, $ 8  \mapsto 3$, $ 9  \mapsto 1$, $ 10  \mapsto 1$} \\\hline
\end{tabular}
\caption{Frankl's conjecture holds for all UC families which contain the following subfamilies}
\end{center}
\end{table}

\end{document}